\newcommand{\cmark}{\ding{51}}
\theoremstyle{plain}
\newtheorem{teo}{Theorem}[section]
\renewcommand{\d}{\operatorname{d}}
\newcommand{\Exp}[1]{\operatorname{e}^{#1}}
\newcommand{\C}{\mathbb{C}}
\newcommand{\B}{\mathbb{B}}
\newcommand{\N}{\mathbb{N}}
\DeclareRobustCommand{\gaussk}{\DOTSB\gaussk@\slimits@}
\newcommand{\gaussk@}{\mathop{\vphantom{\sum}\mathpalette\bigcal@{K}}}
\newcommand{\bigcal@}[2]{%
	\vcenter{\m@th
		\sbox\z@{$#1\sum$}%
		\dimen@=\dimexpr\ht\z@+\dp\z@
		\hbox{\resizebox{!}{0.8\dimen@}{$\mathcal{K}$}}%
	}%
}
\newcommand{\cfracplus}{\mathbin{\cfracplus@}}
\newcommand{\cfracplus@}{%
	\sbox\z@{$\dfrac{1}{1}$}%
	\sbox\tw@{$+$}%
	\raisebox{\dimexpr\dp\tw@-\dp\z@\relax}{$+$}%
}
\newcommand{\cfracdots}{\mathord{\cfracdots@}}
\newcommand{\cfracdots@}{%
	\sbox\z@{$\dfrac{1}{1}$}%
	\sbox\tw@{$+$}%
	\raisebox{\dimexpr\dp\tw@-\dp\z@\relax}{$\cdots$}%
}
\newcommand*{\relrelbarsep}{.386ex}
\newcommand*{\relrelbar}{%
	\mathrel{%
		\mathpalette\@relrelbar\relrelbarsep
	}%
}
\newcommand*{\@relrelbar}[2]{%
	\raise#2\hbox to 0pt{$\m@th#1\relbar$\hss}%
	\lower#2\hbox{$\m@th#1\relbar$}%
}
\providecommand*{\rightrightarrowsfill@}{%
	\arrowfill@\relrelbar\relrelbar\rightrightarrows
}
\providecommand*{\leftleftarrowsfill@}{%
	\arrowfill@\leftleftarrows\relrelbar\relrelbar
}
\providecommand*{\xrightrightarrows}[2][]{%
	\ext@arrow 0359\rightrightarrowsfill@{#1}{#2}%
}
\providecommand*{\xleftleftarrows}[2][]{%
	\ext@arrow 3095\leftleftarrowsfill@{#1}{#2}%
}
\newcommand*\pFqskip{8mu}
\newcommand*\pFq{\begingroup
 \catcode`\,\active
 \def ,{\mskip\pFqskip\relax}%
 \dopFq
}
\def\dopFq#1#2#3#4#5{%
 {}_{#1}F_{#2}\biggl[\genfrac..{0pt}{}{#3}{#4};#5\biggr]%
 \endgroup
}
\newcommand{\KF}[5]{F^{#1}_{#2}\left[{#3\atop #4}\Bigg\vert #5\right]}
\tikzstyle{block} = [draw, rectangle, 
\newtheorem{defi}{Definición}[section]
\newtheorem{lema}[defi]{Lemma}
\newtheorem{cor}[defi]{Corollary}
\theoremstyle{remark}
\newtheorem{rem}[defi]{Remark}
\begin{document}
	\title[Classical multiple orthogonal polynomials with arbitrary number of weights]{Classical multiple orthogonal polynomials for arbitrary number of weights and their explicit representation}
\author[A Branquinho]{Amílcar Branquinho$^{1}$}
\address{$^1$CMUC, Departamento de Matemática,
	Universidade de Coimbra, 3001-454 Coimbra, Portugal}
\email{$^1$ajplb@mat.uc.pt}

\author[JEF Díaz]{Juan EF Díaz$^{2}$}
\address{$^2$CIDMA, Departamento de Matemática, Universidade de Aveiro, 3810-193 Aveiro, Portugal}
\email{$^2$juan.enri@ua.pt}

\author[A Foulquié]{Ana Foulquié-Moreno$^{3}$}
\address{$^3$CIDMA, Departamento de Matemática, Universidade de Aveiro, 3810-193 Aveiro, Portugal}
\email{$^3$foulquie@ua.pt}

\author[M Mañas]{Manuel Mañas$^{4}$}
\address{$^4$Departamento de Física Teórica, Universidad Complutense de Madrid, Plaza Ciencias 1, 28040-Madrid, Spain 
}
\email{$^4$manuel.manas@ucm.es}

\keywords{Multiple orthogonal polynomials, hypergeometric series, Kampé de Fériet series, Jacobi--Piñeiro, Laguerre, Hermite, recurrence relations, AT systems}

\subjclass{42C05, 33C45, 33C47, 47B39, 47B36}

\begin{abstract}
This paper delves into classical multiple orthogonal polynomials with an arbitrary number of weights, including Jacobi–Piñeiro, Laguerre of both first and second kinds, as well as multiple orthogonal Hermite polynomials. Novel explicit expressions for nearest-neighbor recurrence coefficients, as well as the step line case, are provided for all these polynomial families. Furthermore, new explicit expressions for type I multiple orthogonal polynomials are derived for Laguerre of the second kind and also for multiple Hermite polynomials.
\end{abstract}

\maketitle

\tableofcontents

\section{Introduction}
Multiple orthogonal polynomials constitute a class of polynomials with widespread applications across various branches of mathematics and engineering. Unlike orthogonal polynomials, which are associated with a single weight function, multiple orthogonal polynomials are linked to multiple weight functions and measures concurrently. These polynomials serve as fundamental tools in numerical analysis, approximation theory, and mathematical physics, offering robust solutions for intricate problems involving simultaneous orthogonalities.

For an in-depth understanding of the subject, one can refer to the comprehensive introduction provided in the book by Ismail \cite{Ismail}, while 
its
relation with integrable systems is elaborated upon in \cite{afm}.

Recent research has illuminated the importance of multiple orthogonal polynomials in the Favard spectral description of banded bounded semi-infinite matrices. This connection has been explored in various works such as \cite{aim, phys-scrip, Contemporary}, with further insights available in \cite{laa}. Moreover, these polynomials have been found to play a crucial role in the context of Markov chains and random walks beyond birth and death, as evidenced in \cite{CRM,finite,hypergeometric,JP}. Notably, in both of these scenarios, type I polynomials emerge as central components. Unfortunately, explicit expressions for type I multiple orthogonal polynomials remain scarce. In contrast, for type II polynomials, Rodrigues' formula yields explicit hypergeometric expressions capable of accommodating an arbitrary number of `classical' weights, as elaborated in \cite{AskeyII} and \cite{ContinuosII,Arvesu,Clasicos}, see also \cite[\S 23]{Ismail}. 

Moreover, the nearest-neighbor recurrence coefficients \eqref{NextNeighbourRecurrence} have only been fully explored in the case of $p=2$, as detailed in \cite{Clasicos}. Recent advancements, such as those in \cite{HahnI}, have provided explicit expressions for various families, including the Jacobi--Piñeiro and Laguerre polynomials of type I, specifically for $p=2$. Subsequently, in \cite{bidiagonal}, coefficients of the bidiagonal factorization for these families were determined, also focusing on $p=2$.

Expanding upon this groundwork, we extended our investigation to encompass the general case of $p\geq2$. In \cite{JPpmedidas}, we derived explicit expressions for Jacobi--Piñeiro and Laguerre polynomials of the first kind, considering $p\geq2$. Continuing in this vein, the present work focuses on providing explicit expressions for:
\begin{enumerate}[\rm i)]
	\item Laguerre polynomials of the second kind, in \S \ref{S.L2KtypeI}, and Hermite polynomials of type I, see \S \ref{S.Hermite.TypeI}.
\item Recurrence coefficients \eqref{NextNeighbourRecurrence} for Jacobi--Piñeiro, see \S \ref{S.JP}, and Laguerre polynomials of both kinds, see \S \ref{S.L1k} and \ref{S.L2Krecurrence}, along with Hermite in \S \ref{S.Hermite.recurrence}.
\end{enumerate}
Next, we give a table as a resumé of these achievements and the adequate references: 
\begin{table}[h]
	\centering
	\begin{tabular}{|c|c|c|c|
		}
		\hline
		\textbf{Family} & \textbf{Type II} & \textbf{Type I} & \textbf{Recurrence} 
		\\ \hline\hline
		Jacobi--Piñeiro & {\color{green}\cmark} \cite{AskeyII} & {\color{green}\cmark} \cite{JPpmedidas}&
		{\color{green}\cmark} Here
		\\ \hline
		Laguerre First Kind& {\color{green}\cmark} \cite{AskeyII}&{\color{green}\cmark} \cite{JPpmedidas}&{\color{green}\cmark} Here
		\\ \hline
		Laguerre Second Kind & {\color{green}\cmark} \cite{AskeyII} &{\color{green}\cmark} Here &
		{\color{green}\cmark} Here
		\\ \hline
		Hermite & {\color{green}\cmark} \cite[\S 23.5]{Ismail} &{\color{green}\cmark} Here &
		{\color{green}\cmark} Here
		\\ \hline
	\end{tabular}
\end{table}

\begin{center}
	\begin{tikzpicture}[node distance=2.5cm]
		\node[fill=blue!15,block] (a) {Jacobi--Piñeiro}; 
		\node[fill=blue!15,block, right of = a, below of=a] (g) {Laguerre First Kind};
		\node[ fill=blue!15, block, left of =a, below of=a] (j) {Laguerre Second Kind};
		\node[fill=blue!15,block, below of = g, left of =g] (e) {Hermite};
		\draw[-latex] (a)--(e);
		\draw[-latex] (a)--(g);
		\draw[-latex] (g)--(e);
		\draw[-latex] (a)--(j);
	\end{tikzpicture} 
\end{center}

The following classical muliple ortohogonal polynomial families: Jacobi--Piñeiro, Laguerre of first and second kind and Hermite, are connected through limit relations as showed in the Askey scheme above, see~\cite{AskeyII}.


\subsection{Multiple Orthogonal Polynomials} (See \cite{Ismail, nikishin_sorokin}.)
Let's consider a system of $p\in\N$ weight functions $w_1,\dots,w_p:\Delta\subseteq\mathbb R\rightarrow\mathbb R^+$, a measure $\mu:\Delta\subseteq\mathbb R\rightarrow\mathbb R^+$ and a multi-index $\vec{n}=(n_1,\dots,n_p)\in\mathbb N^p_0$ with $|\vec{n}|\coloneq n_1+\dots+n_p$.

Let's examine a sequence of monic type II polynomials $B_{\vec{n}}$, where $\deg{B}\leq|\vec{n}|$, that fulfill the orthogonality relations:
\begin{align*}
 \int_{\Delta}^{}x^j B_{\vec{n}}(x)w_i(x)\d\mu(x)=0 ,
\end{align*}
for $i\in\{1,\dots,p \}$ and $j\in\{0,\dots,n_i-1\}$, and $p$ sequences of type I polynomials $A^{(1)}_{\vec{n}},\dots,A^{(p)}_{\vec{n}}$ with $\deg A^{(i)}_{\vec{n}}\leq n_i-1$ satisfying:
\begin{align}
\label{ortogonalidadTipoIContinua}
 \sum_{i=1}^p\int_{\Delta}^{}x^j A^{(i)}_{\vec{n}}(x)w_i(x)\d\mu(x)=
 \begin{cases}
 0\;\text{if}\; j=0,\dots,|\vec{n}|-2,\\
 1\;\text{if}\; j=|\vec{n}|-1.
 \end{cases}
\end{align}
These orhogonality conditions are equivalent to the biorthogonality conditions
\begin{align}
 \label{biorthogonality}
 \sum_{i=1}^p\int_{\Delta}B_{\vec{n}}(x)A^{(i)}_{\vec{m}}(x)w_i(x)\d\mu(x)=\begin{cases}
 0 \; \text{if}\;m_i\leq n_i,i\in\{1,\dots,p\},\\
 1 \;\text{if}\;|\vec{m}|=|\vec{n}|+1,\\
 0 \; \text{if}\;|\vec{n}|+1<|\vec{m}|.
 \end{cases}
\end{align}

For AT (algebraic Chebyshev) systems of weights, cf. \cite[\S 23.1.2]{Ismail,nikishin_sorokin}, the associated type II and I polynomials exist and reach the maximum degree.

\subsubsection{Near Neighbour Recurrence Relations}

Here we are partially following the notation of \cite[\S 23.1.4]{Ismail}.
Let be $\big(\pi(1),\pi(2),\dots,\pi(p)\big)$ a permutation of $(1,2,\dots,p)$, $\vec{e}_k\in\mathbb R^p$ the $k$-th vector of the canonical base in $\mathbb R^p$ and 
\begin{align*}
\vec{s}_0&\coloneq\vec{0},&\vec{s}_{j}&\begin{aligned}
	&\coloneq\sum_{i=1}^j \vec{e}_{\pi(i)},&j&\in\{1,\dots,p\}.
\end{aligned}
\end{align*}
Then, the type II and type I polynomials satisfy the following nearest-neighbor recurrence relations:
\begin{align}
\label{NextNeighbourRecurrence}
 \begin{aligned}
 	 x B_{\vec{n}}(x)&=B_{\vec{n}+\vec{e}_k}(x)+b_{\vec{n}}^0(k) B_{\vec{n}}(x)+\sum_{j=1}^p b^j_{\vec{n}}B_{\vec{n}-\vec{s}_j}(x),\\
 x A^{(i)}_{\vec{n}}(x)&=A^{(i)}_{\vec{n}-\vec{e}_k}(x)+b_{\vec{n}-\vec{e}_k}^0(k) A^{(i)}_{\vec{n}}(x)+\sum_{j=1}^p b^j_{\vec{n}+\vec{s}_{j-1}}A^{(i)}_{\vec{n}+\vec{s}_j}(x),& i&=\{1,\dots,p\}.
 \end{aligned}
\end{align}
From the biorthogonality, one gets that the recurrence coefficients can be written as:
\begin{align*}
 b_{\vec{n}}^0(k)&=\int_{\Delta} x B_{\vec{n}}(x)\left(A^{(1)}_{\vec{n}+\vec{e}_k}(x)w_1(x)+\cdots+A^{(p)}_{\vec{n}+\vec{e}_k}(x)w_p(x)\right)\d\mu(x),\\
 b_{\vec{n}}^j&=\int_{\Delta} x B_{\vec{n}}(x)\left(A^{(1)}_{\vec{n}-\vec{s}_{j-1}}(x)w_1(x)+\cdots+A^{(p)}_{\vec{n}-\vec{s}_{j-1}}(x)w_p(x)\right)\d\mu(x),& j&\in\{1,\dots,p\}.
\end{align*}

\begin{rem}
The multi-index $\vec{n}-\vec{s}_{j-1}$ corresponds 
with 
subtracting
$1$ to the $j-1$ different entries 
\begin{align*}
	 \{{n}_{\pi(1)},{n}_{\pi(2)},\dots,{n}_{\pi(j-1)}\}
\end{align*}
of the muli-index $\vec{n}$. Consequently,
\begin{align*}
 |\vec{n}+\vec{e}_k|&=|\vec{n}|+1, &
&\begin{aligned}
	 |\vec{n}-\vec{s}_{j-1}|&=|\vec{n}|-j+1,&j&=\{1,\dots,p\}.
\end{aligned}
\end{align*}
 The condition $\left(\vec{n}-\vec{s}_{j-1}\right)_i=n_i$ is equivalent to write $j\leq\pi^{-1}(i)$ while $\left(\vec{n}-\vec{s}_{j-1}\right)_i=n_i-1$ is equivalent to $j>\pi^{-1}(i)$. These conditions will be relevant later.
\end{rem}

We introduce the sets
\begin{align*}
	S(\pi,j)&\coloneq\left\{i\in\{1,\dots,p\}: j\leq \pi^{-1}(i)\right\}, &
	 S^{\textsf c}(\pi,j)&\coloneq \{1,2,\dots, p\}\setminus 	S(\pi,j).
\end{align*}
For example, for $p=4$ and the permutation
\begin{align*}
\pi=	\begin{pNiceMatrix}
		1 & 2& 3&4\\
		4&2&1 &3
	\end{pNiceMatrix}
\end{align*}
we have
\begin{align*}
	S(\pi,1)&=\{1,2,3,4\}, & 	S(\pi,2)&=\{1,2,3\}, &	S(\pi,3)&=\{1,3\}, &S(\pi,4)&=\{3\}.
\end{align*}

Type II and I polynomials can be written
\begin{align*}
 B_{\vec{n}}(x)&=\sum_{l_1=0}^{n_1}\cdots\sum_{l_p=0}^{n_p}C^{l_1,\dots,l_p}_{\vec{n}} x^{l_1+\cdots+l_p}, &
 A_{\vec{n}}^{(i)}(x)&=\sum_{l=0}^{n_i-1}C^{(i),l}_{\vec{n}} x^l,
\end{align*}
and biorthogonality conditions \eqref{biorthogonality} lead to
\begin{align}
\label{General_Coef0}
&\begin{aligned}
	 b_{\vec{n}}^0(k)&=\sum_{i=1}^p\sum_{l=0}^{n_i-1+\delta_{i,k}}C^{(i),l}_{\vec{n}+\vec{e}_k} \int_{\Delta} 
B_{\vec{n}}(x)x^{l+1}w_i(x)\d\mu(x)\\&
	=C^{(k),n_k}_{\vec{n}+\vec{e}_k}\int_{\Delta}B_{\vec{n}}(x)x^{n_k+1}w_k(x)\d\mu(x)+
	\sum_{i=1}^p C_{\vec{n}+\vec{e}_k}^{(i),n_i-1}\int_{\Delta} 
B_{\vec{n}}(x)x^{n_i}w_i(x)\d\mu(x)
\\
&=\begin{multlined}[t][.7\textwidth]C^{(k),n_k}_{\vec{n}+\vec{e}_k}\sum_{l_1=0}^{n_1}\cdots\sum_{l_p=0}^{n_p}C^{l_1,\dots,l_p}_{\vec{n}}\int_{\Delta}x^{n_k+1+l_1+\cdots+l_p}w_k(x)\d\mu(x)\\+\sum_{i=1}^p C_{\vec{n}+\vec{e}_k}^{(i),n_i-1} 
\sum_{l_1=0}^{n_1}\cdots\sum_{l_p=0}^{n_p}C^{l_1,\dots,l_p}_{\vec{n}}\int_{\Delta} x^{n_i+l_1+\cdots+l_p}w_i(x)\d\mu(x),
\end{multlined}
\end{aligned}\\
\label{General_Coefj} & \begin{aligned}
	b_{\vec{n}}^j&=\sum_{i=1}^p\sum_{l=0}^{\deg{A_{\vec{n}-\vec{s}_{j-1}}^{(i)}} }C^{(i),l}_{\vec{n}-\vec{s}_{j-1}} \int_{\Delta} B_{\vec{n}}(x)
x^{l+1}w_i(x)\d\mu(x)\\
&=\sum_{i\in S(\pi,j)} C^{(i),n_i-1}_{\vec{n}-\vec{s}_{j-1}} \int_{\Delta} B_{\vec{n}}(x)
x^{n_i}w_i(x)\d\mu(x)\\
&=\sum_{i\in S(\pi,j)} C^{(i),n_i-1}_{\vec{n}-\vec{s}_{j-1}}
\sum_{l_1=0}^{n_1}\cdots\sum_{l_p=0}^{n_p}C^{l_1,\dots,l_p}_{\vec{n}}\int_{\Delta}
x^{n_i+l_1+\cdots+l_p}w_i(x)\d\mu(x),& j&=\{1,\dots,p\},
\end{aligned}
\end{align}
since
\begin{equation*}
 \deg{A_{\vec{n}-\vec{s}_{j-1}}^{(i)}} =\begin{cases}
 n_i-2\;\text{if}\; \pi^{-1}(i)<j,\\
 n_i-1\;\text{if}\; j\leq\pi^{-1}(i).
 \end{cases}
\end{equation*}

\subsubsection{Step line Recurrence Relation}
Using the step line multi-index sequence
\begin{equation*}
 \{(0,0,\dots,0),(1,0,\dots,0),(1,1,\dots,0),\dots,(1,1,\dots,1),(2,1,\dots,1)\dots\}
\end{equation*}
we can relabel the polynomials and coefficients as follows
\begin{align*}
 B_{pm+k}&\coloneq B_{(\underbrace{m+1,\dots,m+1}_{k\,\text{times}},\underbrace{m\dots,m}_{p-k\,\text{times}})}\\
 A^{(i)}_{pm+k}&\coloneq A^{(i)}_{(\underbrace{m+1,\dots,m+1}_{k\,\text{times}},
 \underbrace{m,\dots,m}_{p-k\,\text{times}})} & i&\in\{1,\dots,p\} ,\\
 b_{pm+k}^0&\coloneq b^{0}_{(\underbrace{m+1,\dots,m+1}_{k\,\text{times}},\underbrace{m,\dots,m}_{p-k\,\text{times}})}(k+1)\\
 b_{pm+k}^j&\coloneq b^{j}_{(\underbrace{m+1,\dots,m+1}_{k\,\text{times}},\underbrace{m,\dots,m}_{p-k\,\text{times}})} & j&\in\{1,\dots,p\},
\end{align*}
with
$m\geq0$ and $k\in\{0,\dots,p-1\}$ . Then, the recurrence relations can be written as follows 
\begin{align}
\label{StepLineRecurrence}
 \begin{aligned}
 	 x B_{n}(x)&=B_{{n}+1}(x)+b_{{n}}^0 B_{{n}}(x)+\sum_{j=1}^p b^j_{{n}}B_{{n}-j}(x) , \\
 x A^{(i)}_{{n}}(x)&=A^{(i)}_{{n}-1}(x)+b_{{n}-1}^0 A^{(i)}_{{n}}(x)+\sum_{j=1}^p b^j_{{n}+{j-1}}A^{(i)}_{{n}+j}(x),& i&\in\{1,\dots,p\}.
 \end{aligned}
\end{align}
In matrix terms read
\begin{align*}
 T\begin{bNiceMatrix}
 B_0(x)\\B_1(x)\\B_2(x)\\\Vdots
 \end{bNiceMatrix}&=x\begin{bNiceMatrix}
 B_0(x)\\B_1(x)\\B_2(x)\\\Vdots
 \end{bNiceMatrix}, &
T^{\top}\begin{bNiceMatrix}
 A^{(i)}_1(x)\\[4pt]A^{(i)}_2(x)\\[4pt]A^{(i)}_3(x)\\
 \Vdots
 \end{bNiceMatrix}&=x\begin{bNiceMatrix}
 A^{(i)}_1(x)\\[4pt]A^{(i)}_2(x)\\[4pt]A^{(i)}_3(x)\\
 \Vdots
 \end{bNiceMatrix},&i\in\{1,\dots,p\},
\end{align*}
in terms of Hessenberg matrix
\begin{align}
\label{Hessenberg}
T\coloneq{\left[\begin{NiceMatrix}
		b^{0}_0 & 1 & 0&\Cdots &&&&\\
 b^{1}_1 & b^{0}_1 & 1 &\Ddots&&&&\\
&b^{1}_{2}&b^{0}_{2}&\Ddots&&&&\\ \Vdots&&\Ddots&\Ddots&&&&\\
 b^{p}_p&&&&&&&\\
 0&b^p_{p+1}&&&&&&\\
\Vdots&\Ddots[shorten-end=20pt]&\Ddots[shorten-end=20pt]&&&&&\\ &&&&&&&
	\end{NiceMatrix}\right]}.
\end{align}


\subsection{Hypergeometric Functions}
Before delving into the main results, it's important to recall that, in the context of standard orthogonality, the polynomials can be represented using generalized hypergeometric series, as discussed in \cite{andrews,slater}. These series, denoted as
\begin{align*}
 \pFq{p}{q}{a_1,\ldots,a_p}{\alpha_1,\ldots,\alpha_q}{x}:=\sum_{l=0}^{\infty}\dfrac{(a_1)_l\dots(a_p)_l}{(\alpha_1)_l\dots(\alpha_q)_l}\dfrac{x^l}{l!}.
\end{align*}
In \cite{HahnI} we found many type I polynomials families for systems of $p=2$ weight functions. The most of them were expressed through the double series known as the Kampé de Fériet functions \cite{Srivastava}
\begin{multline}
 \label{KF}
 \KF{n:r;s}{q:k;j}{(a_1,\dots,a_n):(b_1,\dots,b_r);(c_1,\dots,c_s)}{(\alpha_1,\dots,\alpha_q):(\beta_1,\dots,\beta_k);(\gamma_1,\dots,\gamma_j)}{x,y}\\
 \coloneq\sum_{l=0}^{\infty}\sum_{m=0}^{\infty}\dfrac{(a_1)_{l+m}\dots(a_n)_{l+m}}{(\alpha_1)_{l+m}\dots(\alpha_q)_{l+m}}\dfrac{(b_1)_l\dots(b_r)_l}{(\beta_1)_l\dots(\beta_k)_l}\dfrac{(c_1)_m\dots(c_s)_m}{(\gamma_1)_m\dots(\gamma_j)_m}\dfrac{x^l}{l!}\dfrac{y^m}{m!}.
\end{multline}
Here, we are going to need a generalization of these previous ones known as the multiple Kampé de Fériet functions \cite{Srivastava}
\begin{multline}
 \label{MultipleKF}
 \KF{n:r_1;\dots;r_p}{q:k_1;\dots;k_p}{(a_1,\dots,a_n):(b_1,\dots,b_{r_1});\dots;(c_1,\dots,c_{r_p})}{(\alpha_1,\dots,\alpha_q):(\beta_1,\dots,\beta_{k_1});\dots;(\gamma_1,\dots,\gamma_{k_p})}{x_1,\dots,x_p}\\
 \coloneq\sum_{l_1=0}^{\infty}\cdots\sum_{l_p=0}^{\infty}\dfrac{(a_1)_{l_1+\cdots+l_p}\cdots(a_n)_{l_1+\cdots+l_p}}{(\alpha_1)_{l_1+\cdots+l_p}\cdots(\alpha_q)_{l_1+\cdots+l_p}}\dfrac{(b_1)_{l_1}\cdots(b_{r_1})_{l_1}}{(\beta_1)_{l_1}\cdots(\beta_{k_1})_{l_1}}\cdots\dfrac{(c_1)_{l_p}\cdots(c_{k_p})_{l_p}}{(\gamma_1)_{l_p}\cdots(\gamma_{k_p})_{l_p}}\dfrac{x_1^{l_1}}{l_1!}\cdots\dfrac{x_p^{l_p}}{l_p!}.
\end{multline}
All of these functions are expresed through the Pochhammer symbols $(x)_n$, $x\in\C$ and $n\in\N_0$,
\begin{align*}
 (x)_n\coloneq\dfrac{\Gamma(x+n)}{\Gamma(x)}=\begin{cases}
 x(x+1)\cdots(x+n-1)\;\text{if}\;n\in\N,\\
 1\;\text{if}\;n=0.
 \end{cases}
\end{align*}

\section{Jacobi--Piñeiro with $p$ Weights: the Recurrence Coefficients}
\label{S.JP}

The weight functions are
\begin{align*}
&\begin{aligned}
	 w_{i}(x;\alpha_i)=&x^{\alpha_i},& i&\in\{1,\dots,p\}, 
\end{aligned}&\d\mu(x)&=(1-x)^\beta\d x, & \Delta&=[0,1],
\end{align*}
with $\alpha_1,\dots,\alpha_p,\beta>-1$ and, in order to have an AT system, $\alpha_i-\alpha_j\not\in\mathbb Z$ for $i\neq j$.
The moments are 
\begin{align}
\label{MomentJP}
\int_{0}^{1}x^{\alpha_i+k}(1-x)^\beta\d x=\dfrac{\Gamma(\beta+1)\Gamma(\alpha_i+k+1)}{\Gamma(\alpha_i+\beta+k+2)},
&& k \in \mathbb N_0.
\end{align}
The Jacobi--Piñeiro polynomials of type I are, cf. \cite{JPpmedidas},
\begin{align}
\label{JPTypeI}
 \begin{aligned}
 	 P^{(i)}_{\vec{n}}(x;\alpha_1,\dots,\alpha_p,\beta)&=
 \begin{multlined}[t ][.65\textwidth]
 (-1)^{|\vec{n}|-1}\dfrac{\prod_{q=1}^{p}(\alpha_q+\beta+|\vec{n}|)_{n_q}}{(n_i-1)!\prod_{q=1,q\neq i}^{p}(\alpha_q-\alpha_i)_{n_q}} \dfrac{\Gamma(\alpha_i+\beta+|\vec{n}|)}{\Gamma(\beta+|\vec{n}|)\Gamma(\alpha_i+1)}\\\times\pFq{p+1}{p}{-n_i+1,\alpha_i+\beta+|\vec{n}|, \{\alpha_i-\alpha_q-n_q+1\}_{q\neq i}}{\alpha_i+1,\{\alpha_i-\alpha_q+1\}_{q\neq i}}{x}
 \end{multlined}\\
 &=	\sum_{l=0}^{n_i-1}C_{\vec{n}}^{(i),l}x^l,
 \end{aligned}
\end{align}
with
\begin{align*}
C_{\vec{n}}^{(i),l}\coloneq	\frac{
\displaystyle (-1)^{|\vec{n}|-1}
\prod_{q=1}^{p}(\alpha_q+\beta+|\vec{n}|)_{n_q}}{
\displaystyle
(n_i-1)! 
\prod_{q=1,q\neq i}^{p}(\alpha_q-\alpha_i)_{n_q}} 
\frac{\Gamma(\alpha_i+\beta+|\vec{n}|)}{\Gamma(\beta+|\vec{n}|)\Gamma(\alpha_i+1)}
\frac{(-n_i+1)_l(\alpha_i+\beta+|\vec{n}|)_l}{l!(\alpha_i+1)_l}
\prod_{q=1,q\neq i}^p
\frac{(\alpha_i-\alpha_q-n_q+1)_l}{(\alpha_i-\alpha_q+1)_l}.
\end{align*}
The monic Jacobi--Piñeiro polynomials of type II are, cf. \cite[\S 3]{AskeyII},
\begin{align}
\label{JPTypeII}
 \begin{aligned}
 	 P_{\vec{n}}(x;\alpha_1,\dots,\alpha_p,\beta)&=\sum_{l_1=0}^{n_1}\cdots\sum_{l_p=0}^{n_p}
 C_{\vec{n}}^{l_1,\dots,l_p}\, x^{l_1+\cdots+l_p}\\
 C_{\vec{n}}^{l_1,\dots,l_p}&\coloneq\begin{multlined}[t][.7\textwidth]
 	(-1)^{|\vec{n}|}\prod_{q=1}^p\dfrac{(\alpha_q+1)_{n_q}}{(\alpha_q+\beta+|\vec{n}|+1)_{n_q}}\dfrac{(-n_q)_{l_q}}{l_q!}\dfrac{(\alpha_1+\beta+n_1+1)_{l_1+\cdots+l_p}}{(\alpha_1+1)_{l_1+\cdots+l_p}}
 \\
 \times\dfrac{(\alpha_1+n_1+1)_{l_2+\cdots+l_p}\cdots(\alpha_{p-1}+n_{p-1}+1)_{l_p}}{(\alpha_1+\beta+n_1+1)_{l_2+\cdots+l_p}\cdots(\alpha_{p-1}+\beta+n_1+\cdots+n_{p-1}+1)_{l_p}}
 \\ \times \dfrac{(\alpha_2+\beta+n_1+n_2+1)_{l_2+\cdots+l_p}\cdots(\alpha_{p}+\beta+|\vec{n}|+1)_{l_p}}{(\alpha_2+1)_{l_2+\cdots+l_p}\cdots(\alpha_{p}+1)_{l_p}}.
 \end{multlined}
 \end{aligned}
\end{align}

We will now endeavor to derive an explicit expression for the recurrence coefficients outlined in \eqref{NextNeighbourRecurrence}. 
In~doing so,
our first step is to establish the following summation formula:

\begin{lema}
\label{thelemma}
Let $C_{\vec{n}}^{l_1,\dots,l_p}$, as defined in \eqref{JPTypeII}, and let $m$ be a non-negative integer. Then,
\begin{align*}
\sum_{l_p=0}^{n_p}\cdots\sum_{l_1=0}^{n_1}\dfrac{(\alpha_i+n_i+m)_{l_1+\cdots+l_p}}{(\alpha_i+\beta+n_i+m+1)_{l_1+\cdots+l_p}}C^{l_1,\dots,l_p}_{\vec{n}}=(-1)^{|\vec{n}|}\dfrac{(\beta+1)_{|\vec{n}|}}{(\alpha_i+\beta+n_i+m+1)_{|\vec{n}|}}\prod_{q=1}^{p}\dfrac{(\alpha_q-\alpha_i-n_i-m+1)_{{n}_q}}{(\alpha_q+\beta+|\vec{n}|+1)_{n_q}}.
\end{align*}
\end{lema}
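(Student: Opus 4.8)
The plan is to iterate a single-sum evaluation $p$ times, peeling off the summations one at a time starting from $l_1$. First I would substitute the explicit coefficient $C_{\vec n}^{l_1,\dots,l_p}$ from \eqref{JPTypeII} into the left-hand side. The prefactor $(-1)^{|\vec n|}\prod_{q=1}^p\frac{(\alpha_q+1)_{n_q}}{(\alpha_q+\beta+|\vec n|+1)_{n_q}}$ comes out of all the sums, and likewise each factor $\frac{(-n_q)_{l_q}}{l_q!}$ sits in the $l_q$-sum only. The remaining Pochhammer factors have arguments that depend on the partial sums $l_1+\dots+l_p$, $l_2+\dots+l_p$, and so on — this nested structure is exactly what makes a one-variable-at-a-time reduction possible.

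The key tool is the Chu--Vandermonde / Pfaff--Saalsch\"utz type identity: for a terminating ${}_2F_1$,
\begin{align*}
\sum_{l=0}^{n}\frac{(-n)_l\,(a)_l}{l!\,(c)_l}=\frac{(c-a)_n}{(c)_n},
\end{align*}
and more generally a terminating ${}_3F_2$ at unit argument evaluated by Pfaff--Saalsch\"utz when the parameters are balanced. Carrying out the $l_1$-summation: the factors involving $l_1$ alone are $\frac{(-n_1)_{l_1}}{l_1!}$, the ratio $\frac{(\alpha_1+\beta+n_1+1)_{l_1+\dots+l_p}}{(\alpha_1+1)_{l_1+\dots+l_p}}$ together with the new factor $\frac{(\alpha_i+n_i+m)_{l_1+\dots+l_p}}{(\alpha_i+\beta+n_i+m+1)_{l_1+\dots+l_p}}$ from the statement, and the ratios carrying $l_2+\dots+l_p$ in the denominator that split off an $l_1$-dependent Pochhammer via $(a)_{l_1+\dots+l_p}=(a)_{l_2+\dots+l_p}(a+l_2+\dots+l_p)_{l_1}$. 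After this splitting the $l_1$-sum becomes a terminating hypergeometric sum in $l_1$ with the running index $L\coloneq l_2+\dots+l_p$ entering the parameters; I expect it to be a balanced ${}_3F_2(1)$ that Pfaff--Saalsch\"utz collapses into a ratio of Pochhammer symbols, producing a factor $\frac{(\alpha_1-\alpha_i-n_i-m+1)_{n_1}}{(\alpha_1+\beta+|\vec n|+1)_{n_1}}$ (independent of $L$) times a shift of the remaining parameters that exactly reproduces the original structure with $p$ replaced by $p-1$ and $n_i+m$ replaced by $n_i+m+n_1$ inside the surviving factor. Then induction on $p$ finishes it.

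The main obstacle is bookkeeping: verifying that after the $l_1$-summation the surviving factors reassemble into precisely the same template so that the induction hypothesis applies — in particular that the ${}_3F_2$ is genuinely Saalsch\"utzian (the sum of denominator parameters exceeds the sum of numerator parameters by one) and that the $L$-dependence cancels cleanly between the Pfaff--Saalsch\"utz output and the leftover Pochhammer ratios. One must track how the indices $\alpha_1+\beta+n_1+1$, $\alpha_1+n_1+1$, etc., shift as the "first" weight is consumed, and confirm the telescoping of the products $\prod_q$. I would first do the $p=1$ base case explicitly (a single Chu--Vandermonde), then carry out the $l_1$-step in full for general $p$ to pin down the recursion, and only then write the induction. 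A consistency check at the end: setting $m=0$ and $i$ arbitrary, the right-hand side should be compatible with the known type I normalization, and the special structure of \eqref{JPTypeII} guarantees the balancing condition holds because the original type II coefficients themselves arise from such nested Saalsch\"utzian sums.
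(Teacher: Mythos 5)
Your strategy coincides with the paper's: substitute the explicit type II coefficients, pull out the prefactor, recognize the innermost $l_1$-sum as a terminating balanced ${}_3F_2$ at unit argument, collapse it with Pfaff--Saalschütz, and observe that the surviving $(p-1)$-fold sum has the same template, so the reduction iterates (the paper phrases this as a recursive reduction rather than a formal induction, which is cosmetic). Your identification of the Saalschützian balance and of the net $q=1$ factor $\frac{(\alpha_1-\alpha_i-n_i-m+1)_{n_1}}{(\alpha_1+\beta+|\vec n|+1)_{n_1}}$ is correct. However, the recursion you wrote down is not: the parameter shift after the $l_1$-step is \emph{not} $n_i+m\mapsto n_i+m+n_1$ but $\beta\mapsto\beta+n_1$, with $\alpha_i$, $n_i$, $m$ untouched. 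Concretely, with $M=l_2+\cdots+l_p$, Pfaff--Saalschütz produces the factor $\bigl(\alpha_i+\beta+n_i+m+1+M\bigr)_{n_1}^{-1}$, and combining it with the original ratio gives
\begin{align*}
\frac{(\alpha_i+n_i+m)_M}{(\alpha_i+\beta+n_i+m+1)_M}\,\frac{1}{(\alpha_i+\beta+n_i+m+1+M)_{n_1}}
=\frac{1}{(\alpha_i+\beta+n_i+m+1)_{n_1}}\,\frac{(\alpha_i+n_i+m)_M}{(\alpha_i+\beta+n_i+m+n_1+1)_M},
\end{align*}
so only the lower parameter gains $n_1$ while the numerator $(\alpha_i+n_i+m)_M$ is unchanged. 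This matters: with your shift $m\mapsto m+n_1$ the later steps would yield $(\alpha_q-\alpha_i-n_i-m-n_1+1)_{n_q}$ for $q\geq 2$ and the accumulated factors would not telescope, whereas with $\beta\mapsto\beta+n_1$ one gets $(\beta+1)_{n_1}(\beta+n_1+1)_{n_2}\cdots=(\beta+1)_{|\vec n|}$ and $(\alpha_i+\beta+n_i+m+1)_{n_1}(\alpha_i+\beta+n_i+m+n_1+1)_{n_2}\cdots=(\alpha_i+\beta+n_i+m+1)_{|\vec n|}$, exactly the claimed right-hand side. A related point for writing the induction cleanly: after the first step the parameter $\alpha_i+n_i+m$ need no longer be attached to one of the surviving weights (e.g.\ when $i=1$), so state the induction hypothesis with a free numerator parameter $A$ paired with $A+\beta+1$ downstairs (Saalschütz balance only uses that they differ by $\beta+1$), or simply iterate the reduction keeping $\alpha_i+n_i+m$ fixed and shifting $\beta$, as the paper does. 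With that correction your plan goes through and is the paper's proof.
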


\begin{proof}
By substituting the previously defined type II coefficients into the multiple sum, we can express it as:
\begin{align*}
 &(-1)^{|\vec{n}|}\prod_{q=1}^p\dfrac{(\alpha_q+1)_{n_q}}{(\alpha_q+\beta+|\vec{n}|+1)_{n_q}}\sum_{l_p=0}^{n_p}\cdots\sum_{l_2=0}^{n_2}\dfrac{(-n_p)_{l_p}}{l_p!}\cdots\dfrac{(-n_2)_{l_2}}{l_2!}\dfrac{(\alpha_i+n_i+m)_{l_2+\cdots+l_p}}{(\alpha_i+\beta+n_i+m+1)_{l_2+\cdots+l_p}}\dfrac{(\alpha_1+\beta+n_1+1)_{l_2+\cdots+l_p}}{(\alpha_1+1)_{l_2+\cdots+l_p}}
 \\
 &{\times\dfrac{(\alpha_1+n_1+1)_{l_2+\cdots+l_p}\cdots(\alpha_{p-1}+n_{p-1}+1)_{l_p}}{(\alpha_1+\beta+n_1+1)_{l_2+\cdots+l_p}\cdots(\alpha_{p-1}+\beta+n_1+\cdots+n_{p-1}+1)_{l_p}}
 \dfrac{(\alpha_2+\beta+n_1+n_2+1)_{l_2+\cdots+l_p}\cdots(\alpha_{p}+\beta+|\vec{n}|+1)_{l_p}}{(\alpha_2+1)_{l_2+\cdots+l_p}\cdots(\alpha_{p}+1)_{l_p}}}\\
 &\times\underbrace{\sum_{l_1=0}^{n_1}\dfrac{(-n_1)_{l_1}}{l_1!}\dfrac{(\alpha_i+n_i+m+l_2+\cdots+l_p)_{l_1}}{(\alpha_i+\beta+n_i+m+1+l_2+\cdots+l_p)_{l_1}}\dfrac{(\alpha_1+\beta+n_1+1+l_2+\cdots+l_p)_{l_1}}{(\alpha_1+1+l_2+\cdots+l_p)_{l_1}}}_{=\pFq{3}{2}{-n_1,\alpha_i+n_i+m+l_2+\cdots+l_p,\alpha_1+\beta+n_1+1+l_2+\cdots+l_p}{\alpha_i+\beta+n_i+m+1+l_2+\cdots+l_p,\alpha_1+1+l_2+\cdots+l_p}{1}}.
\end{align*}
Observing the summation labeled by $l_1$, it becomes evident that it corresponds to a $_3F_2$ hypergeometric function, which adheres to the Pfaff--Saalschütz formula
\begin{align*}
 \pFq{3}{2}{-n,a,b,}{c,-n+a+b+1-c}{1}=\dfrac{(c-a)_n(c-b)_n}{(c)_n(c-a-b)_n}.
\end{align*}
Thus, we can utilize this formula to determine
\begin{multline*}
 \pFq{3}{2}{-n_1,\alpha_i+n_i+m+l_2+\cdots+l_p,\alpha_1+\beta+n_1+1+l_2+\cdots+l_p}{\alpha_i+\beta+n_i+m+1+l_2+\cdots+l_p,\alpha_1+1+l_2+\cdots+l_p}{1}\\
 ={\dfrac{(\beta+1)_{n_1}(\alpha_i-\alpha_1+n_i-n_1+m)_{n_1}}{(\alpha_i+\beta+n_i+m+1+l_2+\cdots+l_p)_{n_1}(-\alpha_1-n_1-l_2-\cdots-l_p)_{n_1}}}.
\end{multline*}
Substituting the previous expression and clearing, we obtain:
\begin{multline*}
 (-1)^{|\vec{n}|}\prod_{q=1}^p\dfrac{(\alpha_q+1)_{n_q}}{(\alpha_q+\beta+|\vec{n}|+1)_{n_q}}{\dfrac{(\beta+1)_{n_1}(\alpha_1-\alpha_i-n_i-m+1)_{n_1}}{(\alpha_i+\beta+n_i+m+1)_{n_1}(\alpha_1+1)_{n_1}}}\sum_{l_p=0}^{n_p}\cdots\sum_{l_3=0}^{n_3}\dfrac{(-n_p)_{l_p}}{l_p!}\cdots\dfrac{(-n_3)_{l_3}}{l_2!} 
  \\
 \times\dfrac{(\alpha_i+n_i+m)_{l_3+\cdots+l_p}}{(\alpha_i+\beta+n_i+n_1+m+1)_{l_3+\cdots+l_p}}\dfrac{(\alpha_2+\beta+n_1+n_2+1)_{l_3+\cdots+l_p}}{(\alpha_2+1)_{l_3+\cdots+l_p}}
 \\
 \times\dfrac{(\alpha_2+n_2+1)_{l_3+\cdots+l_p}\cdots(\alpha_{p-1}+n_{p-1}+1)_{l_p}}{(\alpha_2+\beta+n_1+n_2+1)_{l_3+\cdots+l_p}\cdots(\alpha_{p-1}+\beta+n_1+\cdots+n_{p-1}+1)_{l_p}}
  \\
\times
 \dfrac{(\alpha_3+\beta+n_1+n_2+n_3+1)_{l_3+\cdots+l_p}\cdots(\alpha_{p}+\beta+|\vec{n}|+1)_{l_p}}{(\alpha_3+1)_{l_3+\cdots+l_p}\cdots(\alpha_{p}+1)_{l_p}}\\
 \times\underbrace{\sum_{l_2=0}^{n_2}\dfrac{(-n_2)_{l_2}}{l_2!}\dfrac{(\alpha_i+n_i+m+l_3+\cdots+l_p)_{l_2}}{(\alpha_i+\beta+n_i+n_1+m+1+l_3+\cdots+l_p)_{l_2}}\dfrac{(\alpha_2+\beta+n_1+n_2+1+l_3+\cdots+l_p)_{l_2}}{(\alpha_2+1+l_3+\cdots+l_p)_{l_2}}}_{=\pFq{3}{2}{-n_2,\alpha_i+n_i+m+l_3+\cdots+l_p,\alpha_2+\beta+n_1+n_2+1+l_3+\cdots+l_p}{\alpha_i+\beta+n_i+n_1+m+1+l_3+\cdots+l_p,\alpha_2+1+l_3+\cdots+l_p}{1}}.
\end{multline*}
Now, through a parameter change, we transform the current $(p-1)$ multiple sum into an equivalent expression to the previous $p$ multiple sum:
\begin{align*}
 (n_1,\dots,n_p)\in\mathbb N_0^p&\rightarrow(n_2,\dots,n_p)\in\mathbb N_0^{p-1},\\
 (\alpha_1,\dots,\alpha_p)\in\mathbb R^{p}&\rightarrow(\alpha_2,\dots,\alpha_p)\in\mathbb R^{p-1},\\
 \alpha_i&\rightarrow\alpha_i,\\
 \beta&\rightarrow\beta+n_1.
\end{align*}
Hence, by applying this procedure recursively, we can iteratively reduce all the sums, yielding:
\begin{multline*}
 (-1)^{|\vec{n}|} \prod_{q=1}^p\dfrac{(\alpha_q+1)_{n_q}}{(\alpha_q+\beta+|\vec{n}|+1)_{n_q}}
 {\dfrac{(\beta+1)_{n_1}(\alpha_1-\alpha_i-n_i-m+1)_{n_1}}{(\alpha_i+\beta+n_i+m+1)_{n_1}(\alpha_1+1)_{n_1}}}\\
 \times{\dfrac{(\beta+n_1+1)_{n_2}(\alpha_2-\alpha_i-n_i-m+1)_{n_2}}{(\alpha_i+\beta+n_i+n_1+m+1)_{n_2}(\alpha_2+1)_{n_2}}}\cdots{\dfrac{(\beta+n_1+\cdots+n_{p-1}+1)_{n_p}(\alpha_p-\alpha_i-n_p-m+1)_{n_p}}{(\alpha_i+\beta+n_i+n_1+\cdots+n_{p-1}+m+1)_{n_p}(\alpha_p+1)_{n_p}}}\\
 =(-1)^{|\vec{n}|}
 {\dfrac{(\beta+1)_{|\vec{n}|}}{(\alpha_i+\beta+n_i+m+1)_{|\vec{n}|}}}\prod_{q=1}^p\dfrac{(\alpha_q-\alpha_i-n_i-m+1)_{n_q}}{(\alpha_q+\beta+|\vec{n}|+1)_{n_q}}.
\end{multline*}
\end{proof}

Equipped with this lemma, we are now prepared to establish that:
\begin{teo}[Explicit expressions for $p$ weights Jacobi--Piñeiro's recurrence coefficients]
The Jacobi–Piñeiro multiple orthogonal polynomials of type I, as expressed in \eqref{JPTypeI}, and type II, as defined in \eqref{JPTypeII}, each adhere to their respective nearest-neighbor recurrence relations, as outlined in \eqref{NextNeighbourRecurrence}, with respect to the coefficients:
\begin{align}
\label{JPRecurrence}
\begin{aligned}
	b_{\vec{n}}^0(k)
&=\begin{multlined}[t][.8\textwidth]
\dfrac{(\alpha_k+n_k+1)}{(\alpha_k+\beta+n_k+|\vec{n}|+2)}\prod_{q=1}^p\dfrac{(\alpha_k-\alpha_q+n_k+1)}{(\alpha_k-\alpha_q+n_k+1-n_q)}\\+\sum_{i=1}^p \dfrac{(\alpha_i+n_i)(\alpha_k+\beta+n_k+|\vec{n}|+1)}{(\alpha_i+\beta+n_i+|\vec{n}|)_2(\alpha_i-\alpha_k-n_k+n_i-1)}\dfrac{\prod_{q=1}^p(\alpha_i-\alpha_q+n_i)}{\prod_{q=1,q\neq i}^p(\alpha_i-\alpha_q+n_i-n_q)},
\end{multlined}\\
 b_{\vec{n}}^j
&=\begin{multlined}[t][.8\textwidth]
	(\beta+|\vec{n}|-j+1)_j\prod_{q=1}^p\dfrac{(\alpha_q+\beta+|\vec{n}|-j+1)_{j}}{(\alpha_q+\beta+|\vec{n}|-j+n_q+1)_{j}}\sum_{i\in S(\pi,j)}\dfrac{(\alpha_i+n_i)}{(\alpha_i+\beta+n_i+|\vec{n}|-j)_{j+2}}\\
\times\dfrac{\prod_{q=1}^p(\alpha_i-\alpha_q+n_i)}{\prod_{q\in S(\pi,j),q\neq i}(\alpha_i-\alpha_q+n_i-n_q)\prod_{q\in S^{\textsf c}(\pi,j)}(\alpha_q+\beta+|\vec{n}|-j+n_q)} ,\quad j\in\{1,\dots,p\}.
\end{multlined}
\end{aligned}
\end{align}
\end{teo}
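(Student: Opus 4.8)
The recurrence relations \eqref{NextNeighbourRecurrence} hold for any AT system, so the substance of the theorem is the explicit evaluation of the coefficients $b^0_{\vec{n}}(k)$ and $b^j_{\vec{n}}$; formulas \eqref{General_Coef0} and \eqref{General_Coefj} are the starting point. The plan is to insert into them the explicit type~I coefficients of \eqref{JPTypeI} and the explicit type~II coefficients of \eqref{JPTypeII}, compute every Jacobi--Piñeiro moment by \eqref{MomentJP}, and then collapse all the multiple sums over $l_1,\dots,l_p$ by means of Lemma~\ref{thelemma}.

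The key point that makes Lemma~\ref{thelemma} applicable is that, by \eqref{MomentJP},
\begin{align*}
\int_0^1 x^{\alpha_i+n_i+m+l_1+\cdots+l_p}(1-x)^\beta\,\d x=\frac{\Gamma(\beta+1)\Gamma(\alpha_i+n_i+m+1)}{\Gamma(\alpha_i+\beta+n_i+m+2)}\,\frac{(\alpha_i+n_i+m+1)_{l_1+\cdots+l_p}}{(\alpha_i+\beta+n_i+m+2)_{l_1+\cdots+l_p}},
\end{align*}
so the only factor depending on $l_1+\cdots+l_p$ is precisely the one occurring in Lemma~\ref{thelemma} with its parameter equal to $m+1$. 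Hence $\int_\Delta B_{\vec{n}}(x)\,x^{n_i}w_i(x)\d\mu(x)$ is evaluated by the lemma with $m=1$, and $\int_\Delta B_{\vec{n}}(x)\,x^{n_k+1}w_k(x)\d\mu(x)$ by the lemma with $m=2$; in both cases one obtains an explicit ratio of Gamma functions times a product of Pochhammer symbols. It then remains to multiply by the relevant type~I coefficients read off from the closed form displayed after \eqref{JPTypeI}: for $b^0_{\vec{n}}(k)$ these are $C^{(k),n_k}_{\vec{n}+\vec{e}_k}$ (the leading coefficient of $A^{(k)}_{\vec{n}+\vec{e}_k}$) and $C^{(i),n_i-1}_{\vec{n}+\vec{e}_k}$ (for $i\neq k$ the leading coefficient of $A^{(i)}_{\vec{n}+\vec{e}_k}$, for $i=k$ its subleading coefficient), while for $b^j_{\vec{n}}$ they are $C^{(i),n_i-1}_{\vec{n}-\vec{s}_{j-1}}$ with $i\in S(\pi,j)$.

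The heart of the proof is the ensuing simplification. For $b^0_{\vec{n}}(k)$ I would isolate the term built from $C^{(k),n_k}_{\vec{n}+\vec{e}_k}$ and the $m=2$ output of the lemma; after repeated use of $(-n)_n/n!=(-1)^n$, $(a)_{r+1}=a\,(a+1)_r$, $(a)_r/(a+1)_r=a/(a+r)$ and the reflection identity $(a)_r=(-1)^r(1-a-r)_r$, this term telescopes to the first summand in \eqref{JPRecurrence}, while the $p$ remaining terms, carrying $C^{(i),n_i-1}_{\vec{n}+\vec{e}_k}$ and the $m=1$ output, combine into the displayed sum over $i$. For $b^j_{\vec{n}}$ the same Pochhammer manipulations apply, the new ingredient being the shift $\vec{n}\mapsto\vec{n}-\vec{s}_{j-1}$: since $S^{\textsf c}(\pi,j)=\{\pi(1),\dots,\pi(j-1)\}$ is exactly the set of entries of $\vec{n}$ that are decremented, one has $|\vec{n}-\vec{s}_{j-1}|=|\vec{n}|-j+1$, and the denominator $\prod_{q\neq i}(\alpha_q-\alpha_i)_{(\vec{n}-\vec{s}_{j-1})_q}$ of the type~I coefficient, once merged with the lemma's product, splits into the two factors $\prod_{q\in S(\pi,j),\,q\neq i}(\alpha_i-\alpha_q+n_i-n_q)$ and $\prod_{q\in S^{\textsf c}(\pi,j)}(\alpha_q+\beta+|\vec{n}|-j+n_q)$ appearing in \eqref{JPRecurrence}. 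Finally, no separate work is needed for the type~I half of the statement: by the biorthogonality \eqref{biorthogonality} the polynomials $A^{(i)}_{\vec{n}}$ satisfy \eqref{NextNeighbourRecurrence} with the very same coefficients, merely evaluated at shifted multi-indices.

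I expect the main obstacle to be exactly this bookkeeping — tracking, through the lemma and the two multi-index shifts, how the numerous Pochhammer factors recombine into the compact products of \eqref{JPRecurrence}: in particular handling the ``resonant'' factors such as $(\alpha_k-\alpha_k+\cdots)$ which collapse to plain linear terms, correctly accounting for the decrements of $n_{\pi(1)},\dots,n_{\pi(j-1)}$, and checking that the apparent poles at $\alpha_i-\alpha_q\in\Z$ cancel as they must for an AT system.
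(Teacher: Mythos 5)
Your proposal follows essentially the same route as the paper: start from \eqref{General_Coef0} and \eqref{General_Coefj}, insert the moments \eqref{MomentJP} so that the multiple sums over $l_1,\dots,l_p$ collapse via Lemma~\ref{thelemma} (with the lemma's parameter $m=1$ for the $x^{n_i}$ moments and $m=2$ for the $x^{n_k+1}$ moment), and then substitute the type~I coefficients from \eqref{JPTypeI} and simplify the Pochhammer factors, with the type~I recurrence covered by the same coefficients through biorthogonality. Your identification of the shifted multi-indices and of $S^{\textsf c}(\pi,j)$ as the decremented entries matches the paper, and your description of the final simplification is at least as detailed as the paper's own.
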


\begin{proof}
Substituting the Jacobi–Piñeiro moments from \eqref{MomentJP} into the expressions \eqref{General_Coef0} and \eqref{General_Coefj}, we obtain:
\begin{align*}
 b_{\vec{n}}^0(k)&=\begin{multlined}[t][.9\textwidth]
 	C^{(k),n_k}_{\vec{n}+\vec{e}_k}\sum_{l_1=0}^{n_1}\cdots\sum_{l_p=0}^{n_p}C^{l_1,\dots,l_p}_{\vec{n}}\dfrac{\Gamma(\beta+1)\Gamma(\alpha_k+n_k+l_1+\cdots+l_p+2)}{\Gamma(\alpha_k+\beta+n_k+l_1+\cdots+l_p+3)}\\+\sum_{i=1}^p C_{\vec{n}+\vec{e}_k}^{(i),n_i-1} 
\sum_{l_1=0}^{n_1}\cdots\sum_{l_p=0}^{n_p}C^{l_1,\dots,l_p}_{\vec{n}}\dfrac{\Gamma(\beta+1)\Gamma(\alpha_i+n_i+l_1+\cdots+l_p+1)}{\Gamma(\alpha_i+\beta+n_i+l_1+\cdots+l_p+2)}
 \end{multlined}\\
&=\begin{multlined}[t][.9\textwidth]
	\dfrac{\Gamma(\beta+1)\Gamma(\alpha_k+n_k+2)}{\Gamma(\alpha_k+\beta+n_k+3)}C^{(k),n_k}_{\vec{n}+\vec{e}_k}\sum_{l_1=0}^{n_1}\cdots\sum_{l_p=0}^{n_p}\dfrac{(\alpha_k+n_k+2)_{l_1+\cdots+l_p}}{(\alpha_k+\beta+n_k+3)_{l_1+\cdots+l_p}}C^{l_1,\dots,l_p}_{\vec{n}}\\+\sum_{i=1}^p \dfrac{\Gamma(\beta+1)\Gamma(\alpha_i+n_i+1)}{\Gamma(\alpha_i+\beta+n_i+2)}C_{\vec{n}+\vec{e}_k}^{(i),n_i-1} 
\sum_{l_1=0}^{n_1}\cdots\sum_{l_p=0}^{n_p}\dfrac{(\alpha_i+n_i+1)_{l_1+\cdots+l_p}}{(\alpha_i+\beta+n_i+2)_{l_1+\cdots+l_p}}C^{l_1,\dots,l_p}_{\vec{n}},
\end{multlined}\\
 b_{\vec{n}}^j&=\sum_{i\in S(\pi,j)} C^{(i),n_i-1}_{\vec{n}-\vec{s}_{j-1}}
\sum_{l_1=0}^{n_1}\cdots\sum_{l_p=0}^{n_p}C^{l_1,\dots,l_p}_{\vec{n}}\dfrac{\Gamma(\beta+1)\Gamma(\alpha_i+n_i+l_1+\cdots+l_p+1)}{\Gamma(\alpha_i+\beta+n_i+l_1+\cdots+l_p+2)}\\
&=\sum_{i\in S(\pi,j)} \dfrac{\Gamma(\beta+1)\Gamma(\alpha_i+n_i+1)}{\Gamma(\alpha_i+\beta+n_i+2)}C^{(i),n_i-1}_{\vec{n}-\vec{s}_{j-1}}
\sum_{l_1=0}^{n_1}\cdots\sum_{l_p=0}^{n_p}\dfrac{(\alpha_i+n_i+1)_{l_1+\cdots+l_p}}{(\alpha_i+\beta+n_i+2)_{l_1+\cdots+l_p}}C^{l_1,\dots,l_p}_{\vec{n}},\quad j\in\{1,\dots,p\}.
\end{align*}
Now, we can utilize Lemma \ref{thelemma} to simplify the sums labeled by $l_1,\dots,l_p$ as:
\begin{align*}
\sum_{l_p=0}^{n_p}\cdots\sum_{l_1=0}^{n_1}\dfrac{(\alpha_i+n_i+m)_{l_1+\cdots+l_p}}{(\alpha_i+\beta+n_i+m+1)_{l_1+\cdots+l_p}}C^{l_1,\dots,l_p}_{\vec{n}}=(-1)^{|\vec{n}|}\dfrac{(\beta+1)_{|\vec{n}|}}{(\alpha_i+\beta+n_i+m+1)_{|\vec{n}|}}\prod_{q=1}^{p}\dfrac{(\alpha_q-\alpha_i-n_i-m+1)_{{n}_q}}{(\alpha_q+\beta+|\vec{n}|+1)_{n_q}}
\end{align*}
and get
\begin{align*}
 b_{\vec{n}}^0(k)&\begin{multlined}[t][.9\textwidth]
 	=(-1)^{|\vec{n}|}\dfrac{(\beta+1)_{|\vec{n}|}}{(\alpha_k+\beta+n_k+3)_{|\vec{n}|}}\prod_{q=1}^{p}\dfrac{(\alpha_q-\alpha_k-n_k-1)_{{n}_q}}{(\alpha_q+\beta+|\vec{n}|+1)_{n_q}}\dfrac{\Gamma(\beta+1)\Gamma(\alpha_k+n_k+2)}{\Gamma(\alpha_k+\beta+n_k+3)}C^{(k),n_k}_{\vec{n}+\vec{e}_k}\\
 +\sum_{i=1}^p (-1)^{|\vec{n}|}\dfrac{(\beta+1)_{|\vec{n}|}}{(\alpha_i+\beta+n_i+2)_{|\vec{n}|}}\prod_{q=1}^{p}\dfrac{(\alpha_q-\alpha_i-n_i)_{{n}_q}}{(\alpha_q+\beta+|\vec{n}|+1)_{n_q}}\dfrac{\Gamma(\beta+1)\Gamma(\alpha_i+n_i+1)}{\Gamma(\alpha_i+\beta+n_i+2)}C_{\vec{n}+\vec{e}_k}^{(i),n_i-1} ,
 \end{multlined}\\
 b_{\vec{n}}^j&=\sum_{i\in S(\pi,j)} (-1)^{|\vec{n}|}\dfrac{(\beta+1)_{|\vec{n}|}}{(\alpha_i+\beta+n_i+2)_{|\vec{n}|}}\prod_{q=1}^{p}\dfrac{(\alpha_q-\alpha_i-n_i)_{{n}_q}}{(\alpha_q+\beta+|\vec{n}|+1)_{n_q}}\dfrac{\Gamma(\beta+1)\Gamma(\alpha_i+n_i+1)}{\Gamma(\alpha_i+\beta+n_i+2)}C^{(i),n_i-1}_{\vec{n}-\vec{s}_{j-1}},
\end{align*}
for $j\in\{1,\dots,p\}$.
Finally, by substituting the type I coefficients from \eqref{JPTypeI} and simplifying, we arrive at the expressions given in \eqref{JPRecurrence}.
\end{proof}

Let us now introduce the following notation:
\begin{equation}\label{eq:alphas}
 \underbrace{\alpha_0}_{\coloneq-1},\alpha_1,\alpha_2,\dots,\alpha_p,\;\underbrace{\alpha_{p+1}}_{\coloneq\alpha_{1}+1},\underbrace{\alpha_{p+2}}_{\coloneq\alpha_{2}+1},\dots,\underbrace{\alpha_{2p}}_{\coloneq\alpha_{p}+1},\;\underbrace{\alpha_{2p+1}}_{\coloneq\alpha_{1}+2},\underbrace{\alpha_{2p+2}}_{\coloneq\alpha_{2}+2},\dots
\end{equation}

This enables us to express the step-line recurrence coefficients from \eqref{StepLineRecurrence} in a unified formula as follows:

\begin{cor}[Step line Jacobi--Piñeiro's recurrence coefficients]
The Jacobi–Piñeiro type I, as given in \eqref{JPTypeI}, and type II, as defined in \eqref{JPTypeII}, multiple orthogonal polynomials in the stepline, adhere to a recurrence relation of the form \eqref{StepLineRecurrence} with respect to the coefficients.
\begin{multline}
\label{JPStepLineRecurrence}
 b_{{pm+k}}^j=\dfrac{(\alpha_{k+1}+\beta+(p+1)m+k+1-j)}{\prod_{q=p+k+2-j}^{p+k+1}(\alpha_q+\beta+(p+1)m+k-j)}\dfrac{(\beta+pm+k+1-j)_j\prod_{q=1}^p(\alpha_q+\beta+pm+k+1-j)_{j}}{\prod_{q=k+1}^{p+k}(\alpha_q+\beta+(p+1)m+k+1-j)_{j}}\\
 \times\sum_{i=k+1}^{p+k+1-j}\dfrac{(\alpha_i+m)}{(\alpha_i+\beta+(p+1)m+k-j)_{j+2}}\dfrac{\prod_{q=1}^p(\alpha_i-\alpha_q+m)}{\prod_{q=k+1,q\neq i}^{p+k+1-j}(\alpha_i-\alpha_q)},
\end{multline}
for $j\in\{0,1,\dots,p\}$, $m\geq0$ and $k\in\{0,\dots,p-1\}$.
\end{cor}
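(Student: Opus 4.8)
The plan is to obtain \eqref{JPStepLineRecurrence} by specializing the multi-index formula \eqref{JPRecurrence} to step-line multi-indices and then rewriting the outcome in terms of the extended parameter sequence \eqref{eq:alphas}. Fix $m\ge 0$ and $k\in\{0,\dots,p-1\}$, put $\vec n=(\underbrace{m+1,\dots,m+1}_{k},\underbrace{m,\dots,m}_{p-k})$, so that $|\vec n|=pm+k$, and choose for $\pi$ the permutation with one-line notation $(k,k-1,\dots,1,p,p-1,\dots,k+1)$; this is precisely the permutation for which $\vec n-\vec s_{j}$ is the step-line multi-index of level $pm+k-j$, so that with the relabelling of the step-line subsection $b^{j}_{pm+k}=b^{j}_{\vec n}$ for $j\in\{1,\dots,p\}$ and $b^{0}_{pm+k}=b^{0}_{\vec n}(k+1)$. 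For this $\pi$ one computes the index sets of \eqref{JPRecurrence} explicitly: when $1\le j\le k+1$,
\[
S(\pi,j)=\{1,\dots,k+1-j\}\cup\{k+1,\dots,p\},\qquad S^{\textsf{c}}(\pi,j)=\{k+2-j,\dots,k\},
\]
and when $k+1\le j\le p$,
\[
S(\pi,j)=\{k+1,\dots,p+k+1-j\},\qquad S^{\textsf{c}}(\pi,j)=\{1,\dots,k\}\cup\{p+k+2-j,\dots,p\},
\]
and in all cases $|S^{\textsf{c}}(\pi,j)|=j-1$.

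Next I would substitute $n_i=m+1$ for $i\le k$ and $n_i=m$ for $i\ge k+1$ into \eqref{JPRecurrence}, replacing $k$ by $k+1$ in the first line there to produce $b^{0}_{\vec n}(k+1)$. For $b^{j}_{\vec n}$ one distinguishes the case $j\le k+1$, in which $\vec n-\vec s_{j-1}=(\underbrace{m+1}_{k+1-j},\underbrace{m}_{p-k-1+j})$ sits at step-line level $pm+k+1-j$, from the case $k+1\le j\le p$, in which $\vec n-\vec s_{j-1}=(\underbrace{m}_{p+k+1-j},\underbrace{m-1}_{j-k-1})$ dips one unit lower; in both cases every Pochhammer and every linear factor of \eqref{JPRecurrence} becomes an explicit rational function of $m,k,\beta$ and $\alpha_1,\dots,\alpha_p$. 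This yields, a priori, three distinct closed expressions: one for $b^{0}_{pm+k}$ and two for $b^{j}_{pm+k}$.

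The decisive step is to recognize that, after renaming parameters via \eqref{eq:alphas}, these three expressions coincide with the single formula \eqref{JPStepLineRecurrence}. Two mechanisms are at work. First, an index $i$ lying in the ``$m+1$ block'' $\{1,\dots,k\}$ always enters \eqref{JPRecurrence} shifted by $+1$ relative to an index in the ``$m$ block'' (because $n_i=m+1$), and an index forced to the value $m-1$ enters shifted by $-1$; identifying such a shifted $\alpha_i$ with the corresponding entry $\alpha_{p+i}=\alpha_i+1$ of the sequence \eqref{eq:alphas} (and analogously for the downward shift) turns the disconnected set $S(\pi,j)$ into the contiguous block $\{k+1,\dots,p+k+1-j\}$ of \eqref{JPStepLineRecurrence}, and turns the products $\prod_{q=1}^{p}(\alpha_q+\beta+|\vec n|-j+n_q+1)_j$, $\prod_{q\in S^{\textsf{c}}(\pi,j)}(\alpha_q+\beta+|\vec n|-j+n_q)$ and $\prod_{q\in S(\pi,j),\,q\neq i}(\alpha_i-\alpha_q+n_i-n_q)$ into the contiguous products $\prod_{q=k+1}^{p+k}$, $\prod_{q=p+k+2-j}^{p+k+1}$ and $\prod_{q=k+1,\,q\neq i}^{p+k+1-j}$ occurring in \eqref{JPStepLineRecurrence}. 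Second, the convention $\alpha_0=-1$ absorbs the pure-$\beta$ Pochhammer $(\beta+pm+k+1-j)_j$ and simultaneously accounts for the lone numerator factor $\alpha_{p+k+1}+\beta+(p+1)m+k-j=\alpha_{k+1}+\beta+(p+1)m+k+1-j$ of \eqref{JPStepLineRecurrence}; it is also the device that makes the formula correct at the endpoint $j=0$, where all length-$j$ products are empty and \eqref{JPStepLineRecurrence} must collapse to $b^{0}_{pm+k}$.

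I expect the bookkeeping in these last identifications to be the only genuine obstacle: one must verify, uniformly in $k$ and for all three regimes ($j=0$; $1\le j\le k+1$; and $k+1\le j\le p$, including the wrap-around where $m-1$-entries appear and the endpoint $j=p$ where a single summand survives), that the factors dropped from and supplied to \eqref{JPRecurrence} by the relabelling \eqref{eq:alphas} match the contiguous windows $\prod_{q=k+1}^{p+k}$, $\prod_{q=p+k+2-j}^{p+k+1}$ and $\prod_{q=k+1,\,q\neq i}^{p+k+1-j}$ of \eqref{JPStepLineRecurrence} precisely. Everything upstream of this — inserting the moments \eqref{MomentJP} and invoking Lemma \ref{thelemma} — has already been carried out for arbitrary multi-indices in the proof of the theorem, so the corollary needs no new analytic ingredient, only careful index arithmetic.
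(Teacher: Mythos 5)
Your proposal is correct and follows essentially the same route as the paper: specialize \eqref{JPRecurrence} to the step-line multi-index $(\underbrace{m+1,\dots,m+1}_{k},\underbrace{m,\dots,m}_{p-k})$ with the permutation $(k,k-1,\dots,1,p,p-1,\dots,k+1)$, split into the regimes $j\le k+1$ and $k+1\le j\le p$, and unify the resulting expressions through the extended parameters \eqref{eq:alphas}, which is exactly how the paper arrives at \eqref{JPStepLineRecurrence}. The only slight inaccuracy is attributing the factor $(\beta+pm+k+1-j)_j$ and the numerator $(\alpha_{k+1}+\beta+(p+1)m+k+1-j)$ to the convention $\alpha_0=-1$: the former is simply $(\beta+|\vec{n}|-j+1)_j$ carried over unchanged, and the latter is $\alpha_{p+k+1}+\beta+(p+1)m+k-j$ (via $\alpha_{p+k+1}=\alpha_{k+1}+1$), cancelling the top factor of the window $\prod_{q=p+k+2-j}^{p+k+1}$, so $\alpha_0$ is never actually needed in this corollary.
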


\section{Laguerre of First Kind with $p$ Weights: the Recurrence Coefficients}
\label{S.L1k}
In this case we have
\begin{align*}
 &\begin{aligned}
 	 w_{i}(x;\alpha_i)&=\Exp{-x}x^{\alpha_i}, & i&\in\{1,\dots,p\},
 \end{aligned}&\d\mu(x)&=\d x, & \Delta&=[0,\infty),
\end{align*}
with $\alpha_1,\dots,\alpha_p>-1$ and, in order to have an AT system, $\alpha_i-\alpha_j\not\in\mathbb Z$ for $i\neq j$.

The Laguerre of first kind multiple orthogonal polynomials can be obtained as a limit of the Jacobi--Piñeiro polynomials. The type I are, cf. \cite{JPpmedidas},
\begin{align}
\label{LaguerreFKTypeI}
 \begin{aligned}
 & L^{(i)}_{\vec{n}}(x;\alpha_1,\dots,\alpha_p) =\lim_{\beta\rightarrow\infty}\dfrac{\Gamma(\beta+|\vec{n}|)}{\prod_{q=1}^{p}(\alpha_q+\beta+|\vec{n}|)_{n_q}\Gamma(\alpha_i+\beta+|\vec{n}|)}P_{\vec{n}}^{(i)}\left(\dfrac{x}{\beta};\alpha_1,\dots,\alpha_p,\beta\right)\\
 &\hspace{1.25cm} =\lim_{\beta\rightarrow\infty}\dfrac{1}{\beta^{\alpha_i+|\vec{n}|}}P_{\vec{n}}^{(i)}\left(\dfrac{x}{\beta};\alpha_1,\dots,\alpha_p,\beta\right)\\
 &\hspace{1.25cm} =\dfrac{(-1)^{|\vec{n}|-1}}{(n_i-1)!\prod_{q=1,q\neq i}^{p}(\alpha_q-\alpha_i)_{n_q}\Gamma(\alpha_i+1)} \pFq{p}{p}{-n_i+1,\{ \alpha_i-\alpha_q-n_q+1\}_{q\neq i}}{\alpha_i+1,\{\alpha_i-\alpha_q+1\}_{q\neq i}}{x}\\
 &\hspace{1.25cm} =\dfrac{(-1)^{|\vec{n}|-1}}{(n_i-1)!\prod_{q=1,q\neq i}^{p}(\alpha_q-\alpha_i)_{n_q}\Gamma(\alpha_i+1)}\sum_{l=0}^{n_i-1}{\dfrac{(-n_i+1)_l}{l!}\dfrac{1}{(\alpha_i+1)_l}\prod_{q=1,q\neq i}^p\dfrac{(\alpha_i-\alpha_q-n_q+1)_l}{(\alpha_i-\alpha_q+1)_l}}x^l.
 \end{aligned}
\end{align}
The monic type II polynomials are, cf. \cite[\S 4]{AskeyII},
\begin{align}
\label{LaguerreFKTypeII}
 \begin{aligned}
 	& L_{\vec{n}}(x;\alpha_1,\dots,\alpha_p)=\lim_{\beta\rightarrow\infty}\beta^{|\vec{n}|}P_{\vec{n}}\left(\dfrac{x}{\beta},\alpha_1,\dots,\alpha_p,\beta\right)=\sum_{l_1=0}^{n_1}\cdots\sum_{l_p=0}^{n_p}
 C_{\vec{n}}^{l_1,\dots,l_p}\, x^{l_1+\cdots+l_p}\\
 & C_{\vec{n}}^{l_1,\dots,l_p} \coloneq(-1)^{|\vec{n}|}\prod_{q=1}^p{(\alpha_q+1)_{n_q}}\dfrac{(-n_q)_{l_q}}{l_q!}\dfrac{1}{(\alpha_1+1)_{l_1+\cdots+l_p}}
{\dfrac{(\alpha_1+n_1+1)_{l_2+\cdots+l_p}\cdots(\alpha_{p-1}+n_{p-1}+1)_{l_p}}{(\alpha_2+1)_{l_2+\cdots+l_p}\cdots(\alpha_{p}+1)_{l_p}}}.
 \end{aligned}
\end{align}


\begin{teo}[Explicit expressions for $p$ weights Laguerre of first kind recurrence coefficients]
The Laguerre of the first kind multiple orthogonal polynomials of type I, as described in \eqref{LaguerreFKTypeI}, and type II, as defined in \eqref{LaguerreFKTypeII}, each adhere to their respective nearest-neighbor recurrence relations, as outlined in \eqref{NextNeighbourRecurrence}, with respect to the coefficients:
\begin{align}
 \label{LaguerreFKRecurrence}
\begin{aligned}
	 b_{\vec{n}}^0(k)
=&{(\alpha_k+n_k+1)}\prod_{q=1}^p\dfrac{(\alpha_k-\alpha_q+n_k+1)}{(\alpha_k-\alpha_q+n_k+1-n_q)}
 \\ 
 & \hspace{4cm} +\sum_{i=1}^p \dfrac{(\alpha_i+n_i)}{(\alpha_i-\alpha_k-n_k+n_i-1)}\dfrac{\prod_{q=1}^p(\alpha_i-\alpha_q+n_i)}{\prod_{q=1,q\neq i}^p(\alpha_i-\alpha_q+n_i-n_q)},\\
 b_{\vec{n}}^j
=&\sum_{i\in S(\pi,j)}\dfrac{(\alpha_i+n_i)\prod_{q=1}^p{(\alpha_i-\alpha_q+n_i)}}{\prod_{
		q\in S(\pi,j),q\neq i}(\alpha_i-\alpha_q+n_i-n_q)} ,\quad j\in\{1,\dots,p\}.
\end{aligned}
\end{align}
\end{teo}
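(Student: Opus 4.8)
The plan is to obtain \eqref{LaguerreFKRecurrence} not by repeating the moment computation of the previous theorem, but by passing the Jacobi--Piñeiro recurrence \eqref{JPRecurrence} to the confluent limit $x\mapsto x/\beta$, $\beta\to\infty$ already used in \eqref{LaguerreFKTypeI}--\eqref{LaguerreFKTypeII}. So first I would record how the nearest-neighbour coefficients of \eqref{NextNeighbourRecurrence} transform under this scaling, and then simply evaluate the limits on the closed forms of \eqref{JPRecurrence}.

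\textbf{Scaling of the coefficients.} Write the type II relation of \eqref{NextNeighbourRecurrence} for the monic Jacobi--Piñeiro polynomials $P_{\vec n}$ at the point $x/\beta$ and multiply through by $\beta^{|\vec n|+1}$. Using $|\vec n+\vec e_k|=|\vec n|+1$ and $|\vec n-\vec s_j|=|\vec n|-j$, every term $\beta^{|\vec n|+1}P_{\vec m}(x/\beta)$ becomes $\beta^{|\vec n|+1-|\vec m|}\cdot\beta^{|\vec m|}P_{\vec m}(x/\beta)$, and $\beta^{|\vec m|}P_{\vec m}(x/\beta)\to L_{\vec m}(x)$. Matching homogeneity degrees forces
\[
b^{0,L}_{\vec n}(k)=\lim_{\beta\to\infty}\beta\,b^{0,JP}_{\vec n}(k),\qquad b^{j,L}_{\vec n}=\lim_{\beta\to\infty}\beta^{j+1}\,b^{j,JP}_{\vec n},\quad j\in\{1,\dots,p\},
\]
and the limit of the relation is precisely \eqref{NextNeighbourRecurrence} for the $L_{\vec n}$. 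Applying the same argument to the type I relation in \eqref{NextNeighbourRecurrence} with the normalization $\beta^{-\alpha_i-|\vec n|}$ of \eqref{LaguerreFKTypeI}, and using $|\vec n+\vec s_j|=|\vec n|+j$, yields the $A^{(i)}$-recurrence with the very same scaled coefficients, so it suffices to compute the two limits above.

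\textbf{Evaluating the limits.} For $b^{0}$, the first summand of \eqref{JPRecurrence} carries a single factor $\alpha_k+\beta+n_k+|\vec n|+2\sim\beta$ in the denominator, cancelled by the prefactor $\beta$, giving $(\alpha_k+n_k+1)\prod_{q}\frac{\alpha_k-\alpha_q+n_k+1}{\alpha_k-\alpha_q+n_k+1-n_q}$; in the $i$-sum the Pochhammer $(\alpha_i+\beta+n_i+|\vec n|)_2\sim\beta^2$ in the denominator beats the single numerator factor $\alpha_k+\beta+n_k+|\vec n|+1\sim\beta$, so after multiplying by $\beta$ the surviving constant is $\frac{\alpha_i+n_i}{\alpha_i-\alpha_k-n_k+n_i-1}\frac{\prod_q(\alpha_i-\alpha_q+n_i)}{\prod_{q\neq i}(\alpha_i-\alpha_q+n_i-n_q)}$, which is the first line of \eqref{LaguerreFKRecurrence}. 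For $b^{j}$, I would use $|S(\pi,j)|=p-j+1$, $|S^{\textsf c}(\pi,j)|=j-1$ and count powers of $\beta$: $(\beta+|\vec n|-j+1)_j\sim\beta^{j}$; the product $\prod_{q}\frac{(\alpha_q+\beta+|\vec n|-j+1)_j}{(\alpha_q+\beta+|\vec n|-j+n_q+1)_j}\to 1$; in the $i$-th summand $(\alpha_i+\beta+n_i+|\vec n|-j)_{j+2}\sim\beta^{j+2}$ and $\prod_{q\in S^{\textsf c}(\pi,j)}(\alpha_q+\beta+|\vec n|-j+n_q)\sim\beta^{j-1}$; the total $\beta$-exponent is $(j+1)+j-(j+2)-(j-1)=0$, the $S^{\textsf c}$-denominators drop out in the limit, and what remains is $\sum_{i\in S(\pi,j)}\frac{(\alpha_i+n_i)\prod_q(\alpha_i-\alpha_q+n_i)}{\prod_{q\in S(\pi,j),q\neq i}(\alpha_i-\alpha_q+n_i-n_q)}$, the second line of \eqref{LaguerreFKRecurrence}.

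\textbf{Expected obstacle.} There is no genuine difficulty here: the only care needed is the bookkeeping of which factors grow like powers of $\beta$ and which tend to finite nonzero limits, in particular the cancellation of the combinatorial exponent $j+1$ against $(j+2)+(j-1)-j$ in the $b^{j}$ case. For completeness I would remark that the same formula can alternatively be obtained directly, substituting the Laguerre moments $\int_0^\infty x^{\alpha_i+k}e^{-x}\,\d x=\Gamma(\alpha_i+k+1)$ together with \eqref{LaguerreFKTypeI}--\eqref{LaguerreFKTypeII} into \eqref{General_Coef0}--\eqref{General_Coefj} and invoking the confluent analogue of Lemma \ref{thelemma} (the $\beta\to\infty$ limit of that identity, equivalently a Gauss-type ${}_2F_1(1)$ evaluation), but the limiting route above is shorter and reuses the theorem already proved.
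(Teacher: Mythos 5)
Your proposal is correct and follows essentially the same route as the paper: the paper's proof also obtains \eqref{LaguerreFKRecurrence} by taking $\lim_{\beta\to\infty}\beta^{j+1}b^{j}_{\vec n}$, $j\in\{0,1,\dots,p\}$, of the Jacobi--Piñeiro coefficients \eqref{JPRecurrence}, using the limits \eqref{LaguerreFKTypeI}--\eqref{LaguerreFKTypeII}, and declares the evaluation immediate. Your explicit bookkeeping of the $\beta$-powers and of the scaling of the recurrence relations simply spells out what the paper leaves as an immediate application.
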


\begin{proof}
The corresponding limits described in \eqref{LaguerreFKTypeI} and \eqref{LaguerreFKTypeII}, which establish the connection between the Jacobi--Piñeiro and Laguerre of first kind multiple orthogonal polynomials, imply the recurrence coefficients for the Laguerre polynomials can be obtained from the Jacobi--Piñeiro ones \eqref{JPRecurrence} just by applying the limit
\begin{align*}
 \begin{aligned}
 	& \lim_{\beta\rightarrow\infty}\beta^{j+1}b_{\vec{n}}^j,&j&\in\{0,1,\dots,p\}.
 \end{aligned}
\end{align*}
This application is immediate and yields the aforementioned expression.
\end{proof}

Now let's mantain the previous notation in \eqref{eq:alphas}
to write the step line coefficients as:
\begin{cor}[Step line Laguerre of the first kind recurrence coefficients]
 The Laguerre of first kind multiple orthogonal polynomials of type I, as in \eqref{LaguerreFKTypeI}, and of type II, as in \eqref{LaguerreFKTypeII}, in the step line satisfy a recurrence relation of the form \eqref{StepLineRecurrence} respect to the coefficients
\begin{align}
\label{LaguerreFKStepLineRecurrence} b_{{pm+k}}^j=&\sum_{i=k+1}^{p+k+1-j}\dfrac{(\alpha_i+m)\prod_{q=1}^p(\alpha_i-\alpha_q+m)}{\prod_{q=k+1,q\neq i}^{p+k+1-j}(\alpha_i-\alpha_q)} ,
\end{align}
for $j\in\{0,1,\dots,p\}$, $m\geq0$ and $k\in\{0,\dots,p-1\}$.
\end{cor}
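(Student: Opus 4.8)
The plan is to mirror the proof of the preceding theorem on the Laguerre‑of‑first‑kind nearest‑neighbour coefficients. The step‑line relabeling in \eqref{StepLineRecurrence} is purely notational and identical for the Jacobi--Piñeiro and the Laguerre‑of‑first‑kind families, and the limit that sends $P_{\vec n}$, $P^{(i)}_{\vec n}$ to $L_{\vec n}$, $L^{(i)}_{\vec n}$ descends to the recurrence coefficients as $b^j_{\vec n}(\text{Laguerre})=\lim_{\beta\to\infty}\beta^{j+1}b^j_{\vec n}(\text{Jacobi--Piñeiro})$ for all $j\in\{0,1,\dots,p\}$. Hence the Laguerre step‑line coefficient is obtained from the Jacobi--Piñeiro step‑line coefficient \eqref{JPStepLineRecurrence} by that same limit, and it only remains to evaluate $\lim_{\beta\to\infty}\beta^{j+1}b^j_{pm+k}$ with $b^j_{pm+k}$ given by \eqref{JPStepLineRecurrence}.

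For that, note every $\beta$‑dependent factor in \eqref{JPStepLineRecurrence} is a shifted Pochhammer $(\beta+c)_r$, a monic degree‑$r$ polynomial in $\beta$, so $\beta^{-r}(\beta+c)_r\to1$. Counting degrees, the prefactor has $\beta$‑degree $1+j+pj-(j+pj)=1$ (the single linear factor and the blocks $(\beta+\cdots)_j$ and $\prod_{q=1}^p(\cdots)_j$ upstairs against the $j$ linear factors and $\prod_{q=k+1}^{p+k}(\cdots)_j$ downstairs), while each summand of the $i$‑sum carries a denominator $(\alpha_i+\beta+\cdots)_{j+2}$ of $\beta$‑degree $j+2$ together with the $\beta$‑free factor $\tfrac{(\alpha_i+m)\prod_{q=1}^p(\alpha_i-\alpha_q+m)}{\prod_{q=k+1,q\neq i}^{p+k+1-j}(\alpha_i-\alpha_q)}$. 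Thus $b^j_{pm+k}\sim\beta^{1-(j+2)}=\beta^{-(j+1)}$, and since every monic‑in‑$\beta$ factor contributes $1$ in the limit, $\lim_{\beta\to\infty}\beta^{j+1}b^j_{pm+k}$ is precisely the sum in \eqref{LaguerreFKStepLineRecurrence}. One must only keep track of the extended labelling \eqref{eq:alphas}, which places some of the indices $i,q$ above $p$; since $\alpha_{p+q}=\alpha_q+1$, $\alpha_{2p+q}=\alpha_q+2$, \dots\ are mere real constants, this affects none of the $\beta$‑asymptotics, and the AT hypothesis $\alpha_i-\alpha_j\notin\mathbb Z$ keeps the $\beta$‑free denominators $\alpha_i-\alpha_q$ from vanishing.

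As an independent check (and a self‑contained route), one may instead specialise the nearest‑neighbour formula \eqref{LaguerreFKRecurrence} directly. The step line forces, at the node $\vec n=(\underbrace{m+1,\dots,m+1}_k,\underbrace{m,\dots,m}_{p-k})$, the permutation $\pi=(k,k-1,\dots,1,p,p-1,\dots,k+1)$, so that $\vec n-\vec s_j$ is the step‑line predecessor at distance $j$; then $S(\pi,j)=\{1,\dots,k+1-j\}\cup\{k+1,\dots,p\}$ for $j\le k$ and $S(\pi,j)=\{k+1,\dots,p+k+1-j\}$ for $j>k$, while $b^0_{pm+k}=b^0_{\vec n}(k+1)$ uses the direction $\vec e_{k+1}$. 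Plugging $n_i=m+1$ ($i\le k$) and $n_i=m$ ($i>k$) into \eqref{LaguerreFKRecurrence}, the quantities $\alpha_i-\alpha_q+n_i-n_q$ equal $0$ when $i,q$ are on the same side and $\pm1$ otherwise; rewriting those unit shifts and the shifted numerators through $\alpha_{p+q}=\alpha_q+1$ re‑indexes the $\{i\le k\}$ contributions (resp.\ the $\{q\le k+1-j\}$ factors) onto the block $\{p+1,\dots,p+k+1-j\}$ and collapses everything into the single sum over $i\in\{k+1,\dots,p+k+1-j\}$ of \eqref{LaguerreFKStepLineRecurrence}. Either way the computation is routine; the only genuine difficulty is the bookkeeping — making sure that all monic‑in‑$\beta$ factors cancel down to the claimed $\beta$‑free sum (first route) or that the extended notation absorbs every unit shift into one contiguous product range (second route). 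I would present the first route, being the shortest and parallel to the preceding proof.
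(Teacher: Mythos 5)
Your proposal is correct. The paper states this corollary without a written proof, presenting it as an immediate specialization of the nearest-neighbour theorem once the extended labelling \eqref{eq:alphas} is adopted; your second route (taking $\vec n=(m+1,\dots,m+1,m,\dots,m)$, the step-line permutation $\pi=(k,\dots,1,p,\dots,k+1)$, identifying $S(\pi,j)$, and absorbing the unit shifts $n_i-n_q=\pm1$ into the indices $\alpha_{p+q}=\alpha_q+1$) is exactly that argument carried out explicitly, and your index bookkeeping (including the $j=0$ case via the direction $\vec e_{k+1}$, which contributes the extra term $i=p+k+1$) is sound. Your first route, applying $\lim_{\beta\to\infty}\beta^{j+1}$ to the Jacobi--Piñeiro step-line formula \eqref{JPStepLineRecurrence} with the degree count $1-(j+2)=-(j+1)$ and monic-in-$\beta$ factors tending to $1$, is equally valid and parallels the paper's own proof of the nearest-neighbour Laguerre theorem; either route legitimately fills the gap the paper leaves implicit.
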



\section{Laguerre of Second Kind with $p$ Weights: Type I Polynomials and the Recurrence Coefficients}

The weight functions for this family are
\begin{align}
 \label{WeightsLaguerreSK}
 &\begin{aligned}
 	w_i(x;c_i,\alpha_0)&\coloneq x^{\alpha_0}\Exp{-c_ix}, & i&\in\{1,\dots,p\}, 
 \end{aligned}& \d\mu(x)&=\d x,& \Delta&=[0,\infty),
\end{align}
with $\alpha_0>-1, c_1,\ldots,c_p>0$ and, in order to have an AT system, $c_i\neq c_j$ for $i\neq j$.
The corresponding monic type II multiple orthogonal polynomials are, cf. \cite[\S 4]{AskeyII},
\begin{align}
 \label{LaguerreSKTypeII}
\begin{aligned}
	 L_{\vec{n}}(x;c_1,\ldots,c_p,\alpha_0)&=\lim_{t\rightarrow\infty}(-t)^{|\vec{n}|}P_{\vec{n}}\left(1-\dfrac{x}{t};c_1t,\dots,c_pt,\alpha_0\right)\\
 &=(-1)^{|\vec{n}|}(\alpha_0+1)_{|\vec{n}|}\sum_{l_1=0}^{n_1}\cdots\sum_{l_p=0}^{n_p}\dfrac{1}{(\alpha_0+1)_{l_1+\cdots+l_p}}\prod_{q=1}^p\dfrac{(-n_q)_{l_q}c_q^{l_q-n_q}}{l_q!}x^{l_1+\cdots+l_p}.
\end{aligned}
\end{align}

\subsection{Explicit Hypergeometric Expressions for the Type I polynomials}
\label{S.L2KtypeI}

In \cite[\S 8]{HahnI}, we established for a system with $p=2$ weight functions that they can be represented by the following hypergeometric expression in the form of a Kampé de Fériet series, as shown in \eqref{KF}:
\begin{multline*}
 L_{(n_1,n_2)}^{(i)}(x;c_1,c_2,\alpha_0)=\dfrac{(-1)^{{n}_i-1}(n_1+n_2-2)!}{(n_1-1)!(n_2-1)!\Gamma(\alpha_0+n_1+n_2)}c_i^{\alpha_0+n_1+n_2}\left(\dfrac{\hat{c}_i}{\hat{c}_i-c_i}\right)^{n_1+n_2-1}\\ \times\KF{2:0;0}{1:1;0}{-n_i+1,\,\alpha_0+1:--;--}{-n_1-n_2+2:\alpha_0+1;--}{\dfrac{(c_i-\hat{c}_i)c_i}{\hat{c}_i}x,-\dfrac{(c_i-\hat{c}_i)}{\hat{c}_i}},
\end{multline*}
for $i\in\{1,2\}$. Here we have defined $\hat{c}_i\coloneq\delta_{i,1}c_2+\delta_{i,2}c_1$.

An extension of this result to an arbitrary number of weights is provided by a multiple Kampé de Fériet series, as depicted in \eqref{MultipleKF}.
\begin{teo}[Explicit hypergeometric expressions for the type Laguerre of the second kind]
\label{LaguerreSKTypeITheorem}
The Laguerre of second kind multiple orthogonal polynomials of type I are
\begin{align}
\label{LaguerreSKTypeI}
\begin{aligned}
	&
L_{\vec{n}}^{(i)}(x;c_1,\dots,c_p,\alpha_0) =
\begin{multlined}[t][.67\textwidth]
	\dfrac{(-1)^{n_i-1}c_i^{\alpha_0+|\vec{n}|}(\alpha_0+|\vec{n}|-n_i+1)_{n_i-1}}{(n_i-1)!\Gamma(\alpha_0+|\vec{n}|)}\prod_{q=1,q\neq i}^{p}\left(\dfrac{c_q}{c_q-c_i}\right)^{n_q}\\
\times\KF{1:0;1;\cdots;1}{1:0;0;\cdots;0}{-n_i+1:--;\{n_q\}_{q\neq i}}{\alpha_0+|\vec{n}|-n_i+1:--;\cdots;--}{{c_i x},\left\{\dfrac{c_i}{c_i-c_q}\right\}_{q\neq i}}
\end{multlined}\\
& 
\hspace{1cm}
=\begin{multlined}[t][.6\textwidth]
	\dfrac{(-1)^{n_i-1}c_i^{\alpha_0+|\vec{n}|}(\alpha_0+|\vec{n}|-n_i+1)_{n_i-1}}{(n_i-1)!\Gamma(\alpha_0+|\vec{n}|)}\prod_{q=1,q\neq i}^{p}\left(\dfrac{c_q}{c_q-c_i}\right)^{n_q}
	\\ \times
\sum_{l_1=0}^{n_i-1}\sum_{l_2=0}^{n_i-1-l_1}\cdots\sum_{l_p=0}^{n_i-1-l_1-\cdots-l_{p-1}}\dfrac{(-n_i+1)_{l_1+\cdots+l_p}}{(\alpha_0+|\vec{n}|-n_i+1)_{l_1+\cdots+l_p}}\dfrac{c_i^{l_1+\cdots+l_p}}{l_{i}!}
 \prod_{q=1,q\neq i}^p\dfrac{(n_q)_{l_q}}{l_q!(c_i-c_q)^{l_q}}\,x^{l_i},
\end{multlined}
\end{aligned}
\end{align}
for $i\in\{1,\dots,p\}$, and can be obtained from the Jacobi--Piñeiro polynomials \eqref{JPTypeI} through the limit
\begin{align}
\label{LaguerreSKasJPLimitTypeI}
\begin{aligned}
	 L_{\vec{n}}^{(i)}(x;c_1,\dots,c_p,\alpha_0)&=\lim_{t\rightarrow\infty}\dfrac{(-1)^{|\vec{n}|-1}}{t^{\alpha_0+|\vec{n}|}}P_{\vec{n}}^{(i)}\left(1-\dfrac{x}{t}; c_1t,\dots,c_pt,\alpha_0\right),&i&\in\{1,\dots,p\}.
\end{aligned}
\end{align}
\end{teo}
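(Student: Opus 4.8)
The plan is to obtain Theorem~\ref{LaguerreSKTypeITheorem} directly from the known Jacobi--Piñeiro type~I expression \eqref{JPTypeI} by means of the confluence limit \eqref{LaguerreSKasJPLimitTypeI}, and to verify that this limit reproduces the claimed multiple Kampé de Fériet series. First I would take the closed form \eqref{JPTypeI} for $P_{\vec n}^{(i)}(x;\alpha_1,\dots,\alpha_p,\beta)$, substitute $\alpha_q\mapsto c_q t$ for $q\in\{1,\dots,p\}$ and $\beta\mapsto\alpha_0$, replace the argument $x$ by $1-x/t$, and multiply by $(-1)^{|\vec n|-1}t^{-\alpha_0-|\vec n|}$. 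The $_{p+1}F_p$ in \eqref{JPTypeI} has argument $1-x/t$; the standard device is to apply a Pfaff/Euler transformation (or, more simply, to rewrite the polynomial in powers of $1-(1-x/t)=x/t$) so that the series becomes one in the variable $x/t$, after which each coefficient has a clean $t\to\infty$ behaviour. Concretely, writing $P_{\vec n}^{(i)}$ in the monomial form $\sum_l C_{\vec n}^{(i),l}(1-x/t)^l$ and expanding $(1-x/t)^l$ binomially produces a double (then, summing over the $p-1$ ``lower'' parameters, a $p$-fold) series; collecting powers of $x$ is exactly the reindexing that turns the single hypergeometric sum into the multiple one displayed in \eqref{LaguerreSKTypeI}.

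The key asymptotic inputs are the elementary limits $\lim_{t\to\infty}(c_qt)_{n}/(c_q t)^{n}=1$, $\lim_{t\to\infty}(c_it-c_qt+a)_n/t^n=(c_i-c_q)^n$, and more generally $\lim_{t\to\infty}t^{-n}(\,\cdot\,)$ applied to each Pochhammer symbol in the prefactor and in the series coefficients of \eqref{JPTypeI}. I would carry these out termwise: (i) the prefactor $\prod_q(\alpha_q+\beta+|\vec n|)_{n_q}/\big[(n_i-1)!\prod_{q\neq i}(\alpha_q-\alpha_i)_{n_q}\big]\cdot\Gamma(\alpha_i+\beta+|\vec n|)/[\Gamma(\beta+|\vec n|)\Gamma(\alpha_i+1)]$ under $\alpha_q\mapsto c_qt$ contributes, after multiplication by $t^{-\alpha_0-|\vec n|}$, the factor $c_i^{\alpha_0+|\vec n|}(\alpha_0+|\vec n|-n_i+1)_{n_i-1}/[(n_i-1)!\,\Gamma(\alpha_0+|\vec n|)]\prod_{q\neq i}(c_q/(c_q-c_i))^{n_q}$ — here one uses $\Gamma(\alpha_i+\beta+|\vec n|)/\Gamma(\alpha_i+1)\sim (c_it)^{\alpha_0+|\vec n|-1}$ and $\prod_{q\neq i}(\alpha_q-\alpha_i)_{n_q}\sim\prod_{q\neq i}\big((c_q-c_i)t\big)^{n_q}$, matching the $c$-powers; (ii) inside the series, the Pochhammer $(\alpha_i+\beta+|\vec n|)_l\mapsto(c_it)^l\big(1+O(1/t)\big)$ combines with the $(x/t)^{l}$ from the argument to give $(c_ix)^l$, while each ratio $(\alpha_i-\alpha_q-n_q+1)_l/(\alpha_i-\alpha_q+1)_l$ tends to $1$ but, after the binomial re-expansion in $x/t$, the surviving $t$-powers reorganise into the factors $c_i^{l_1+\cdots+l_p}\prod_{q\neq i}(n_q)_{l_q}/(c_i-c_q)^{l_q}$ and the denominator $(\alpha_0+|\vec n|-n_i+1)_{l_1+\cdots+l_p}$ coming from $(\alpha_i+1)_l\mapsto(c_it)^l$ rescaled against the prefactor; (iii) the truncation ranges $l_k\le n_i-1-l_1-\cdots-l_{k-1}$ are forced by the factor $(-n_i+1)_{l_1+\cdots+l_p}$, which vanishes once $l_1+\cdots+l_p\ge n_i$.

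I would then separately confirm the type~II formula \eqref{LaguerreSKTypeII} — already stated in the literature — follows from the same substitution applied to \eqref{JPTypeII}, so that the pair $\big(L_{\vec n},L_{\vec n}^{(i)}\big)$ inherits the biorthogonality \eqref{biorthogonality}, hence the AT-system orthogonality, in the confluent limit; alternatively one checks the type~I orthogonality \eqref{ortogonalidadTipoIContinua} directly against the weights \eqref{WeightsLaguerreSK}, using that $\int_0^\infty x^{\alpha_0+k}e^{-c_ix}\,\d x=\Gamma(\alpha_0+k+1)c_i^{-\alpha_0-k-1}$ is the confluent limit of the Jacobi--Piñeiro moment \eqref{MomentJP} after the change of variables $x\mapsto 1-x/t$. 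Finally, rewriting the resulting $p$-fold series in the standard notation \eqref{MultipleKF} with $n=q=1$ upper/lower ``shared'' parameters $-n_i+1$ and $\alpha_0+|\vec n|-n_i+1$, no parameters in the first block and one numerator parameter $n_q$ in each of the remaining $p-1$ blocks, yields exactly the displayed $\KF{1:0;1;\cdots;1}{1:0;0;\cdots;0}{\cdots}{\cdots}{\cdots}$.

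The main obstacle I anticipate is bookkeeping the powers of $t$ cleanly: the argument $1-x/t$ mixes the hypergeometric variable with the large parameter, so one must simultaneously track the decay from $(x/t)^l$, the growth from Pochhammer symbols containing $c_it$ or $(c_i-c_q)t$, and the $t^{-\alpha_0-|\vec n|}$ normalisation, and show these conspire to a finite nonzero limit with precisely the stated constant. Making the binomial re-expansion of $(1-x/t)^l$ and the interchange of the (finite) sums rigorous is routine, but identifying the surviving combination as the multiple Kampé de Fériet series requires a careful relabelling of summation indices; I would handle this by first doing the $p=2$ case to recover the formula from \cite[\S 8]{HahnI} as a sanity check, then writing the general re-indexing once and for all.
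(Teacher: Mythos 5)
Your overall strategy coincides with the paper's: identify the limit \eqref{LaguerreSKasJPLimitTypeI} as the genuine type I polynomials by checking orthogonality against the weights \eqref{WeightsLaguerreSK}, and then compute the limit from the explicit Jacobi--Piñeiro form \eqref{JPTypeI}. However, at the central computational step your proposal has a genuine gap: the termwise asymptotics you describe in item (ii) do not produce the claimed coefficients. After the substitutions, the prefactor (times $t^{-\alpha_0-|\vec n|}$) grows like $t^{\,n_i-1}$, while inside the sum the ratios $(c_it+\alpha_0+|\vec n|)_l/(c_it+1)_l$ and $\bigl((c_i-c_q)t-n_q+1\bigr)_l/\bigl((c_i-c_q)t+1\bigr)_l$ all tend to $1$; taking only these leading terms and expanding $(1-x/t)^l$ binomially gives $\sum_l\frac{(-n_i+1)_l}{l!}(1-x/t)^l=(x/t)^{\,n_i-1}$, i.e.\ only the top-degree monomial of $L^{(i)}_{\vec n}$ survives. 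Every lower coefficient of the polynomial comes from the $O(1/t)$ corrections of those Pochhammer ratios, mixed to total order $t^{-(n_i-1)}$ with lower powers of $x/t$; asserting that these "surviving $t$-powers reorganise" into $c_i^{l_1+\cdots+l_p}\prod_{q\neq i}(n_q)_{l_q}/(c_i-c_q)^{l_q}$ and $(\alpha_0+|\vec n|-n_i+1)_{l_1+\cdots+l_p}$ is precisely what must be proved, and your description of the mechanism is not correct as stated (the numerator $(\alpha_i+\beta+|\vec n|)_l$ cannot be detached from its companion denominator $(\alpha_i+1)_l$ to yield $(c_ix)^l$, since the argument is $1-x/t$, not $x/t$). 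The paper supplies the missing device: each ratio is rewritten \emph{exactly} as a terminating sum via the Gauss/Chu--Vandermonde evaluation $\sum_{k}\frac{(-l)_k(b)_k}{k!\,(c)_k}=\frac{(c-b)_l}{(c)_l}$, e.g.\ $\frac{((c_i-c_q)t-n_q+1)_l}{((c_i-c_q)t+1)_l}=\sum_{k_q=0}^{l}\frac{(-l)_{k_q}(n_q)_{k_q}}{k_q!\,((c_i-c_q)t+1)_{k_q}}$, which exposes every power of $1/t$; the $l$-sum is then closed with Newton's binomial formula, and the reindexing $k_1\to n_i-1-l_1$, $k_i\to l_{i-1}-l_i$ yields \eqref{LaguerreSKTypeI}. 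Without this (or an equivalent exact expansion) your plan does not go through.

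A second, smaller omission: for the identification step you offer either inheriting biorthogonality from the type II limit or checking \eqref{ortogonalidadTipoIContinua} directly, but both routes require interchanging the limit $t\to\infty$ with an integral over an unbounded interval. The paper justifies this via dominated convergence, using $\bigl(1-\tfrac{x}{t}\bigr)^{c_it}I_{[0,t)}\le \Exp{-c_ix}$ together with the boundedness in $t$ of the rescaled polynomial (which itself relies on the existence of the limit you are computing). You should make this interchange explicit rather than appealing to "the confluent limit of the moments," since orthogonality for all $t$ does not automatically pass to the limit without such a bound.
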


\begin{proof}

We will divide the proof into two parts. Firstly, we aim to demonstrate that the polynomials defined by the previous limit indeed correspond to the Laguerre of the second kind, type I polynomials. This can be achieved by verifying that they satisfy the orthogonality conditions given in \eqref{ortogonalidadTipoIContinua} with respect to the weight functions outlined in \eqref{WeightsLaguerreSK}.

The Jacobi--Piñeiro type I polynomials satisfy the orthogonality conditions
\begin{align*}
 \sum_{i=1}^p\int_{0}^1 (x-1)^k P_{\vec{n}}^{(i)}(x;\alpha_1,\dots,\alpha_p,\beta) x^{\alpha_i}(1-x)^{\beta}\d x=\begin{cases}
 0,\;\text{if}\; k\in\{0,\dots,|\vec{n}|-2\},\\
 1,\;\text{if}\; k=|\vec{n}|-1.
 \end{cases}
\end{align*}
Under the change of variables
\begin{align*}
\begin{aligned}
	 x&\rightarrow 1-\dfrac{x}{t},& \alpha_i&\rightarrow c_it, & \beta&\rightarrow\alpha_0 ,
\end{aligned}
\end{align*}
the previous expression becomes
\begin{align*}
 \sum_{i=1}^p
 \int_{0}^t x^k \dfrac{(-1)^{|\vec{n}|-1}}{t^{\alpha_0+|\vec{n}|}}P_{\vec{n}}^{(i)}\left(1-\dfrac{x}{t};c_1 t,\dots,c_p t,\alpha_0\right) x^{\alpha_0}\left(1-\dfrac{x}{t}\right)^{c_it}\d x=\begin{cases}
 0,\;\text{if}\;k\in\{0,\dots,|\vec{n}|-2\},\\
 1,\;\text{if}\; k=|\vec{n}|-1.
 \end{cases}
\end{align*}

Now, our objective is to apply the limit as $t\rightarrow\infty$ to both sides of the preceding equation. To ensure that the limit can be interchanged with the integral, we need to establish that the modulus of the expression within the integral is bounded by an integrable function for all $t>0$.

On one hand, we have that
\begin{align*}
\begin{aligned}
	 \left(1-\dfrac{x}{t}\right)^{c_it} I_{[0,t)}&\leq \mathrm e^{-c_i x},&t&>0, & x&\in\mathbb R^+.
\end{aligned}
\end{align*}
On the other hand, the polynomial
\begin{align*}
\dfrac{(-1)^{|\vec{n}|-1}}{t^{\alpha_0+|\vec{n}|}}P_{\vec{n}}^{(i)}\left(1-\dfrac{x}{t};c_1 t,\dots,c_p t,\alpha_0\right)
\end{align*}
is bounded $ t>0$ since the limit $t\to\infty$ exists as we will show.
So the expression is bounded and, by Lebesgue's dominated convergence, we can interchange the limit to get that
\begin{align*}
 \sum_{i=1}^p
 \int_{0}^\infty x^k
 \underbrace{\lim_{t\rightarrow\infty} \dfrac{(-1)^{|\vec{n}|-1}}{t^{\alpha_0+|\vec{n}|}}P_{\vec{n}}^{(i)}\left(1-\dfrac{x}{t};c_1 t,\dots,c_p t,\alpha_0\right)}_{=L^{(i)}_{\vec{n}}(x;c_1,\ldots,c_p,\alpha_0)\;\text{by definition}} \underbrace{\lim_{t\rightarrow\infty}x^{\alpha_0}\left(1-\dfrac{x}{t}\right)^{c_it}}_{=x^{\alpha_0} \exp(-c_i x)}\d x=\begin{cases}
 0,\;\text{if}\; k\in\{0,\dots,|\vec{n}|-2\},\\
 1,\;\text{if}\; k=|\vec{n}|-1.
 \end{cases}
\end{align*}

Once we have proven this, the second part of the proof will consist on calculate such limit \eqref{LaguerreSKasJPLimitTypeI} to arrive to mentioned expression \eqref{LaguerreSKTypeI}.

Remember the Jacobi--Piñeiro type I polynomials are
\begin{multline*}
 P^{(i)}_{\vec{n}}(x;\alpha_1,\dots,\alpha_p,\beta) =(-1)^{|\vec{n}|-1}\dfrac{\prod_{q=1}^{p}(\alpha_q+\beta+|\vec{n}|)_{n_q}}{(n_i-1)!\prod_{q=1,q\neq i}^{p}(\alpha_q-\alpha_i)_{n_q}} \dfrac{\Gamma(\alpha_i+\beta+|\vec{n}|)}{\Gamma(\beta+|\vec{n}|)\Gamma(\alpha_i+1)}\\\times \sum_{l=0}^{n_i-1}{\dfrac{(-n_i+1)_l(\alpha_i+\beta+|\vec{n}|)_l \prod_{q=1,q\neq i}^p(\alpha_i-\alpha_q-n_q+1)_l}{l!(\alpha_i+1)_l\prod_{q=1,q\neq i}^p(\alpha_i-\alpha_q+1)_l}}x^l.
\end{multline*}
Through the corresponding changes of variables we get
\begin{multline*}
 L_{\vec{n}}^{(i)}(x;c_1,\dots,c_p,\alpha_0)=\lim_{t\rightarrow\infty}\dfrac{(-1)^{|\vec{n}|-1}}{t^{\alpha_0+|\vec{n}|}}P^{(i)}_{\vec{n}}\left(1-\dfrac{x}{t};c_1t,\dots,c_pt,\alpha_0\right)\\
 =\lim_{t\rightarrow\infty}\dfrac{1}{(n_i-1)!\Gamma(\alpha_0+|\vec{n}|)}\dfrac{1}{t^{\alpha_0+|\vec{n}|}}\dfrac{\prod_{q=1}^{p}(c_qt+\alpha_0+|\vec{n}|)_{n_q}}{\prod_{q=1,q\neq i}^{p}(c_qt-c_it)_{n_q}} \dfrac{\Gamma(c_it+\alpha_0+|\vec{n}|)}{\Gamma(c_it+1)}\\
 \times\sum_{l=0}^{n_i-1}
 \dfrac{(-n_i+1)_l}{l!}
 {\dfrac{(c_it+\alpha_0+|\vec{n}|)_l}{(c_it+1)_l}}
 \prod_{q=1,q\neq i}^p
 {\dfrac{(c_it-c_qt-n_q+1)_l}{(c_it-c_qt+1)_l}}
 {\left(1-\dfrac{x}{t}\right)^l}.
\end{multline*}
We can use now Gauss's hypergeometric formula
\begin{align*}
\begin{aligned}
	 \pFq{2}{1}{-n,b}{c}{1}&=\sum_{l=0}^{n}\dfrac{(-n)_l}{l!}\dfrac{(b)_l}{(c)_l}=\dfrac{(c-b)_n}{(c)_n},& n&\in\mathbb N_0,
\end{aligned}
\end{align*}
to write the previous fractions within the sum as
\begin{align*}
{\dfrac{(c_it+\alpha_0+|\vec{n}|)_l}{(c_it+1)_l}}&={\sum_{k_i=0}^{l}\frac{(-l)_{k_i}(-\alpha_0-|\vec{n}|+1)_{k_i}}{k_i!(c_it+1)_{k_i}}},\\
{\dfrac{(c_it-c_qt-n_q+1)_l}{(c_it-c_qt+1)_l}}&={\sum_{k_q=0}^{l}\frac{(-l)_{k_q}(n_q)_{k_q}}{k_q!(c_it-c_qt+1)_{k_q}}},
\end{align*}
which leads to
\begin{multline*}
 L_{\vec{n}}^{(i)}(x;c_1,\dots,c_p,\alpha_0)
=\lim_{t\rightarrow\infty}\dfrac{1}{(n_i-1)!\Gamma(\alpha_0+|\vec{n}|)}\dfrac{1}{t^{\alpha_0+|\vec{n}|}}\dfrac{\prod_{q=1}^{p}(c_qt+\alpha_0+|\vec{n}|)_{n_q}}{\prod_{q=1,q\neq i}^{p}(c_qt-c_it)_{n_q}} \dfrac{\Gamma(c_it+\alpha_0+|\vec{n}|)}{\Gamma(c_it+1)}\\
 \times\sum_{l=0}^{n_i-1}
 \dfrac{(-n_i+1)_l}{l!}
 {\sum_{k_i=0}^{l}\frac{(-l)_{k_i}(-\alpha_0-|\vec{n}|+1)_{k_i}}{k_i!(c_it+1)_{k_i}}}
 \prod_{q=1,q\neq i}^p
{\sum_{k_q=0}^{l}\frac{(-l)_{k_q}(n_q)_{k_q}}{k_q!(c_it-c_qt+1)_{k_q}}}
 {\left(1-\dfrac{x}{t}\right)^l}.
\end{multline*}
Note that the factors outside the sum behave asymptotically for $t\to\infty$ as
\begin{align*}
 \dfrac{c_i^{\alpha_0+|\vec{n}|-1}}{(n_i-1)!\Gamma(\alpha_0+|\vec{n}|)}\dfrac{\prod_{q=1}^{p}c_q^{n_q}}{\prod_{q=1,q\neq i}^{p}(c_q-c_i)^{n_q}}\,t^{n_i-1}+O\left(t^{n_i-2}\right),
\end{align*}
while the multiple sum has an asymptotic expansion for $t\to\infty$:
\begin{multline*}
 \sum_{k_1=0}^{n_i-1}\cdots\sum_{k_p=0}^{n_i-1-k_1-\cdots-k_{p-1}}
 \dfrac{1}{k_1!\cdots k_p!}\dfrac{(-\alpha_0-|\vec{n}|+1)_{k_i}}{c_i^{k_i}t^{k_i}}
 \\ \times \prod_{q=1,q\neq i}^p\dfrac{(n_q)_{k_q}}{(c_i-c_q)^{k_q}t^{k_q}}
 \sum_{l=0}^{n_i-1}\dfrac{(-n_i+1)_l}{l!}(-l)_{k_1+\cdots+k_p}\left(1-\dfrac{x}{t}\right)^l+ O\left(\dfrac{1}{t^{n_i}}\right).
\end{multline*}
The Pochhammer product $(-l)_{k_1}\cdots(-l)_{k_p}$ equals $(-l)_{k_1+\cdots+k_p}$ plus less order Pochhammer symbols, whose contributions have been included within $O\left(\frac{1}{t^{n_i}}\right)$.

Now Newton's binomial formula leads to:
\begin{align*}
 \sum_{l=0}^{n_i-1}&\dfrac{(-n_i+1)_l}{l!}(-l)_{k_1+\cdots+k_p}\left(1-\dfrac{x}{t}\right)^l\\
 &= (-1)^{k_1+\cdots+k_p}(-n_i+1)_{k_1+\cdots+k_p}\left(1-\dfrac{x}{t}\right)^{k_1+\cdots+k_p}\sum_{l=0}^{n_i-1-k_1-\cdots-k_p}\dbinom{n_i-1-k_1-\cdots-k_p}{l}\left(\dfrac{x}{t}-1\right)^{l}\\
 &=(-1)^{k_1+\cdots+k_p}(-n_i+1)_{k_1+\cdots+k_p}\left(1-\dfrac{x}{t}\right)^{k_1+\cdots+k_p}\left(\dfrac{x}{t}\right)^{n_i-1-k_1-\cdots-k_p}.
\end{align*}
Replacing this sum in the previous expression we find the following asymptotic expansion for $t\to\infty$
\begin{multline*}
 \dfrac{1}{t^{n_i-1}}\sum_{k_1=0}^{n_i-1}\cdots\sum_{k_p=0}^{n_i-1-k_1-\cdots-k_{p-1}}\dfrac{(-n_i+1)_{k_1+\cdots+k_p}}{k_1!\cdots k_p!}\dfrac{(-\alpha_0-|\vec{n}|+1)_{k_i}}{(-c_i)^{k_i}}\\\times \prod_{q=1,q\neq i}^p\dfrac{(n_q)_{k_q}}{(c_q-c_i)^{k_q}}
 \left(1-\dfrac{x}{t}\right)^{k_1+\cdots+k_p}{x}^{n_i-1-k_1-\cdots-k_p}+ O\left(\dfrac{1}{t^{n_i}}\right).
\end{multline*}
Combining the common factor with the sums we find that the whole expression behaves asymptotically for $t\to\infty$ as follows:
\begin{multline*}
\dfrac{c_i^{\alpha_0+|\vec{n}|-1}}{(n_i-1)!\Gamma(\alpha_0+|\vec{n}|)}\dfrac{\prod_{q=1}^{p}c_q^{n_q}}{\prod_{q=1,q\neq i}^{p}(c_q-c_i)^{n_q}}
\sum_{k_1=0}^{n_i-1}\cdots\sum_{k_p=0}^{n_i-1-k_1-\cdots-k_{p-1}}\dfrac{(-n_i+1)_{k_1+\cdots+k_p}}{k_1!\cdots k_p!}\dfrac{(-\alpha_0-|\vec{n}|+1)_{k_i}}{(-c_i)^{k_i}}\\\times\prod_{q=1,q\neq i}^p\dfrac{(n_q)_{k_q}}{(c_q-c_i)^{k_q}}
x^{n_i-1-k_1-\cdots-k_p}\left(1-\dfrac{x}{t}\right)^{k_1+\cdots+k_p}+ O\left(\dfrac{1}{t}\right).
\end{multline*}
Now, is trivial to apply the limit $t\rightarrow\infty$  finding that 
\begin{multline*}
L_{\vec{n}}^{(i)}(x;c_1,\dots,c_p,\alpha_0)
=\dfrac{c_i^{\alpha_0+|\vec{n}|-1}}{(n_i-1)!\Gamma(\alpha_0+|\vec{n}|)}\dfrac{\prod_{q=1}^{p}c_q^{n_q}}{\prod_{q=1,q\neq i}^{p}(c_q-c_i)^{n_q}}\\
\times\sum_{k_1=0}^{n_i-1}\cdots\sum_{k_p=0}^{n_i-1-k_1-\cdots-k_{p-1}}\dfrac{(-n_i+1)_{k_1+\cdots+k_p}}{k_1!\cdots k_p!}\dfrac{(-\alpha_0-|\vec{n}|+1)_{k_i}}{(-c_i)^{k_i}}\prod_{q=1,q\neq i}^p\dfrac{(n_q)_{k_q}}{(c_q-c_i)^{k_q}}
x^{n_i-1-k_1-\cdots-k_p}.
\end{multline*}
A more convenient form of this limit one appears by the index changes $k_1\rightarrow n_i-1-l_1$, $k_i\rightarrow l_{i-1}-l_i$, $i\in\{2,\ldots,p\}$. Hence, the sums transforms to desired expression \eqref{LaguerreSKTypeI}.
\end{proof}

For $p=2$ and $i\in\{1,2\}$, the previous expression reads
\begin{align*}
 L_{(n_1,n_2)}^{(i)}(x;c_1,c_2,\alpha_0){=}&\dfrac{(-1)^{{n}_i-1}(\alpha_0+\hat{n}_i+1)_{n_i-1}}{(n_i-1)!\Gamma(\alpha_0+n_1+n_2)}c_i^{\alpha_0+n_1+n_2}\left(\dfrac{\hat{c}_i}{\hat{c}_i-c_i}\right)^{\hat{n}_i}\KF{1:0;1}{1:0;0}{-n_i+1:--;\hat{n}_i}{\alpha_0+\hat{n}_i+1:--;--}{c_ix,\dfrac{c_i}{c_i-\hat{c}_i}},
\end{align*}
where, $\hat{c}_i\coloneq\delta_{i,1}c_2+\delta_{i,2}c_1$ and $\hat{n}_i\coloneq\delta_{i,1}n_2+\delta_{i,2}n_1=n_1+n_2-n_i$ . 
This is not the expression found at \cite[\S 8]{HahnI}. This leads us to the following hypergeometric formula connecting Kampé de Fériet series.

\begin{cor}
Let be $n_i,\hat{n}_i\in\mathbb N$; $x\in\mathbb R^+$; $\alpha_0>-1$ and $c_i,\hat{c}_i>0$ with $c_i\neq\hat{c}_i$ then 
 \begin{multline*}
 \KF{1:0;1}{1:0;0}{-n_i+1:--;\hat{n}_i}{\alpha_0+\hat{n}_i+1:--;--}{c_ix,\dfrac{c_i}{c_i-\hat{c}_i}}\\
 \\=
 \dfrac{(\hat{n}_i)_{n_i-1}}{(\alpha_0+\hat{n}_i+1)_{n_i-1}}\left(\dfrac{\hat{c}_i}{\hat{c}_i-c_i}\right)^{n_i-1}\KF{2:0;0}{1:1;0}{-n_i+1,\,\alpha_0+1:--;--}{-n_1-n_2+2:\alpha_0+1;--}{\dfrac{(c_i-\hat{c}_i)c_i}{\hat{c}_i}x,-\dfrac{(c_i-\hat{c}_i)}{\hat{c}_i}}.
\end{multline*}
\end{cor}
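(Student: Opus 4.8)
The plan is to read the identity off by equating two closed forms for the same polynomial. It is enough to prove it for $i=1$: rename the two free positive integers of the statement as $n_1\coloneq n_i$ and $n_2\coloneq\hat{n}_i$, so that $\hat{n}_1=n_2=\hat{n}_i$ and $\hat{c}_1=c_2=\hat{c}_i$; the case $i=2$ is the same identity after the relabeling $1\leftrightarrow 2$, which merely swaps the names of the two free parameters.

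With this convention, the $p=2$ specialization of Theorem~\ref{LaguerreSKTypeITheorem} (the display immediately preceding the corollary) and the expression recalled from \cite[\S 8]{HahnI} at the beginning of \S\ref{S.L2KtypeI} are two explicit formulas for one and the same polynomial $L^{(i)}_{(n_1,n_2)}(x;c_1,c_2,\alpha_0)$, because for an AT system the type I multiple orthogonal polynomials are uniquely determined. Equating them and cancelling the common factors $(-1)^{n_i-1}$, $\Gamma(\alpha_0+n_1+n_2)^{-1}$ and $c_i^{\alpha_0+n_1+n_2}$ produces an identity of the shape (first Kampé de Fériet series) $=$ (explicit constant) $\times$ (second Kampé de Fériet series).

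It then remains to verify that the constant is the one claimed. Using $\hat{n}_i=n_1+n_2-n_i$, the surviving power of $\hat{c}_i/(\hat{c}_i-c_i)$ has exponent $n_1+n_2-1-\hat{n}_i=n_i-1$, and with $(n_1,n_2)=(n_i,\hat{n}_i)$ the combinatorial prefactor collapses via
\[
 \dfrac{(n_1+n_2-2)!\,(n_i-1)!}{(n_1-1)!\,(n_2-1)!\,(\alpha_0+\hat{n}_i+1)_{n_i-1}}
 =\dfrac{(n_2)_{n_1-1}}{(\alpha_0+n_2+1)_{n_1-1}}
 =\dfrac{(\hat{n}_i)_{n_i-1}}{(\alpha_0+\hat{n}_i+1)_{n_i-1}},
\]
which is exactly the constant in the statement. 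I do not expect any genuine obstacle: the whole content sits in Theorem~\ref{LaguerreSKTypeITheorem} together with \cite[\S 8]{HahnI}, and the only care needed is bookkeeping — keeping straight which of $c_1,c_2$ (respectively $n_1,n_2$) plays the role of $c_i$ (respectively $n_i$) and which plays the role of $\hat{c}_i$ (respectively $\hat{n}_i$), and making the reduction to $i=1$ with $(n_1,n_2)=(n_i,\hat{n}_i)$ explicit at the outset so that both free integer parameters of the stated identity are accounted for.
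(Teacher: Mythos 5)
Your proposal is correct and is essentially the paper's own (implicit) argument: the corollary is read off by equating the $p=2$ specialization of Theorem~\ref{LaguerreSKTypeITheorem} with the Kampé de Fériet expression recalled from \cite[\S 8]{HahnI} for the same uniquely determined type I polynomial $L^{(i)}_{(n_1,n_2)}(x;c_1,c_2,\alpha_0)$, and then simplifying the ratio of prefactors. Your bookkeeping is right: the exponent is $n_1+n_2-1-\hat{n}_i=n_i-1$ and, for $i=1$, $\tfrac{(n_1+n_2-2)!}{(n_2-1)!}=(\hat{n}_i)_{n_i-1}$, which yields exactly the stated constant.
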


\subsection{Explicit Expressions for the Recurrence Coefficients}\label{S.L2Krecurrence}

Now, we will proceed to determine the corresponding recurrence coefficients:
\begin{teo}
 The Laguerre of the second kind multiple orthogonal polynomials, both type I as described in \eqref{LaguerreSKTypeI} and type II as in \eqref{LaguerreSKTypeII}, adhere to their respective nearest-neighbor recurrence relations, as depicted in \eqref{NextNeighbourRecurrence}, with respect to the coefficients,
\begin{align}
\label{LaguerreSKRecurrence}
 \begin{aligned}
 	 b^0_{\vec{n}}(k)&=\dfrac{\alpha_0+|\vec{n}|+1}{c_k}+\sum_{i=1}^p\dfrac{n_i}{c_i},\\
b^j_{\vec{n}}&=(-1)^{j+1}(\alpha_0+|\vec{n}|-j+1)_{j}\sum_{i\in 
S(\pi,j)}\dfrac{n_i}{c_i^{j+1}}\prod_{q\in S^{\textsf c}(\pi,j)}\dfrac{c_i-c_q}{c_q},& j&\in\{1,\dots,p\}.
\end{aligned}
\end{align}
\end{teo}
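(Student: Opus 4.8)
The natural route mirrors the two theorems already proved for Jacobi–Piñeiro and Laguerre of the first kind: rather than working directly with the Laguerre-of-the-second-kind moments, I would obtain \eqref{LaguerreSKRecurrence} from the Jacobi–Piñeiro recurrence coefficients \eqref{JPRecurrence} by the same limiting procedure \eqref{LaguerreSKasJPLimitTypeI}–\eqref{LaguerreSKTypeII} that produces the polynomials themselves. Concretely, the change of variables $x\mapsto 1-x/t$, $\alpha_i\mapsto c_i t$, $\beta\mapsto\alpha_0$ conjugates the Jacobi–Piñeiro three-term-type recurrence \eqref{NextNeighbourRecurrence} into a recurrence for the rescaled polynomials $(-1)^{|\vec n|-1}t^{-\alpha_0-|\vec n|}P^{(i)}_{\vec n}(1-x/t;\dots)$ and $(-t)^{|\vec n|}P_{\vec n}(1-x/t;\dots)$. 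Tracking how the shift operators $B_{\vec n\pm\vec e_k}$, $B_{\vec n-\vec s_j}$ transform under this substitution — each step down in $|\vec n|$ contributes a factor of $-t$ from the normalization and the multiplication operator $x$ picks up the affine change — one finds that the surviving limits are
\begin{align*}
 b^0_{\vec n}(k;c_1,\dots,c_p,\alpha_0)&=\lim_{t\to\infty}(-t)\bigl(b^0_{\vec n}(k;c_1t,\dots,c_pt,\alpha_0)-1\bigr),\\
 b^j_{\vec n}(c_1,\dots,c_p,\alpha_0)&=\lim_{t\to\infty}(-t)^{j+1}\,b^j_{\vec n}(c_1t,\dots,c_pt,\alpha_0),\qquad j\in\{1,\dots,p\}.
\end{align*}

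**Carrying out the limit.** With these formulas in hand the proof reduces to a (careful but routine) asymptotic evaluation of the right-hand sides of \eqref{JPRecurrence} as $\beta\mapsto\alpha_0$ is held fixed and $\alpha_q\mapsto c_q t\to\infty$. For $b^0_{\vec n}(k)$ one expands the first term: $\tfrac{\alpha_k+n_k+1}{\alpha_k+\beta+n_k+|\vec n|+2}\to 1$ and $\tfrac{\alpha_k-\alpha_q+n_k+1}{\alpha_k-\alpha_q+n_k+1-n_q}\to1$, so the leading contribution is $1$, and the $(-t)(\,\cdot-1)$ combination extracts the $O(1/t)$ correction, which is $(\alpha_0+|\vec n|+1)/c_k+\sum_i n_i/c_i$ after collecting the subleading terms of both the first term and the sum over $i$ (each summand of the second sum is itself $O(1/t)$ and contributes $n_i/c_i$ in the limit, since the $\beta$-dependent factor $(\alpha_k+\beta+n_k+|\vec n|+1)/(\alpha_i+\beta+n_i+|\vec n|)_2$ behaves like $1/t$ while the ratios of $\alpha$-differences tend to $1$). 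For $b^j_{\vec n}$, $j\ge1$, the prefactor $(\beta+|\vec n|-j+1)_j\prod_q(\alpha_q+\beta+|\vec n|-j+1)_j/(\alpha_q+\beta+|\vec n|-j+n_q+1)_j$ contributes $(\alpha_0+|\vec n|-j+1)_j$ from the first Pochhammer (the $\beta=\alpha_0$ one) times $1$ from each ratio; the factor $1/(\alpha_i+\beta+n_i+|\vec n|-j)_{j+2}\sim (c_it)^{-(j+2)}$; the numerator $\prod_q(\alpha_i-\alpha_q+n_i)\sim\prod_{q\neq i}(c_i-c_q)t^{p-1}\cdot(c_it)$ contributes the powers of $t$ and the factors $c_i(c_i-c_q)$; and in the denominator $\prod_{q\in S,q\neq i}(\alpha_i-\alpha_q+n_i-n_q)\sim\prod_{q\in S,q\neq i}(c_i-c_q)t$ while $\prod_{q\in S^{\mathsf c}}(\alpha_q+\beta+|\vec n|-j+n_q)\sim\prod_{q\in S^{\mathsf c}}c_q t$. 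Assembling powers of $t$ one checks the net power is exactly $-(j+1)$, so multiplying by $(-t)^{j+1}$ gives a finite limit; the sign $(-1)^{j+1}$ and the surviving $c$-dependence collapse — after the $\prod_{q\in S,q\neq i}(c_i-c_q)$ factors cancel between numerator and denominator — to $(-1)^{j+1}(\alpha_0+|\vec n|-j+1)_j\sum_{i\in S(\pi,j)}\tfrac{n_i}{c_i^{j+1}}\prod_{q\in S^{\mathsf c}(\pi,j)}\tfrac{c_i-c_q}{c_q}$, which is \eqref{LaguerreSKRecurrence}.

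**Justifying that the limit gives the right recurrence.** One subtlety must be addressed before the computation above is conclusive: I must argue that the polynomials defined by \eqref{LaguerreSKasJPLimitTypeI} together with the type II polynomials \eqref{LaguerreSKTypeII} actually satisfy a recurrence of the form \eqref{NextNeighbourRecurrence} with the limiting coefficients — i.e. that the limit of the recurrence is the recurrence of the limit. Since the Jacobi–Piñeiro polynomials satisfy \eqref{NextNeighbourRecurrence} identically in the parameters, applying the substitution and normalization termwise yields an identity among the rescaled polynomials with $t$-dependent coefficients; the type II polynomials \eqref{LaguerreSKTypeII} and type I polynomials \eqref{LaguerreSKTypeI} are the (pointwise, coefficient-wise) limits of these rescaled families, and each recurrence relation involves only finitely many terms, so passing to the limit is legitimate provided each coefficient limit exists — which is exactly what the asymptotic analysis establishes. (Alternatively, and perhaps more cleanly, one may verify \eqref{LaguerreSKRecurrence} directly: substitute the moments $\int_0^\infty x^{\alpha_0+k}e^{-c_ix}\,\d x=\Gamma(\alpha_0+k+1)/c_i^{\alpha_0+k+1}$ into \eqref{General_Coef0}–\eqref{General_Coefj}, use the closed form \eqref{LaguerreSKTypeII} to evaluate $\sum_{l_1,\dots,l_p}C^{l_1,\dots,l_p}_{\vec n}\,c_i^{-(l_1+\cdots+l_p)}x^{\cdots}$-type sums via the binomial theorem, and insert the type I coefficients read off from \eqref{LaguerreSKTypeI}.)

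**Main obstacle.** The one genuinely delicate point is the bookkeeping of powers of $t$ in the $b^j_{\vec n}$ asymptotics: the Jacobi–Piñeiro expression \eqref{JPRecurrence} contains nested products indexed by $S(\pi,j)$ and $S^{\mathsf c}(\pi,j)$, and one must verify that the naive degree count in $t$ comes out to precisely $-(j+1)$ — neither lower (which would kill the coefficient) nor higher (which would blow it up) — and that the sets $S(\pi,j),S^{\mathsf c}(\pi,j)$ partition correctly so that the spurious $(c_i-c_q)$ factors with $q\in S(\pi,j)$ cancel while those with $q\in S^{\mathsf c}(\pi,j)$ survive in the ratio $(c_i-c_q)/c_q$. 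Getting the index ranges and cancellations exactly right, rather than the asymptotics themselves, is where the care is needed; everything else is routine limit-taking justified by dominated convergence exactly as in the proof of Theorem \ref{LaguerreSKTypeITheorem}.
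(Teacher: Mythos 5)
Your overall strategy is exactly the paper's: obtain \eqref{LaguerreSKRecurrence} from the Jacobi--Piñeiro coefficients \eqref{JPRecurrence} via the limits $\lim_{t\to\infty}(-t)\bigl(b^0_{\vec n}(k;c_1t,\dots,c_pt,\alpha_0)-1\bigr)$ and $\lim_{t\to\infty}(-t)^{j+1}b^j_{\vec n}(c_1t,\dots,c_pt,\alpha_0)$, with the passage to the limit in the (finite) recurrence justified by convergence of the polynomials and of the coefficients. Your discussion of that interchange and your power counting for $j\ge 1$ (net order $t^{-(j+1)}$, cancellation of the $(c_i-c_q)$ factors with $q\in S(\pi,j)$) agree with what the paper does.

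However, your account of the $j=0$ asymptotics — the only part the paper has to work for — is wrong as stated. In the first term of \eqref{JPRecurrence} the product runs over all $q$, including $q=k$, and that factor equals $(n_k+1)/1$; hence the first term tends to $n_k+1$, not to $1$. Likewise the $i=k$ summand of the second term is not $O(1/t)$: for $i=k$ the factor $\alpha_i-\alpha_k-n_k+n_i-1$ equals $-1$ and the ratio of products tends to $n_i$ (the $q=i$ factor of the numerator product is $n_i$, so the "ratios of $\alpha$-differences" do not tend to $1$), and this summand tends to $-n_k$. The constant $1$ that gets subtracted arises from the cancellation $(n_k+1)-n_k$, not from each piece separately tending to $1$ or being $O(1/t)$. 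Moreover the $i\neq k$ summands behave like $\tfrac{n_i c_k}{c_i(c_i-c_k)}\tfrac1t$, so after multiplying by $(-t)$ each contributes $\tfrac{n_i}{c_i}+\tfrac{n_i}{c_k-c_i}$ rather than $\tfrac{n_i}{c_i}$; the clean value $\tfrac{\alpha_0+|\vec n|+1}{c_k}+\sum_i\tfrac{n_i}{c_i}$ emerges only after these recombine with the $O(1/t)$ corrections of the two $O(1)$ pieces, whose $\sum_{q\neq k}n_q/(c_k-c_q)$ contributions cancel the extra terms. As written, your term-by-term limits would not reproduce \eqref{LaguerreSKRecurrence}; you need to redo the $j=0$ expansion with this bookkeeping (split off $i=k$, expand every factor to order $1/t$, and simplify), which is precisely the computation the paper's proof carries out.
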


\begin{proof}

Utilizing either of the limits provided in the type I case \eqref{LaguerreSKasJPLimitTypeI} or type II case \eqref{LaguerreSKTypeII} over the corresponding nearest-neighbor recurrence relation \eqref{NextNeighbourRecurrence} for the Jacobi–Piñeiro polynomials, we deduce that the Laguerre coefficients can be derived from the Jacobi–Piñeiro coefficients outlined in \eqref{JPRecurrence} through the following limit:
\begin{align*}
 &\lim_{t\rightarrow\infty}(-t)\left(b^0_{\vec{n}}(k;c_1t,\dots,c_pt,\alpha_0)-1\right),\\
 &\begin{aligned}
 	&\lim_{t\rightarrow\infty}(-t)^{j+1}b^j_{\vec{n}}(c_1t,\dots,c_pt,\alpha_0),& j&\in\{1,\dots,p\}.
 \end{aligned}
\end{align*}

For the case 
where
$j>0$, applying this limit is straightforward and yields the previously mentioned expression.
Let's take a moment to address the case 
when
$j=0$.
Here, after the variable changes, the recurrence coefficients from \eqref{JPRecurrence} remain unchanged:
\begin{gather*}
 \begin{multlined}[t][.67\textwidth]
 	 \dfrac{(n_k+1)(c_kt+n_k+1)}{(c_kt+\alpha_0+n_k+|\vec{n}|+2)}\prod_{q=1,q\neq k}^p\dfrac{(c_k-c_q)t+n_k+1}{(c_k-c_q)t+n_k+1-n_q}
 -\dfrac{n_k(c_kt+n_k)}{(c_kt+\alpha_0+n_k+|\vec{n}|)}\prod_{q=1,q\neq k}^p\dfrac{(c_k-c_q)t+n_k}{(c_k-c_q)t+n_k-n_q}\\
 +\sum_{i=1,i\neq k}^p \dfrac{n_i(c_it+n_i)(c_kt+\alpha_0+n_k+|\vec{n}|+1)}{(c_it+\alpha_0+n_i+|\vec{n}|)_2((c_i-c_k)t-n_k+n_i-1)}\prod_{q=1,q\neq i}^p\dfrac{(c_i-c_q)t+n_i}{(c_i-c_q)t+n_i-n_q}
 \end{multlined}\\
 \begin{multlined}[t][.8\textwidth]
 =(n_k+1)\left(1-\dfrac{\alpha_0+|\vec{n}|+1}{c_kt+\alpha_0+n_k+|\vec{n}|+2}\right)\prod_{q=1,q\neq k}^p\left(1+\dfrac{n_q}{(c_k-c_q)t+n_k+1-n_q}\right)\\
 -n_k\left(1-\dfrac{\alpha_0+|\vec{n}|}{c_kt+\alpha_0+n_k+|\vec{n}|}\right)\prod_{q=1,q\neq k}^p\left(1+\dfrac{n_q}{(c_k-c_q)t+n_k-n_q}\right)\\
 +\sum_{i=1,i\neq k}^p \dfrac{n_i(c_it+n_i)(c_kt+\alpha_0+n_k+|\vec{n}|+1)}{(c_it+\alpha_0+n_i+|\vec{n}|)_2((c_i-c_k)t-n_k+n_i-1)}\prod_{q=1,q\neq i}^p\dfrac{(c_i-c_q)t+n_i}{(c_i-c_q)t+n_i-n_q}.
 \end{multlined}
\end{gather*}
The components within the sum on the right-hand side behave for $t\to\infty$ as:
\begin{align*}
 \dfrac{n_ic_k}{c_i(c_i-c_k)}\dfrac{1}{t}+O\left(\dfrac{1}{t^2}\right),
\end{align*}
while the other two summands behave for $t\to\infty$ as follows:
\begin{align*}
 &(n_k+1)\left(1+\dfrac{1}{t}\sum_{q=1,q\neq k}^p\dfrac{n_q}{c_k-c_q}-\dfrac{\alpha_0+|\vec{n}|+1}{tc_k}\right)+O\left(\dfrac{1}{t^2}\right),\\
 &n_k\left(1+\dfrac{1}{t}\sum_{q=1,q\neq k}^p\dfrac{n_q}{c_k-c_q}-\dfrac{\alpha_0+|\vec{n}|}{tc_k}\right)+ O\left(\dfrac{1}{t^2}\right),
\end{align*}
respectively.

Combining all the summands and simplifying, we find that the entire expression behaves for $t\to\infty$ as:
\begin{align*}
 1-\dfrac{1}{t}\left(\dfrac{\alpha_0+|\vec{n}|+1}{c_k}+\sum_{i=1}^p\dfrac{n_i}{c_i} \right)+ O\left(\dfrac{1}{t^2}\right).
\end{align*}
The limit now becomes immediate to apply, resulting in \eqref{LaguerreSKRecurrence}.
\end{proof}

In the step line this expression becomes:
\begin{cor}[Step line Laguerre of the second kind recurrence coefficients]
The Laguerre of second kind type I 
and II \eqref{LaguerreSKTypeII} multiple orthogonal polynomials in the step line satisfy a recurrence relation of the form \eqref{StepLineRecurrence} respect to the coefficients
\begin{align*}
	b^0_{pm+k}&=\dfrac{\alpha_0+pm+k+1}{c_{k+1}}+\sum_{i=1}^k\dfrac{m+1}{c_i}+\sum_{i=k+1}^p\dfrac{m}{c_i},
	\end{align*}
	and
	\begin{align*}
b^j_{pm+k}&=(-1)^{j+1}(\alpha_0+pm+k-j+1)_{j} 
\left(\sum_{i=1}^{k+1-j}\dfrac{m+1}{c_i^{j+1}}\prod_{q=k+2-j}^{k}\dfrac{c_i-c_q}{c_q}+\sum_{i=k+1}^{p}\dfrac{m}{c_i^{j+1}}\prod_{q=k+2-j}^{k}\dfrac{c_i-c_q}{c_q}\right),
\end{align*}
 for $ j\in\{1,\dots,k\},$ while for $ j\in\{k+1,\dots,p\}$

\begin{align*}
	b^j_{pm+k}=(-1)^{j+1}(\alpha_0+pm+k-j+1)_{j}\sum_{i=k+1}^{p+k+1-j}\dfrac{m}{c_i^{j+1}}\prod_{q=1}^{k}\dfrac{c_i-c_q}{c_q}
\prod_{q=p+k+2-j}^{p}\dfrac{c_i-c_q}{c_q}.
\end{align*}
\end{cor}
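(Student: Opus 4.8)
The plan is to derive the step-line coefficients purely by specializing the general nearest-neighbor coefficients \eqref{LaguerreSKRecurrence} via the step-line relabeling introduced just before \eqref{StepLineRecurrence}. Concretely, the step-line index $pm+k$ corresponds to the multi-index $\vec{n}=(\underbrace{m+1,\dots,m+1}_{k},\underbrace{m,\dots,m}_{p-k})$, so that $|\vec{n}|=pm+k$, with $n_i=m+1$ for $1\le i\le k$ and $n_i=m$ for $k<i\le p$, and with $b^0_{pm+k}=b^0_{\vec{n}}(k+1)$, $b^j_{pm+k}=b^j_{\vec{n}}$. The $j=0$ case is then immediate: substituting $k\mapsto k+1$, $|\vec{n}|=pm+k$ and the two values of $n_i$ into the first line of \eqref{LaguerreSKRecurrence} and splitting $\sum_{i=1}^p n_i/c_i$ into the blocks $i\le k$ and $i>k$ gives the stated formula for $b^0_{pm+k}$.

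For $j\ge 1$ the first real step is to identify the permutation $\pi$ that generates the descending step line: moving from $pm+k$ downward one unit at a time forces us to subtract $1$ successively from the entries $k,k-1,\dots,1$ and then from $p,p-1,\dots,k+1$, i.e.\ $\pi(j)=k+1-j$ for $1\le j\le k$ and $\pi(j)=p+k+1-j$ for $k<j\le p$. From $\pi^{-1}$ one then reads off the index sets occurring in \eqref{LaguerreSKRecurrence}: for $1\le j\le k$,
\[
S^{\textsf c}(\pi,j)=\{k+2-j,\dots,k\},\qquad S(\pi,j)=\{1,\dots,k+1-j\}\cup\{k+1,\dots,p\},
\]
while for $k<j\le p$,
\[
S^{\textsf c}(\pi,j)=\{1,\dots,k\}\cup\{p+k+2-j,\dots,p\},\qquad S(\pi,j)=\{k+1,\dots,p+k+1-j\}.
\]

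With these sets in hand, I would substitute into the second line of \eqref{LaguerreSKRecurrence}, using $|\vec{n}|=pm+k$, and split the sum over $i\in S(\pi,j)$ according to whether $i\le k$ (so $n_i=m+1$) or $i>k$ (so $n_i=m$). For $1\le j\le k$ this reproduces the two-term sum displayed in the corollary; for $k<j\le p$ every element of $S(\pi,j)$ exceeds $k$, so only the $n_i=m$ contribution survives and one obtains the single-sum formula, with the $S^{\textsf c}$-product splitting as $\prod_{q=1}^{k}\cdot\prod_{q=p+k+2-j}^{p}$. No analytic input beyond the already-established \eqref{LaguerreSKRecurrence} is required; the whole argument is a substitution.

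The only genuine obstacle is the combinatorial bookkeeping in the middle step: correctly pinning down the step-line permutation and then reading off $S(\pi,j)$ and $S^{\textsf c}(\pi,j)$ in each of the two regimes $j\le k$ and $j>k$. It is prudent to check the boundary values $j=k$ and $j=k+1$, where one of the stated index ranges degenerates (for instance $\{1,\dots,k+1-j\}=\emptyset$ when $j=k+1$), so as to confirm that the two formulas match the single general formula on the overlap and that empty products are understood as $1$.
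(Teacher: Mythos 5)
Your proposal is correct and follows exactly the route the paper intends (and leaves implicit): substitute the step-line multi-index $\vec{n}=(m+1,\dots,m+1,m,\dots,m)$ with the descending step-line permutation $\pi$, read off $S(\pi,j)$ and $S^{\mathsf c}(\pi,j)$ in the two regimes $j\le k$ and $j>k$, and specialize \eqref{LaguerreSKRecurrence}. Your identification of $\pi$ and the index sets, and the boundary check at $j=k,\,k+1$, are all consistent with the stated formulas, so nothing further is needed.
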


\section{Multiple Hermite with $p$ Weights: Type I Polynomials and the Recurrence Coefficients}

The weight functions for this family are
\begin{align}
 \label{WeightsHermite}
 & \begin{aligned}
 	 w_i(x;c_i)&\coloneq \exp{(-x^2+c_ix)}, & i&\in\{1,\dots,p\}, 
 \end{aligned}& \d\mu(x)&=\d x,& \Delta&=\mathbb R,
\end{align}
with \( c_1, \ldots, c_p \in \mathbb{R} \), and for an AT system, \( c_i \neq c_j \) for \( i \neq j \). The corresponding monic type II polynomials are as follows (cf. \cite[\S 23.5]{Ismail}):
\begin{align}
 \label{HermiteTypeII}
 H_{\vec{n}}(x;c_1,\ldots,c_p)&=\left(-\dfrac{1}{2}\right)^{|\vec{n}|}\prod_{q=1}^pc_q^{n_q}\sum_{l_1=0}^{n_1}\cdots\sum_{l_p=0}^{n_p}2^{l_1+\cdots+l_p}\prod_{q=1}^p\dfrac{(-n_q)_{l_q}}{l_q!c_q^{l_q}}\,H_{l_1+\cdots+l_p}(x) ,
\end{align}
where \( H_n \) represents the usual monic  Hermite polynomials,
\begin{align}
 \label{Hermite}
 H_n(x)=\sum_{k=0}^{\left\lfloor\frac{n}{2}\right\rfloor} (-1)^{k}\dfrac{(-n)_{2k}}{k!}\dfrac{x^{n-2k}}{2^{2k}}, && n \in \mathbb N_0 .
\end{align}
These can be obtained through the respective limits from the Jacobi–Piñeiro type II polynomials \eqref{JPTypeII} and the Laguerre of the first kind type II polynomials \eqref{LaguerreFKTypeII}:
\begin{align}
\label{HermiteasJPLimitTypeII}
 H_{\vec{n}}(x;c_1,\ldots,c_p)&=\lim_{\beta\rightarrow\infty}\left(2\sqrt{\beta}\right)^{|\vec{n}|}P_{\vec{n}}\left(\dfrac{x+\sqrt{\beta}}{2\sqrt{\beta}};\beta+c_1\sqrt{\beta},\dots,\beta+c_p\sqrt{\beta},\beta\right),\\
 \label{HermiteasLaguerreFKLimitTypeII}
 H_{\vec{n}}(x;c_1,\ldots,c_p)&=\lim_{\beta\rightarrow\infty}\dfrac{1}{\left(\sqrt{2\beta}\right)^{|\vec{n}|}}\,L_{\vec{n}}\left(\sqrt{2\beta}x+\beta;\beta+c_1\sqrt{\dfrac{\beta}{2}},\dots,\beta+c_p\sqrt{\dfrac{\beta}{2}}\right).
\end{align}

\subsection{Explicit Expressions for the Type I Polynomials}\label{S.Hermite.TypeI}

In \cite{HahnI}, we encountered difficulty in finding an explicit expression for the Hermite type I polynomials for a system with \( p = 2 \) weight functions. Nonetheless, here, we will advance further and demonstrate the following expression for a general number \( p \geq 2 \) of weights:
\begin{teo}
The Hermite multiple orthogonal polynomials of type I are
\begin{multline}
\label{HermiteTypeI}
 H^{(i)}_{\vec{n}}(x;c_1,\dots,c_p)= {\dfrac{ (-1)^{n_i-1}}{\sqrt{\pi}(n_i-1)!}\dfrac{2^{|\vec{n}|-1}\exp{\left(-\frac{c_i^2}{4}\right)}}{\prod_{q=1,q\neq i}^{p}(c_i-c_q)^{n_q}} }\\
 \times\sum_{l_1=0}^{n_i-1}\cdots\sum_{l_p=0}^{n_i-1-l_1-\cdots-l_{p-1}}
 \dfrac{(-n_i+1)_{l_1+\cdots+l_p}}{l_i!} \prod_{q=1,q\neq i}^p\dfrac{(n_q)_{l_q}}{l_q!(c_q-c_i)^{l_q}}{H_{n_i-1-l_1-\cdots-l_p}\left(\dfrac{c_i}{2}\right)}
\,{x}^{l_i} ,
\end{multline}
for \( i \in \{1,\dots,p\} \), where \( H_n \) represents the standard Hermite polynomials as defined in \eqref{Hermite}. These can be derived from the Jacobi–Piñeiro polynomials \eqref{JPTypeI} through the limit:
\begin{align}
\label{HermiteasJPLimitTypeI}
 H_{\vec{n}}^{(i)}(x,c_1,\dots,c_p)=\lim_{\beta\rightarrow\infty}\dfrac{1}{\left(2\sqrt{\beta}\right)^{|\vec{n}|}2^{2\beta+c_i\sqrt{\beta}}}\,P_{\vec{n}}^{(i)}\left(\dfrac{x+\sqrt{\beta}}{2\sqrt{\beta}},\beta+c_1\sqrt{\beta},\dots,\beta+c_p\sqrt{\beta},\beta\right),
\end{align}
or from the Laguerre of the first kind polynomials \eqref{LaguerreFKTypeI} through the limit:
\begin{align}
 \label{HermiteasLaguerreFKLimitTypeI}
 H_{\vec{n}}^{(i)}(x;c_1,\dots,c_p)={\lim_{\beta\to\infty} {\left(\sqrt{2\beta}\right)^{|\vec{n}|}\beta^{\,\beta+c_i\sqrt{\frac{\beta}{2}}}\Exp{\,-\beta}}\,L_{\vec{n}}^{(i)}\left({\sqrt{2\beta}\,x+\beta},\beta+c_1\sqrt{\dfrac{\beta}{2}},\dots,\beta+c_p\sqrt{\dfrac{\beta}{2}}\right)}.
\end{align}
\end{teo}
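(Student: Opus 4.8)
The plan is to mirror the strategy used for the Laguerre polynomials of the second kind in Theorem \ref{LaguerreSKTypeITheorem}: rather than verifying orthogonality directly, I would establish the two limit relations \eqref{HermiteasJPLimitTypeI} and \eqref{HermiteasLaguerreFKLimitTypeI} and simultaneously extract the explicit formula \eqref{HermiteTypeI} from the asymptotics. First I would argue that the limits, if they exist, must yield the Hermite type I polynomials. For this I would take the known orthogonality relations for the Jacobi--Piñeiro type I polynomials, perform the change of variables $x\mapsto(x+\sqrt\beta)/(2\sqrt\beta)$, $\alpha_i\mapsto\beta+c_i\sqrt\beta$, and check that, after multiplying by the normalizing prefactor in \eqref{HermiteasJPLimitTypeI}, the weight $x^{\alpha_i}(1-x)^\beta$ with the Jacobian converges (up to an $i$-independent constant that can be absorbed) to $\exp(-x^2+c_ix)$ on $\mathbb R$; a Gaussian-type domination bound uniform in $\beta$ then justifies interchanging limit and integral by dominated convergence, exactly as in the Laguerre of the second kind case. (Alternatively one could route through the Laguerre of the first kind limit \eqref{HermiteasLaguerreFKLimitTypeI}, using the already-proven connection \eqref{LaguerreFKTypeI}.)

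The second and computational part is to actually carry out the limit on the closed form \eqref{JPTypeI}. I would substitute $\alpha_q=\beta+c_q\sqrt\beta$ and $x\mapsto(x+\sqrt\beta)/(2\sqrt\beta)$ into the $_{p+1}F_p$ representation, then expand each Pochhammer ratio of the type $(\alpha_i-\alpha_q-n_q+1)_l/(\alpha_i-\alpha_q+1)_l$ — which becomes $((c_i-c_q)\sqrt\beta-n_q+1)_l/((c_i-c_q)\sqrt\beta+1)_l$ — by Gauss's $_2F_1$ summation formula exactly as was done for Laguerre of the second kind, turning the ratio into a finite sum $\sum_{k_q}(-l)_{k_q}(n_q)_{k_q}/(k_q!\,((c_i-c_q)\sqrt\beta+1)_{k_q})$. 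The remaining ratio involving $\alpha_i+\beta+|\vec n|$ against $\alpha_i+1$ scales like $2\beta/(\text{const}\cdot\sqrt\beta)$, so it must be handled differently: I expect the $_2F_1$-type expansion there, combined with the binomial expansion of $((x+\sqrt\beta)/(2\sqrt\beta))^l$, to produce — after collecting the surviving power of $\beta$ — precisely a truncated sum that is the monic Hermite polynomial $H_{n_i-1-l_1-\cdots-l_p}(c_i/2)$ via its defining series \eqref{Hermite}. This is where the factor $\exp(-c_i^2/4)$ and the power $2^{|\vec n|-1}$ appear, through Stirling-type asymptotics of the Gamma prefactors $\Gamma(\alpha_i+\beta+|\vec n|)/(\Gamma(\beta+|\vec n|)\Gamma(\alpha_i+1))$ and the $(\alpha_q+\beta+|\vec n|)_{n_q}$ products. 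Finally the index changes $k_1\mapsto n_i-1-l_1$, $k_j\mapsto l_{j-1}-l_j$ (the same relabeling as in Theorem \ref{LaguerreSKTypeITheorem}) rewrite the result in the nested-sum form displayed in \eqref{HermiteTypeI}.

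The main obstacle I anticipate is the bookkeeping of the $\beta\to\infty$ asymptotics: unlike the Laguerre of the second kind case where the relevant large parameter was $t$ entering linearly, here $\beta$ enters both linearly (through $\alpha_q=\beta+c_q\sqrt\beta$ and through $|\vec n|$-shifts) and through $\sqrt\beta$, so several competing scales $\beta$, $\sqrt\beta$, $1$ must be tracked and one must verify that all lower-order contributions — in particular the non-leading terms produced when $(-l)_{k_1}\cdots(-l)_{k_p}$ is resolved into $(-l)_{k_1+\cdots+k_p}$ plus corrections — genuinely vanish in the limit, i.e. are $O(1/\sqrt\beta)$ relative to the surviving term. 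A secondary subtlety is confirming the multiplicative constant is exactly $1$ (not merely constant): as in the Laguerre argument, I would check the normalization on the initial multi-indices $\vec s_j=\sum_{q\le j}\vec e_q$, where the type I polynomials are constants, so that \eqref{ortogonalidadTipoIContinua} pins down the constant, and then invoke the recurrence \eqref{NextNeighbourRecurrence} to propagate it; equivalently, one can simply note that the explicit leading coefficient in \eqref{HermiteTypeI} matches what the recurrence forces. Once these asymptotic estimates are in hand, the derivations of \eqref{HermiteasLaguerreFKLimitTypeI} and of the type II-based limits \eqref{HermiteasJPLimitTypeII}--\eqref{HermiteasLaguerreFKLimitTypeII} are routine consistency checks using the already-established Jacobi--Piñeiro--to--Laguerre limits.
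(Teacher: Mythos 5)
Your proposal follows essentially the same two-part strategy as the paper's proof: first arguing that the Jacobi--Piñeiro (and Laguerre of the first kind) limits yield polynomials satisfying the Hermite type I orthogonality, via the change of variables, a Gaussian domination bound and dominated convergence, and then computing the limit of \eqref{JPTypeI} through Gauss's ${}_2F_1$ summation of the Pochhammer ratios, binomial expansions, and Gamma-function asymptotics (which is indeed where $\exp(-c_i^2/4)$, $2^{|\vec{n}|-1}$ and $\sqrt{\pi}$ arise), finishing with the same index relabeling that produces \eqref{HermiteTypeI}. The only minor deviation is the normalization: the paper sidesteps any separate constant-pinning step by writing the Jacobi--Piñeiro (resp.\ Laguerre) orthogonality against the shifted monomials $\left(x-\tfrac{1}{2}\right)^k$ (resp.\ $(x-\beta)^k$), so the value $1$ at $k=|\vec{n}|-1$ survives the limit automatically, whereas your check on initial multi-indices plus the recurrence is workable but unnecessary.
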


\begin{proof}

As in the previous section, we will divide the proof into two parts. Firstly, we will demonstrate that the polynomials defined by both previous limits satisfy the orthogonality conditions with respect to the weight functions \eqref{WeightsHermite}, as outlined in \eqref{ortogonalidadTipoIContinua}. Let's begin with the Jacobi–Piñeiro limit.

The Jacobi–Piñeiro type I polynomials adhere to the orthogonality conditions:
\begin{align*}
 \sum_{i=1}^p\int_{0}^1 \left(x-\dfrac{1}{2}\right)^k P_{\vec{n}}^{(i)}(x;\alpha_1,\dots,\alpha_p,\beta) x^{\alpha_i}(1-x)^{\beta}\d x=\begin{cases}
 0,\;\text{if}\; k\in\{0,\dots,|\vec{n}|-2\},\\
 1,\;\text{if}\; k=|\vec{n}|-1.
 \end{cases}
\end{align*}
Under the change of variables
\begin{align*}
\begin{aligned}
	 x&\rightarrow \dfrac{x+\sqrt{\beta}}{2\sqrt{\beta}},& \alpha_i&\rightarrow \beta+c_i\sqrt{\beta} ,
\end{aligned}
\end{align*}
the previous expression transforms into:
\begin{multline*}
 \sum_{i=1}^p
 \int_{-\sqrt{\beta}}^{\sqrt{\beta}} x^k \dfrac{1}{\left(2\sqrt{\beta}\right)^{|\vec{n}|}2^{2\beta+c_i\sqrt{\beta}}}\,P_{\vec{n}}^{(i)}\left(\dfrac{x+\sqrt{\beta}}{2\sqrt{\beta}},\beta+c_1\sqrt{\beta},\dots,\beta+c_p\sqrt{\beta},\beta\right){\left(1-\dfrac{x^2}{\beta}\right)^{\beta}\left(1+\dfrac{x}{\sqrt{\beta}}\right)^{c_i\sqrt{\beta}}}\d x\\=\begin{cases}
 0,\;\text{if}\;k\in\{0,\dots,|\vec{n}|-2\},\\
 1,\;\text{if}\; k=|\vec{n}|-1.
 \end{cases}
\end{multline*}

Now, our goal is to apply the limit as \( \beta \rightarrow \infty \) to both sides of the preceding equation. To ensure that the limit can be interchanged with the integral, we need to establish that the modulus of the expression within the integral is bounded by an integrable function for all \( \beta > 0 \). On one hand, we have:
\begin{align*}
\begin{aligned}
	 {\left(1-\dfrac{x^2}{\beta}\right)^{\beta}\left(1+\dfrac{x}{\sqrt{\beta}}\right)^{c_i\sqrt{\beta}}}I_{\left(-\sqrt{\beta},\sqrt{\beta}\right)}&\leq \exp{(-x^2+c_i x)},&\beta&>0, & x&\in\mathbb R.
\end{aligned}
\end{align*}
On the other hand, the polynomial
\begin{align*}
\dfrac{1}{\left(2\sqrt{\beta}\right)^{|\vec{n}|}2^{2\beta+c_i\sqrt{\beta}}}\,P_{\vec{n}}^{(i)}\left(\dfrac{x+\sqrt{\beta}}{2\sqrt{\beta}},\beta+c_1\sqrt{\beta},\dots,\beta+c_p\sqrt{\beta},\beta\right) ,
\end{align*}
is bounded for all \( \beta > 0 \) since the limit \( \beta \to \infty \) exists, as we will demonstrate shortly.

Thus, the expression is bounded, and by Lebesgue's dominated convergence theorem, we can interchange the limit to obtain:
\begin{align*}
 \sum_{i=1}^p
 &\int_{-\infty}^{\infty} x^k \underbrace{\lim_{\beta\rightarrow\infty}\dfrac{1}{\left(2\sqrt{\beta}\right)^{|\vec{n}|}2^{2\beta+c_i\sqrt{\beta}}}\,P_{\vec{n}}^{(i)}\left(\dfrac{x+\sqrt{\beta}}{2\sqrt{\beta}},\beta+c_1\sqrt{\beta},\dots,\beta+c_p\sqrt{\beta},\beta\right)}_{=H^{(i)}_{\vec{n}}(x;c_1,\dots,c_p)\;\text{by definition}}
 \underbrace{\lim_{\beta\rightarrow\infty}\left(1-\dfrac{x^2}{\beta}\right)^{\beta}\left(1+\dfrac{x}{\sqrt{\beta}}\right)^{c_i\sqrt{\beta}}}_{=\exp{(-x^2+c_ix)}}\d x\\&=\begin{cases}
 0,\;\text{if}\;k\in\{0,\dots,|\vec{n}|-2\},\\
 1,\;\text{if}\; k=|\vec{n}|-1.
 \end{cases}
\end{align*}

The reasoning from the Laguerre limit follows a completely analogous path. The Laguerre of the first kind type I polynomials adhere to the orthogonality conditions:
\begin{align*}
 \sum_{i=1}^p\int_{0}^\infty \left(x-\beta\right)^k L_{\vec{n}}^{(i)}(x;\alpha_1,\dots,\alpha_p) x^{\alpha_i}\Exp{-x}\d x=\begin{cases}
 0,\;\text{if}\; k\in\{0,\dots,|\vec{n}|-2\},\\
 1,\;\text{if}\; k=|\vec{n}|-1.
 \end{cases}
\end{align*}
Under the change of variables
\begin{align*}
\begin{aligned}
	 x&\rightarrow {\sqrt{2\beta}\,x+\beta},& \alpha_i&\rightarrow \beta+c_i\sqrt{\dfrac{\beta}{2}} ,
\end{aligned}
\end{align*}
the previous expression becomes
\begin{align*}
 \sum_{i=1}^p
 \int_{-\sqrt{\frac{\beta}{2}}}^{\infty}\; &x^k {\left(\sqrt{2\beta}\right)^{|\vec{n}|}\beta^{\,\beta+c_i\sqrt{\frac{\beta}{2}}}\Exp{\,-\beta}}\,L_{\vec{n}}^{(i)}\left({\sqrt{2\beta}\,x+\beta},\beta+c_1\sqrt{\dfrac{\beta}{2}},\dots,\beta+c_p\sqrt{\dfrac{\beta}{2}}\right)\\
 &\times\left(1+\sqrt{\dfrac{2}{\beta}}\,x\right)^{c_i\sqrt{\frac{\beta}{2}}}\left(1+\sqrt{\dfrac{2}{\beta}}\,x\right)^{\beta}\exp{\left(-\sqrt{2\beta}\,x\right)}\d x=\begin{cases}
 0,\;\text{if}\;k\in\{0,\dots,|\vec{n}|-2\},\\
 1,\;\text{if}\; k=|\vec{n}|-1.
 \end{cases}
\end{align*}

Let's expand
\begin{align*}
 \left(1+\sqrt{\dfrac{2}{\beta}}\,x\right)^{\beta}=\exp{\left(\beta\ln{\left(1+\sqrt{\frac{2}{\beta}}\,x\right)}\right)}=\exp{\left(\beta\sum_{k=1}^{\infty}\frac{(-1)^{k+1}}{k}\sqrt{\frac{2}{\beta}}^k x^k\right)}=\exp{\left(\sqrt{{2}{\beta}}x- x^2+\sum_{k=3}^{\infty}\frac{(-1)^{k+1}}{k}\frac{\sqrt{2}^k}{\sqrt{\beta}^{k-2}} x^k\right)}
\end{align*}
to get
\begin{align*}
 \sum_{i=1}^p
 \int_{-\sqrt{\frac{\beta}{2}}}^{\infty}\; &x^k {\left(\sqrt{2\beta}\right)^{|\vec{n}|}\beta^{\,\beta+c_i\sqrt{\frac{\beta}{2}}}\Exp{\,-\beta}}\,L_{\vec{n}}^{(i)}\left({\sqrt{2\beta}\,x+\beta},\beta+c_1\sqrt{\dfrac{\beta}{2}},\dots,\beta+c_p\sqrt{\dfrac{\beta}{2}}\right)\\
 &\times\left(1+\sqrt{\dfrac{2}{\beta}}\,x\right)^{c_i\sqrt{\frac{\beta}{2}}}\exp{\left(- x^2+\sum_{k=3}^{\infty}\frac{(-1)^{k+1}}{k}\frac{\sqrt{2}^k}{\sqrt{\beta}^{k-2}} x^k\right)}\d x=\begin{cases}
 0,\;\text{if}\;k\in\{0,\dots,|\vec{n}|-2\},\\
 1,\;\text{if}\; k=|\vec{n}|-1.
 \end{cases}
\end{align*}

Now, our goal is to apply the limit as \( \beta \rightarrow \infty \) to both sides of the preceding equation. Similar to the previous case, we need to ensure that the modulus of the expression within the integral is bounded. On one hand, we have:
\begin{align*}
\begin{aligned}
	 \left(1+\sqrt{\dfrac{2}{\beta}}\,x\right)^{c_i\sqrt{\frac{\beta}{2}}}&\exp\left({- x^2+\sum_{k=3}^{\infty}\frac{(-1)^{k+1}}{k}\frac{\sqrt{2}^k}{\sqrt{\beta}^{k-2}} x^k}\right)I_{\left(-\sqrt{\frac{\beta}{2}},\infty\right)}\\&=\left(1+\sqrt{\dfrac{2}{\beta}}\,x\right)^{c_i\sqrt{\frac{\beta}{2}}}\left(1+\sqrt{\dfrac{2}{\beta}}\,x\right)^{\beta}\Exp{-\sqrt{2\beta}\,x}I_{\left(-\sqrt{\frac{\beta}{2}},\infty\right)}\leq \exp{\left(-x^2+c_i x\right)},&\beta&>0, & x&\in\mathbb R.
\end{aligned}
\end{align*}

On the other hand, the polynomial
\begin{align*}
{\left(\sqrt{2\beta}\right)^{|\vec{n}|}\beta^{\,\beta+c_i\sqrt{\frac{\beta}{2}}}\Exp{-\beta}}\,L_{\vec{n}}^{(i)}\left({\sqrt{2\beta}\,x+\beta},\beta+c_1\sqrt{\frac{\beta}{2}},\dots,\beta+c_p\sqrt{\frac{\beta}{2}}\right) 
\end{align*}
is bounded for all \( \beta > 0 \) due to the existence of the limit \( \beta \to \infty \).

Hence, the expression is bounded, and by Lebesgue's dominated convergence theorem, we can interchange the limit \( \beta\rightarrow\infty \) to obtain:
\begin{multline*}
	\sum_{i=1}^p
	\int_{-\infty}^{\infty}x^k\lim_{\beta\to\infty} {\left(\sqrt{2\beta}\right)^{|\vec{n}|}\beta^{\,\beta+c_i\sqrt{\frac{\beta}{2}}}\Exp{-\beta}}\,L_{\vec{n}}^{(i)}\left({\sqrt{2\beta}\,x+\beta},\beta+c_1\sqrt{\frac{\beta}{2}},\dots,\beta+c_p\sqrt{\frac{\beta}{2}}\right)\\
	\times\lim_{\beta\to\infty} {\left(1+\sqrt{\frac{2}{\beta}}\,x\right)^{c_i\sqrt{\frac{\beta}{2}}}\exp\left({- x^2+\sum_{k=3}^{\infty}\frac{(-1)^{k+1}}{k}\frac{\sqrt{2}^k}{\sqrt{\beta}^{k-2}} x^k}\right)}\,\d x
	=\begin{cases}
		0, &\text{if}\;k\in\{0,\dots,|\vec{n}|-2\},\\
		1, &\text{if}\; k=|\vec{n}|-1.
	\end{cases}
\end{multline*}
Once we have established that the polynomials defined by both of these limits are the Hermite polynomials, it is sufficient to compute either of them to obtain an explicit expression. Let's consider, for example, the Jacobi–Piñeiro limit \eqref{HermiteasJPLimitTypeI}.

Recall that the Jacobi–Piñeiro type I polynomials are:
\begin{multline*}
 P^{(i)}_{\vec{n}}(x;\alpha_1,\dots,\alpha_p,\beta) =(-1)^{|\vec{n}|-1}\dfrac{\prod_{q=1}^{p}(\alpha_q+\beta+|\vec{n}|)_{n_q}}{(n_i-1)!\prod_{q=1,q\neq i}^{p}(\alpha_q-\alpha_i)_{n_q}} \dfrac{\Gamma(\alpha_i+\beta+|\vec{n}|)}{\Gamma(\beta+|\vec{n}|)\Gamma(\alpha_i+1)}\\\times \sum_{l=0}^{n_i-1}{\dfrac{(-n_i+1)_l(\alpha_i+\beta+|\vec{n}|)_l \prod_{q=1,q\neq i}^p(\alpha_i-\alpha_q-n_q+1)_l}{l!(\alpha_i+1)_l\prod_{q=1,q\neq i}^p(\alpha_i-\alpha_q+1)_l}}x^l.
\end{multline*}
Through the corresponding changes of variables, we obtain:
\begin{multline*}
 \dfrac{1}{\left(2\sqrt{\beta}\right)^{|\vec{n}|}2^{2\beta+c_i\sqrt{\beta}}}P_{\vec{n}}^{(i)}\left(\dfrac{x+\sqrt{\beta}}{2\sqrt{\beta}},\beta+c_1\sqrt{\beta},\dots,\beta+c_p\sqrt{\beta},\beta\right)\\
 	={\dfrac{(-1)^{|\vec{n}|-1}}{(n_i-1)!\left(2\sqrt{\beta}\right)^{|\vec{n}|}2^{2\beta+c_i\sqrt{\beta}}}\,\dfrac{\prod_{q=1}^{p}(2\beta+c_q\sqrt{\beta}+|\vec{n}|)_{n_q}}{\prod_{q=1,q\neq i}^{p}(c_q\sqrt{\beta}-c_i\sqrt{\beta})_{n_q}} \dfrac{\Gamma(2\beta+c_i\sqrt{\beta}+|\vec{n}|)}{\Gamma(\beta+|\vec{n}|)\Gamma(\beta+c_i\sqrt{\beta}+1)}}\\
 \times\sum_{l=0}^{n_i-1}{\dfrac{(-n_i+1)_l}{l!}
 \dfrac{(2\beta+c_i\sqrt{\beta}+|\vec{n}|)_l}{(\beta+c_i\sqrt{\beta}+1)_l}\prod_{q=1,q\neq i}^p\dfrac{(c_i\sqrt{\beta}-c_q\sqrt{\beta}-n_q+1)_l}{(c_i\sqrt{\beta}-c_q\sqrt{\beta}+1)_l}}\,\left(\dfrac{x+\sqrt{\beta}}{2\sqrt{\beta}}\right)^l.
\end{multline*}

Let's reformulate the factors within the sum to a more convenient expression. Similar to the Laguerre of the second kind limit, we can utilize Gauss's hypergeometric formula:
\begin{align*}
\begin{aligned}
	 \pFq{2}{1}{-n,b}{c}{1}&=\sum_{l=0}^{n}\dfrac{(-n)_l}{l!}\dfrac{(b)_l}{(c)_l}=\dfrac{(c-b)_n}{(c)_n},& n&\in\mathbb N_0,
\end{aligned}
\end{align*}
to express the following fractions within the sum as:
\begin{align*}
{\dfrac{\left(c_i\sqrt{\beta}-c_q\sqrt{\beta}-n_q+1\right)_l}{\left(c_i\sqrt{\beta}-c_q\sqrt{\beta}+1\right)_l}}&={\sum_{k_q=0}^{l}\frac{(-l)_{k_q}(n_q)_{k_q}}{k_q!\left(c_i\sqrt{\beta}-c_q\sqrt{\beta}+1\right)_{k_q}}}.
\end{align*}
With the remaining Pochhammer fraction, we will need to engage in a bit more work. We will utilize the Gauss summation formula twice, as well as Newton's binomial formula for Pochhammer symbols
\begin{align*}
 (a+b)_n=\sum_{k=0}^n\dbinom{n}{k}(a)_{n-k}(b)_k.
\end{align*}
By employing these methods, we find that:
\begin{align*}
 {\dfrac{\left(2\beta+c_i\sqrt{\beta}+|\vec{n}|\right)_l}{\left
(\beta+c_i\sqrt{\beta}+1\right)_l}}&={\sum_{s=0}^{l}\frac{(-l)_{s}(-\beta-|\vec{n}|+1)_{s}}{s!\left
(\beta+c_i\sqrt{\beta}+1\right)_{s}}}={\sum_{s=0}^{l}(-1)^s\dfrac{(-l)_{s}(\beta+|\vec{n}|-s)_{s}}{s!\left
(\beta+c_i\sqrt{\beta}+1\right)_{s}}}\\
&={\sum_{s=0}^{l}\sum_{k_i=0}^{s}(-1)^s\dfrac{(-l)_{s}(-s)_{k_i}\left
(c_i\sqrt{\beta}-|\vec{n}|+1+s\right)_{k_i}}{s!k_i!\left
(\beta+c_i\sqrt{\beta}+1\right)_{k_i}}}\\
 &=\sum_{k_i=0}^{l}\sum_{s=0}^{l-k_i}{(-1)^{s+k_i}\dfrac{(-l)_{s+k_i}}{s!k_i!\left
(\beta+c_i\sqrt{\beta}+1\right)_{k_i}}}\underbrace{\left(-c_i\sqrt{\beta}+|\vec{n}|-2k_i-s\right)_{k_i}}_{=\sum_{k=0}^{k_i}\binom{k_i}{k}\left
(-c_i\sqrt{\beta}+|\vec{n}|-2k_i\right)_{k_i-k}(-s)_k}\\
 &={\sum_{k_i=0}^{l}\sum_{k=0}^{k_i}
 (-1)^{k_i}\dfrac{(-l)_{k_i+k}}{(k_i-k)!k!}\dfrac{\left
(-c_i\sqrt{\beta}+|\vec{n}|-2k_i\right)_{k_i-k}}{\left
(\beta+c_i\sqrt{\beta}+1\right)_{k_i}}\underbrace{\sum_{s=0}^{l-k_i-k}\dbinom{l-k_i-k}{s}}_{=2^{l-k_i-k}}}.
\end{align*}
So, substituting these expressions for the fractions within the multiple sum, we find:
\begin{multline*}
 H^{(i)}_{\vec{n}}(x;c_1,\dots,c_p)=\lim_{\beta\rightarrow\infty}{\dfrac{(-1)^{|\vec{n}|-1}}{(n_i-1)!\left(2\sqrt{\beta}\right)^{|\vec{n}|}2^{2\beta+c_i\sqrt{\beta}}}\,\dfrac{\prod_{q=1}^{p}(2\beta+c_q\sqrt{\beta}+|\vec{n}|)_{n_q}}{\prod_{q=1,q\neq i}^{p}(c_q\sqrt{\beta}-c_i\sqrt{\beta})_{n_q}} \dfrac{\Gamma(2\beta+c_i\sqrt{\beta}+|\vec{n}|)}{\Gamma(\beta+|\vec{n}|)\Gamma(\beta+c_i\sqrt{\beta}+1)}}\\
 \times\sum_{l=0}^{n_i-1}\dfrac{(-n_i+1)_l}{l!}{\sum_{k_i=0}^{l}\sum_{k=0}^{k_i}
 (-1)^{k_i}\dfrac{(-l)_{k_i+k}\,2^{l-k_i-k}}{(k_i-k)!k!}\dfrac{\left
(-c_i\sqrt{\beta}+|\vec{n}|-2k_i\right)_{k_i-k}}{\left
(\beta+c_i\sqrt{\beta}+1\right)_{k_i}}} 
 \\\times \prod_{q=1,q\neq i}^p{\sum_{k_q=0}^{l}\frac{(-l)_{k_q}(n_q)_{k_q}}{k_q!\left(c_i\sqrt{\beta}-c_q\sqrt{\beta}+1\right)_{k_q}}}\,\left(\dfrac{x+\sqrt{\beta}}{2\sqrt{\beta}}\right)^l.
\end{multline*}
Utilizing the asymptotic behavior of the gamma function fraction as \( \beta \rightarrow \infty \), we have:
\begin{align*}
 \dfrac{\Gamma(2\beta+c_i\sqrt{\beta}+|\vec{n}|)}{\Gamma(\beta+|\vec{n}|)\Gamma(\beta+c_i\sqrt{\beta}+1)}\sim
 \dfrac{2^{2\beta+c_i\sqrt{\beta}+|\vec{n}|-1}}{\sqrt{\pi}\sqrt{\beta}}{\exp\left({-\frac{c_i^2(2\sqrt{\beta}-c_i)}{8\sqrt{\beta}}}\right)}.
\end{align*}
The common factor behaves when \( \beta \to \infty \) as follows:
\begin{align*}
 {\dfrac{(-2)^{|\vec{n}|-1}}{\sqrt{\pi}(n_i-1)!}\dfrac{1}{\prod_{q=1,q\neq i}^{p}(c_q-c_i)^{n_q}} {\exp\left({-\frac{c_i^2(2\sqrt{\beta}-c_i)}{8\sqrt{\beta}}}\right)}\sqrt{\beta}^{n_i-1}}.
\end{align*}
Upon performing the index change \( k_i \rightarrow k_i - k \), we find that the multiple sum possesses an asymptotic expansion 
when
 \( \beta \to \infty \):
\begin{multline*}
 \sum_{k_1=0}^{n_i-1}\cdots\sum_{k_p=0}^{n_i-1-k_1-\cdots-k_{p-1}}
 \dfrac{c_i^{k_i}}{2^{k_i}}\sum_{k=0}^{\left\lfloor\frac{k_i}{2}\right\rfloor}\dfrac{(-1)^{k}}{c_i^{2k}}\dfrac{1}{(k_i-2k)!k!}\dfrac{1}{\sqrt{\beta}^{k_i}} \prod_{q=1,q\neq i}^p\dfrac{(n_q)_{k_q}}{k_q!(c_i-c_q)^{k_q}\sqrt{\beta}^{k_q}}
 \\
 \times\sum_{l=0}^{n_i-1}\dfrac{(-n_i+1)_l}{l!}(-l)_{k_1+\cdots+k_p}\,2^l\left(\dfrac{x+\sqrt{\beta}}{2\sqrt{\beta}}\right)^l+ O\left(\dfrac{1}{\sqrt{\beta}^{n_i}}\right).
\end{multline*}
The Pochhammer product \((-l)_{k_1}\cdots(-l)_{k_p}\) can be expressed as \((-l)_{k_1+\cdots+k_p}\) plus terms involving lower order Pochhammer symbols, whose contributions are encapsulated within \(O\left(\frac{1}{\sqrt{\beta}^{n_i}}\right)\).

Applying Newton's binomial formula, we obtain:
\begin{align*}
 \sum_{l=0}^{n_i-1}&\dfrac{(-n_i+1)_l}{l!}(-l)_{k_1+\cdots+k_p}\,2^l\left(\dfrac{x+\sqrt{\beta}}{2\sqrt{\beta}}\right)^l\\
 &= (-1)^{k_1+\cdots+k_p}(-n_i+1)_{k_1+\cdots+k_p}\left(\dfrac{x+\sqrt{\beta}}{\sqrt{\beta}}\right)^{k_1+\cdots+k_p}\sum_{l=0}^{n_i-1-k_1-\cdots-k_p}\dbinom{n_i-1-k_1-\cdots-k_p}{l}\left(-\dfrac{x+\sqrt{\beta}}{\sqrt{\beta}}\right)^{l}\\
 &=(-1)^{n_i-1}(-n_i+1)_{k_1+\cdots+k_p}\left(\dfrac{x+\sqrt{\beta}}{\sqrt{\beta}}\right)^{k_1+\cdots+k_p}\left(\dfrac{x}{\sqrt{\beta}}\right)^{n_i-1-k_1-\cdots-k_p}.
\end{align*}
Substituting this sum into the previous expression, we derive the following asymptotic expansion 
when
 \( \beta \to \infty \):
\begin{multline*}
 \dfrac{(-1)^{n_i-1}}{\sqrt{\beta}^{n_i-1}}\sum_{k_1=0}^{n_i-1}\cdots\sum_{k_p=0}^{n_i-1-k_1-\cdots-k_{p-1}}
 (-n_i+1)_{k_1+\cdots+k_p}\,\dfrac{c_i^{k_i}}{2^{k_i}}\sum_{k=0}^{\left\lfloor\frac{k_i}{2}\right\rfloor}\dfrac{(-1)^{k}}{c_i^{2k}}\dfrac{1}{(k_i-2k)!k!}\\
 \times\prod_{q=1,q\neq i}^p\dfrac{(n_q)_{k_q}}{k_q!(c_i-c_q)^{k_q}}\,{x}^{n_i-1-k_1-\cdots-k_p}+ O\left(\dfrac{1}{\sqrt{\beta}^{n_i}}\right).
\end{multline*}
Combining the common factor with the sums, we ascertain that the entire expression behaves asymptotically as follows for \( \beta \to \infty \):
\begin{multline*}
 \dfrac{ 2^{|\vec{n}|-1}}{\sqrt{\pi}(n_i-1)!}\dfrac{\exp\left({-\frac{c_i^2}{4}}\right)}{\prod_{q=1,q\neq i}^{p}(c_i-c_q)^{n_q}} \sum_{k_1=0}^{n_i-1}\cdots\sum_{k_p=0}^{n_i-1-k_1-\cdots-k_{p-1}}
 (-n_i+1)_{k_1+\cdots+k_p}\,\dfrac{c_i^{k_i}}{2^{k_i}}\sum_{k=0}^{\left\lfloor\frac{k_i}{2}\right\rfloor}\dfrac{(-1)^{k}}{c_i^{2k}}\dfrac{1}{(k_i-2k)!k!}\\
 \times\prod_{q=1,q\neq i}^p\dfrac{(n_q)_{k_q}}{k_q!(c_i-c_q)^{k_q}}\,{x}^{n_i-1-k_1-\cdots-k_p}+ O\left(\dfrac{1}{\sqrt{\beta}}\right).
\end{multline*}
Now, it is straightforward to apply the limit \( \beta \rightarrow \infty \) 
and
 find that:
\begin{multline*}
 H^{(i)}_{\vec{n}}(x;c_1,\dots,c_p)= {\dfrac{ 2^{|\vec{n}|-1}}{\sqrt{\pi}(n_i-1)!}\dfrac{\exp\left({-\frac{c_i^2}{4}}\right)}{\prod_{q=1,q\neq i}^{p}(c_i-c_q)^{n_q}} }\\
 \times\sum_{k_1=0}^{n_i-1}\cdots\sum_{k_p=0}^{n_i-1-k_1-\cdots-k_{p-1}}
 (-n_i+1)_{k_1+\cdots+k_p}\,\dfrac{c_i^{k_i}}{2^{k_i}}\sum_{k=0}^{\left\lfloor\frac{k_i}{2}\right\rfloor}\dfrac{(-1)^{k}}{c_i^{2k}}\dfrac{1}{(k_i-2k)!k!}\\\times \prod_{q=1,q\neq i}^p\dfrac{(n_q)_{k_q}}{k_q!(c_i-c_q)^{k_q}}\,{x}^{n_i-1-k_1-\cdots-k_p}.
\end{multline*}

A more convenient form of this limit emerges through the index changes \( k_1 \rightarrow n_i - 1 - l_1 \), \( k_i \rightarrow l_{i-1} - l_i \) for \( i \in \{2,\dots,p\} \). Hence, the sums transform to:
\begin{align*}
 H^{(i)}_{\vec{n}}(x;c_1,\dots,c_p)=& {\dfrac{ (-1)^{n_i-1}}{\sqrt{\pi}(n_i-1)!}\dfrac{2^{|\vec{n}|-1}\exp\left({-\frac{c_i^2}{4}}\right)}{\prod_{q=1,q\neq i}^{p}(c_i-c_q)^{n_q}} }\sum_{l_1=0}^{n_i-1}\cdots\sum_{l_p=0}^{n_i-1-l_1-\cdots-l_{p-1}}
 \dfrac{(-n_i+1)_{l_1+\cdots+l_p}}{l_i!} \prod_{q=1,q\neq i}^p\dfrac{(n_q)_{l_q}}{l_q!(c_q-c_i)^{l_q}}\\
 &\times\underbrace{\sum_{k=0}^{\left\lfloor\frac{n_i-1-l_1-\cdots-l_p}{2}\right\rfloor}(-1)^k\dfrac{(-n_i+1+l_1+\cdots+l_p)_{2k}}{k!}\dfrac{c_i^{n_i-1-l_1-\cdots-l_p-2k}}{2^{n_i-1-l_1-\cdots-l_p}}}_{=H_{n_i-1-l_1-\cdots-l_p}\left(\frac{c_i}{2}\right)\;\text{by \eqref{Hermite}}}
\,{x}^{l_i} .
\end{align*}
\end{proof}

\subsection{Explicit Expressions for the Recurrence Coefficients}
\label{S.Hermite.recurrence}

Now we are going to find the corresponding recurrence coefficients
\begin{teo}\label{teo:Hermite_recurrence} 
 The Hermite multiple orthogonal polynomials of type I \eqref{HermiteTypeI}
 and II \eqref{HermiteTypeII} satisfy respective nearest-neighbour recurrence relations of the form \eqref{NextNeighbourRecurrence} respect to the coefficients
\begin{align*}
	 b^0_{\vec{n}}(k)&=\dfrac{c_k}{2},&
\begin{aligned}b^j_{\vec{n}}&=\dfrac{1}{2^j}\sum_{i\in S(\pi,j)}n_i\prod_{q\in S^{\textsf c}(\pi,j)}{(c_i-c_q)},& j&\in\{1,\dots,p\}.
\end{aligned}
\end{align*}
\end{teo}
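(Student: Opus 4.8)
The plan is to read off the Hermite recurrence coefficients from the Laguerre of the first kind ones in \eqref{LaguerreFKRecurrence} by a limiting procedure, exactly as the Laguerre of the second kind coefficients were extracted from the Jacobi--Piñeiro ones. The relevant limit relations are \eqref{HermiteasLaguerreFKLimitTypeII} for type II and \eqref{HermiteasLaguerreFKLimitTypeI} for type I; both amount to the substitution $\alpha_i\mapsto\beta+c_i\sqrt{\beta/2}$ together with the affine rescaling $x\mapsto\sqrt{2\beta}\,x+\beta$ of the argument, followed by $\beta\to\infty$. Inserting the rescaled argument into the Laguerre first kind recurrence relations \eqref{NextNeighbourRecurrence} and, for each fixed $\beta$, dividing the type II relation by $(\sqrt{2\beta})^{|\vec{n}|}$ (respectively multiplying the type I relation by $(\sqrt{2\beta})^{|\vec{n}|}\beta^{\beta+c_i\sqrt{\beta/2}}\Exp{-\beta}$), one uses $|\vec{n}+\vec{e}_k|=|\vec{n}|+1$ and $|\vec{n}-\vec{s}_j|=|\vec{n}|-j$ to collect the induced powers of $\sqrt{2\beta}$; after moving the term $\beta B_{\vec{n}}$ to the left and dividing by $\sqrt{2\beta}$, the Hermite polynomials are seen, in the limit $\beta\to\infty$, to satisfy a recurrence of the form \eqref{NextNeighbourRecurrence} with
\begin{align*}
 b^0_{\vec{n}}(k)=\lim_{\beta\to\infty}\dfrac{b^0_{\vec{n}}(k)-\beta}{\sqrt{2\beta}},\qquad b^j_{\vec{n}}=\lim_{\beta\to\infty}\dfrac{b^j_{\vec{n}}}{(2\beta)^{(j+1)/2}},\quad j\in\{1,\dots,p\},
\end{align*}
where on the right the Laguerre first kind coefficients \eqref{LaguerreFKRecurrence} are evaluated at $\alpha_i=\beta+c_i\sqrt{\beta/2}$. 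Existence of these limits follows once the polynomials are known to converge, since the coefficients in \eqref{NextNeighbourRecurrence} are uniquely, hence continuously, determined by the (distinct-degree, linearly independent) polynomials occurring there.

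The cases $j\in\{1,\dots,p\}$ are immediate. Since now $\alpha_i-\alpha_q=(c_i-c_q)\sqrt{\beta/2}$, in the $i$-th summand of $b^j_{\vec{n}}=\sum_{i\in S(\pi,j)}(\alpha_i+n_i)\prod_{q=1}^p(\alpha_i-\alpha_q+n_i)\big/\prod_{q\in S(\pi,j),\,q\neq i}(\alpha_i-\alpha_q+n_i-n_q)$ one splits the numerator product over $q=i$ (which yields the factor $n_i$), over $q\in S(\pi,j)\setminus\{i\}$ (which together with the denominator forms ratios tending to $1$), and over $q\in S^{\textsf c}(\pi,j)$, each of whose factors behaves as $(c_i-c_q)\sqrt{\beta/2}$. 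As $|S^{\textsf c}(\pi,j)|=p-(p-j+1)=j-1$ and $\alpha_i+n_i\sim\beta$, the $i$-th summand is asymptotic to $\beta\,(\beta/2)^{(j-1)/2}\,n_i\prod_{q\in S^{\textsf c}(\pi,j)}(c_i-c_q)$; dividing by $(2\beta)^{(j+1)/2}$ and letting $\beta\to\infty$ produces exactly $2^{-j}\sum_{i\in S(\pi,j)}n_i\prod_{q\in S^{\textsf c}(\pi,j)}(c_i-c_q)$, as claimed.

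The case $j=0$ is the only subtle one, because the $O(\beta)$ contribution must cancel before the limit can be taken. Writing the Laguerre first kind coefficient of \eqref{LaguerreFKRecurrence} as $A+\sum_{i=1}^p B_i$ (first term plus the $i$-th summands of the sum), one isolates the factor $q=k$ of $A$, equal to $n_k+1$, and the summand $B_k$, whose denominator $\alpha_k-\alpha_k-n_k+n_k-1$ equals $-1$; then $A\sim(n_k+1)\beta$ and $B_k\sim-n_k\beta$, so the leading parts add to $\beta$ and annihilate the $-\beta$, while for $i\neq k$ the denominator $\alpha_i-\alpha_k-n_k+n_i-1\sim(c_i-c_k)\sqrt{\beta/2}$ makes $B_i\sim n_i\sqrt{2\beta}/(c_i-c_k)$. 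The remaining task is to expand $A$, $B_k$ and $\sum_{i\neq k}B_i$ one order further, to $O(\sqrt\beta)$; using $\alpha_k+n_k+1\sim\beta+(c_k/\sqrt2)\sqrt\beta$ and the first-order expansion $\prod_{q\neq k}\bigl(1+\tfrac{n_q}{(c_k-c_q)\sqrt{\beta/2}+O(1)}\bigr)=1+\tfrac{\sqrt2}{\sqrt\beta}\sum_{q\neq k}\tfrac{n_q}{c_k-c_q}+O(1/\beta)$, one checks that the $O(\sqrt\beta)$ contributions carrying $\sum_{q\neq k}\tfrac{n_q}{c_k-c_q}$ coming from $A$ and $B_k$ combine into a single copy, which then cancels the one from $\sum_{i\neq k}B_i$ by the antisymmetry $\tfrac1{c_k-c_q}+\tfrac1{c_q-c_k}=0$, leaving $b^0_{\vec{n}}(k)=\beta+(c_k/\sqrt2)\sqrt\beta+O(1)$. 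Dividing by $\sqrt{2\beta}$ and letting $\beta\to\infty$ yields $c_k/2$.

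The main obstacle is precisely this last step: one must carry the asymptotic expansions of the first term and of the $i=k$ summand of \eqref{LaguerreFKRecurrence} to subleading order, where several groups of $O(\sqrt\beta)$ contributions conspire to vanish through the partial-fraction identity $\tfrac1{c_k-c_q}+\tfrac1{c_q-c_k}=0$; the cases $j\geq1$ and the reduction to the limits above are routine. One may, alternatively, run the whole argument from the Jacobi--Piñeiro coefficients \eqref{JPRecurrence} and the limit \eqref{HermiteasJPLimitTypeI}, with $\alpha_i\mapsto\beta+c_i\sqrt{\beta}$ and $x\mapsto(x+\sqrt\beta)/(2\sqrt\beta)$; the required bookkeeping is entirely analogous.
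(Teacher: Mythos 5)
Your proposal is correct and follows essentially the same strategy as the paper: transfer the nearest-neighbour recurrence through the limit relations, read off the Hermite coefficients as $\lim(b^0-\beta)/\sqrt{2\beta}$ and $\lim b^j/(\sqrt{2\beta})^{\,j+1}$ (the paper records exactly these Laguerre-first-kind limits alongside the Jacobi--Piñeiro ones), handle $j\geq 1$ by direct asymptotics, and resolve the delicate $j=0$ cancellation at order $\sqrt{\beta}$. The only difference is that you carry out the $j=0$ expansion via the Laguerre-first-kind coefficients, the route the paper states is ``entirely analogous'' to its detailed Jacobi--Piñeiro computation, and your cancellation bookkeeping there is accurate.
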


\begin{proof}
Both limits \eqref{HermiteasJPLimitTypeI} and \eqref{HermiteasJPLimitTypeII} imply that the Hermite coefficients can be obtained from the Jacobi–Piñeiro coefficients \eqref{JPRecurrence} through the limit:
\begin{align*}
\begin{aligned}
	 &\lim_{\beta\rightarrow\infty}\sqrt{\beta}\left(2\,b^0_{\vec{n}}\left(k;\beta+c_1\sqrt{\beta},\dots,\beta+c_p\sqrt{\beta},\beta\right)-{1}\right),\\
&\lim_{\beta\rightarrow\infty}\left(2\sqrt{\beta}\right)^{j+1}b^j_{\vec{n}}\left(\beta+c_1\sqrt{\beta},\dots,\beta+c_p\sqrt{\beta},\beta\right),&j&\in\{1,\dots,p\}.
\end{aligned}
\end{align*}
On the other hand, limits \eqref{HermiteasLaguerreFKLimitTypeII} and \eqref{HermiteasLaguerreFKLimitTypeI} imply that the Hermite coefficients can be obtained from the Laguerre of the first kind coefficients \eqref{LaguerreFKRecurrence} through the limit:
\begin{align*}
\begin{aligned}
	 &\lim_{\beta\rightarrow\infty}\dfrac{1}{\sqrt{2\beta}}\left(b^0_{\vec{n}}\left(k;\beta+c_1\sqrt{\dfrac{\beta}{2}},\dots,\beta+c_p\sqrt{\dfrac{\beta}{2}}\right)-\beta\right),\\
&\lim_{\beta\rightarrow\infty}\dfrac{1}{\left(\sqrt{2\beta}\right)^{j+1}} b^j_{\vec{n}}\left(\beta+c_1\sqrt{\dfrac{\beta}{2}},\dots,\beta+c_p\sqrt{\dfrac{\beta}{2}}\right),&j&\in\{1,\dots,p\}.
\end{aligned}
\end{align*}

In both cases, the limit for \( j \in \{1,\dots,p\} \) is straightforward to apply. Let's consider the Jacobi–Piñeiro limit for the case \( j = 0 \); the reasoning for the Laguerre limit is entirely analogous. Here, we observe that the recurrence coefficients \eqref{JPRecurrence} remain unchanged after the variable changes
\begin{align*}
 &	
 \begin{multlined}[t][.7\textwidth]
 2\,b^0_{\vec{n}}\left(k;\beta+c_1\sqrt{\beta},\dots,\beta+c_p\sqrt{\beta},\beta\right)
 ={(n_k+1)\dfrac{2\beta+2c_k\sqrt{\beta}+2n_k+2}{2\beta+c_k\sqrt{\beta}+n_k+|\vec{n}|+2}}\prod_{i=1,i\neq k}^p\dfrac{(c_k-c_i)\sqrt{\beta}+n_k+1}{(c_k-c_i)\sqrt{\beta}+n_k+1-n_i}\\
-{n_k\dfrac{2\beta+2c_k\sqrt{\beta}+2n_k}{2\beta+c_k\sqrt{\beta}+n_k+|\vec{n}|}}\prod_{i=1,i\neq k}^p\dfrac{(c_k-c_i)\sqrt{\beta}+n_k}{(c_k-c_i)\sqrt{\beta}+n_k-n_i}\\
+\sum_{i=1,i\neq k}^p n_i\,\dfrac{2\left(\beta+c_i\sqrt{\beta}+n_i\right)\left(2\beta+c_k\sqrt{\beta}+n_k+|\vec{n}|+1\right)}{\left(2\beta+c_i\sqrt{\beta}+n_i+|\vec{n}|\right)_2\left(( c_i-c_k)\sqrt{\beta}-n_k+n_i-1\right)}
\times\prod_{q=1,q\neq i}^p\dfrac{(c_i-c_q)\sqrt{\beta}+n_i}{(c_i-c_q)\sqrt{\beta}+n_i-n_q}
\end{multlined}\\
&\hspace{1cm}
 =
\begin{multlined}[t][.7\textwidth](n_k+1)\left(1+\dfrac{c_k\sqrt{\beta}+n_k-|\vec{n}|}{2\beta+c_k\sqrt{\beta}+n_k+|\vec{n}|+2}\right)\prod_{i=1,i\neq k}^p\left(1+\dfrac{n_i}{(c_k-c_i)\sqrt{\beta}+n_k+1-n_i}\right)\\
-{n_k\left(1+\dfrac{c_k\sqrt{\beta}+n_k-|\vec{n}|}{2\beta+c_k\sqrt{\beta}+n_k+|\vec{n}|}\right)}\prod_{i=1,i\neq k}^p\left(1+\dfrac{n_i}{(c_k-c_i)\sqrt{\beta}+n_k-n_i}\right)\\
+\sum_{i=1,i\neq k}^p n_i\,\dfrac{2\left(\beta+c_i\sqrt{\beta}+n_i\right)\left(2\beta+c_k\sqrt{\beta}+n_k+|\vec{n}|+1\right)}{\left(2\beta+c_i\sqrt{\beta}+n_i+|\vec{n}|\right)_2\left(( c_i-c_k)\sqrt{\beta}-n_k+n_i-1\right)}
\times\prod_{q=1,q\neq i}^p\dfrac{(c_i-c_q)\sqrt{\beta}+n_i}{(c_i-c_q)\sqrt{\beta}+n_i-n_q}.
\end{multlined}
\end{align*}

The components within the sum behave as follows 
when
 \( \beta \rightarrow \infty \):
\begin{align*}
\dfrac{n_i}{(c_i-c_k)\sqrt{\beta}},
\end{align*}
while the other two summands behave as follows for \( \beta \rightarrow \infty \), respectively
\begin{align*}
 &{(n_k+1)}
 \left(1+\dfrac{c_k}{2}\dfrac{1}{\sqrt{\beta}}-\dfrac{1}{\sqrt{\beta}}\sum_{i=1,i\neq k}^p\dfrac{n_i}{c_i-c_k}\right)+O\left(\dfrac{1}{\beta}\right), &
&n_k \left(1+\dfrac{c_k}{2}\dfrac{1}{\sqrt{\beta}}-\dfrac{1}{\sqrt{\beta}}\sum_{i=1,i\neq k}^p\dfrac{n_i}{c_i-c_k}\right)+O\left(\dfrac{1}{\beta}\right).
\end{align*}
Combining all the summands and simplifying, we find that the entire expression behaves as follows for \( \beta \rightarrow \infty \):
\begin{align*}
1+\dfrac{c_k}{2}\dfrac{1}{\sqrt{\beta}}
+O\left(\dfrac{1}{\beta}\right).
\end{align*}
The limit now becomes straightforward to apply, yielding \eqref{LaguerreSKRecurrence}.
\end{proof}

\begin{rem}
	An instance of Theorem \ref{teo:Hermite_recurrence} can be observed in \cite[Theorem 5]{ContinuosII} (also for $p$ weights) where a unit shift is applied to $n_1$. Consequently, there is no general permutation or arbitrary index involved. Moreover, the proof provided in \cite{ContinuosII} is solely for two weights.
\end{rem}

In the stepline, this expression transforms to:
\begin{cor}[Step line multiple Hermite recurrence coefficients]
The Hermite type I \eqref{HermiteTypeI} and type II \eqref{HermiteTypeII} multiple orthogonal polynomials in the step line adhere to a recurrence relation of the form \eqref{StepLineRecurrence} with respect to the coefficients
\begin{align*}
\begin{aligned}
	 b^0_{pm+k}&=\dfrac{c_{k+1}}{2}\\
b^j_{pm+k}&=\dfrac{1}{2^j}\sum_{i=1}^{k+1-j}(m+1)\prod_{q=k+2-j}^k{(c_i-c_q)}
+\dfrac{1}{2^j}\sum_{i=k+1}^{p}m\prod_{q=k+2-j}^k{(c_i-c_q)},&j&\in\{1,\dots,k\},\\
b^j_{pm+k}&=\dfrac{1}{2^j}\sum_{i=k+1}^{p+k+1-j}m\prod_{q=1}^k{(c_i-c_q)}\prod_{q=p+k+2-j}^p{(c_i-c_q)}, &j&\in\{k+1,\dots,p\}.
\end{aligned}
\end{align*}
\end{cor}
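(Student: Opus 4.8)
The plan is to specialize Theorem~\ref{teo:Hermite_recurrence} to the step-line multi-index
\begin{align*}
 \vec{n}=(\underbrace{m+1,\dots,m+1}_{k},\underbrace{m,\dots,m}_{p-k}),\qquad |\vec{n}|=pm+k,
\end{align*}
using the relabelling of polynomials and coefficients introduced just before \eqref{StepLineRecurrence}. The first point to settle is which permutation $\pi$ governs the step line: it is the one recording the order in which the entries of $\vec{n}$ are lowered along the step-line sequence, so that descending from $pm+k$ one first subtracts a unit from the $k$-th entry, then the $(k-1)$-th, and so on down to the first, and afterwards from the $p$-th, the $(p-1)$-th, etc. This gives $\pi(j)=k+1-j$ for $j\in\{1,\dots,k\}$ and $\pi(j)=p+k+1-j$ for $j\in\{k+1,\dots,p\}$, hence $\pi^{-1}(i)=k+1-i$ for $i\leq k$ and $\pi^{-1}(i)=p+k+1-i$ for $i>k$.

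Next I would evaluate the three cases. For $j=0$ the coefficient $b^0_{\vec{n}}(k)$ in Theorem~\ref{teo:Hermite_recurrence} equals $c_k/2$ independently of $\vec{n}$, and the step-line convention evaluates it at the index $k+1$, so $b^0_{pm+k}=c_{k+1}/2$ at once. For $j\in\{1,\dots,k\}$ the membership test $j\leq\pi^{-1}(i)$ gives
\begin{align*}
 S(\pi,j)&=\{1,\dots,k+1-j\}\cup\{k+1,\dots,p\}, & S^{\textsf c}(\pi,j)&=\{k+2-j,\dots,k\},
\end{align*}
because $j\leq k<p+k+1-i$ holds for every $i\in\{k+1,\dots,p\}$, so all the large indices survive; and for $j\in\{k+1,\dots,p\}$ one gets
\begin{align*}
 S(\pi,j)&=\{k+1,\dots,p+k+1-j\}, & S^{\textsf c}(\pi,j)&=\{1,\dots,k\}\cup\{p+k+2-j,\dots,p\},
\end{align*}
because $j\geq k+1$ rules out $j\leq k+1-i$ for every $i\leq k$, so the small indices all drop out. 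Substituting these sets and the values $n_i=m+1$ for $i\leq k$, $n_i=m$ for $i>k$ into $b^j_{\vec{n}}=2^{-j}\sum_{i\in S(\pi,j)}n_i\prod_{q\in S^{\textsf c}(\pi,j)}(c_i-c_q)$ reproduces the three displayed formulas.

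The only real content is the combinatorial bookkeeping of the permutation and of the two sets $S(\pi,j),S^{\textsf c}(\pi,j)$; once these are pinned down the rest is direct substitution, and no analytic input beyond Theorem~\ref{teo:Hermite_recurrence} is needed. The assertion for type I and for type II follows simultaneously, since that theorem already covers both families. I expect the main (mild) obstacle to be making the identification of $\pi$ with the step-line convention airtight, in particular checking that the relabelled $b^j_{pm+k}$ in \eqref{StepLineRecurrence} really corresponds to taking $\vec{n}-\vec{s}_{j-1}$ with this $\pi$.
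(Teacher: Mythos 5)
Your proposal is correct and is essentially the argument the paper intends: the corollary is a direct specialization of Theorem \ref{teo:Hermite_recurrence} to the step-line multi-index, with the permutation $\pi(j)=k+1-j$ for $j\leq k$ and $\pi(j)=p+k+1-j$ for $j>k$ so that $\vec{n}-\vec{s}_j$ stays on the step line, and your computation of $S(\pi,j)$ and $S^{\textsf c}(\pi,j)$ matches the bookkeeping the authors perform (it appears explicitly in a commented-out block of the source for the Jacobi--Piñeiro case). No gaps; the identification $b^0_{pm+k}=b^0_{\vec n}(k+1)=c_{k+1}/2$ and the two ranges of $j$ are handled exactly as needed.
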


\section*{Conclusions and outlook}

In this paper, we present novel findings on explicit expressions for type I multiple Laguerre of the second kind orthogonal polynomials for an arbitrary number of weights. These expressions are represented in terms of multiple Kampé de Fériet series. Additionally, we derive explicit expressions for type I multiple orthogonal polynomials of multiple Hermite families with an arbitrary number of weights. Furthermore, we provide explicit expressions for nearest-neighbor and step line recursion coefficients for Jacobi–Piñeiro, Laguerre of the first and second kinds, and multiple Hermite polynomials. 

In future research, a pivotal goal lies in uncovering analogous hypergeometric expressions employing multiple Kampé de Fériet functions for Hahn multiple orthogonal polynomials with $p$ weights, along with all their discrete descendants within the Askey scheme \cite{AskeyII}. Such discoveries bear profound significance, especially in applications where type I polynomials hold sway, as seen in Markov chains characterized by transition matrices featuring $p$ subdiagonals. These findings open up fresh avenues for delving into the spectral properties of Markov chains endowed with $p$ banded Hessenberg transition matrices. The bidiagonal factorization discussed in \cite{bidiagonal} initially presented for the case of $p=2$ warrants extension to encompass scenarios where $p\geq 2$.

\section*{Acknowledgments}

AB acknowledges Centre for Mathematics of the University of Coimbra 
(funded by the Portuguese Government through FCT/MCTES, DOI: 10.54499/UIDB/00324/2020).

JEFD and AF
acknowledges the CIDMA Center for Research and Development in Mathematics and Applications
(University of Aveiro) and the Portuguese Foundation for Science and Technology (FCT) for their support within
projects, DOI: 10.54499/UIDB/04106/2020 \& DOI: 10.54499/UIDP/04106/2020; additionally, JEFD acknowledges the PhD contract DOI: 10.54499/UI/BD/152576/2022 from FCT Portugal.

MM acknowledges Spanish ``Agencia Estatal de Investigación'' research project [PID2021- 122154NB-I00], \emph{Ortogonalidad y Aproximación con Aplicaciones en Machine Learning y Teoría de la Probabilidad}.

\section*{Declarations}

\begin{enumerate}
	\item \textbf{Conflict of interest:} The authors declare no conflict of interest.
	\item \textbf{Ethical approval:} Not applicable.
	\item \textbf{Contributions:} All the authors have contribute equally.
\end{enumerate}



\begin{thebibliography}{99}
	
	\bibitem{afm}
	C. Álvarez-Fernández, U. Fidalgo Prieto, and M. Mañas,
	\emph{Multiple orthogonal polynomials of mixed type: Gauss--Borel factorization and the multi-component 2D Toda hierarchy}, Advances in Mathematics \textbf{227} (2011), 1451–1525. 
	
	\bibitem{andrews}
	G. E. Andrews, R. Askey, and R. Roy,
 \emph{Special Functions},
	Cambridge University Press, Cambridge, 1999.
	
	\bibitem{ContinuosII}
	A. I. Aptekarev, A. Branquinho, and W. Van Assche,
	\emph{Multiple orthogonal polynomials for classical weights}, Transactions of the American Mathematical Society \textbf{355} (10) (2003) 3887--3914.
	
	\bibitem{Arvesu}
	J. Arvesu, J. Coussement, and W. Van Assche,
	\emph{Some discrete multiple orthogonal polynomials},
	Journal of Computational and Applied Mathematics \textbf{153} (2003) 19--45.
	
	\bibitem{AskeyII}
	B. Beckerman, J. Coussement, and W. Van Assche,
	\emph{Multiple Wilson and Jacobi--Piñeiro polynomials},
	Journal of Approximation Theory \textbf{132} (2005) 155--181.
	
	\bibitem{HahnI}
	A. Branquinho, J. E. F. Díaz, A. Foulquié-Moreno, and M. Mañas,
	\emph{Hahn multiple orthogonal polynomials of type I: Hypergeometric expressions},
	Journal of Mathematical Analysis and Applications \textbf{528} (2023), 1277471.
	
\bibitem{bidiagonal}
A. Branquinho, J. E. F. Díaz, A. Foulquié-Moreno, and M. Mañas,
 \emph{Bidiagonal factorization of the recurrence matrix for the Hahn multiple orthogonal polynomials}, Linear Algebra and its Applications doi: \href{https://doi.org/10.1016/j.laa.2024.03.033}{10.1016/j.laa.2024.03.033}.

\bibitem{JPpmedidas}
A. Branquinho, J. E. F. Díaz, A. Foulquié-Moreno, and M. Mañas,
 \emph{Hypergeometric expressions for type I Jacobi-Piñeiro orthogonal polynomials with arbitrary number of weights}, to appear in Proceedings of the AMS. \hyperref{https://arxiv.org/abs/2310.18294}{}{}{arXiv:2310.18294v1}. 

	
	\bibitem{CRM} 
	A. Branquinho, J. E. F. Díaz, A. Foulquié-Moreno, and M. Mañas,  \emph{Markov Chains and Multiple Orthogonality,} To appear in ``Orthogonal Polynomials, Special Functions and Applications -- Proceedings of the 16th International Symposium, Montreal, Canada, In honor to Richard Askey'' in the CRM Series in Mathematical Physics, 2023. .
	
	\bibitem{finite} 
	A. Branquinho, J. E. F. Díaz, A Foulquié-Moreno, and Manuel Mañas,
	\emph{Finite Markov chains and multiple orthogonal polynomials}, 2023. \hyperref{https://arxiv.org/abs/2308.00182}{}{}{arXiv:2308.00182}. 
	
	\bibitem{hypergeometric} 
	A. Branquinho, J. E. F. Díaz, A. Foulquié-Moreno, and M. Mañas,
	\emph{Hypergeometric multiple orthogonal polynomials and random walks}, 2021. 
	\hyperref{https://arxiv.org/abs/2107.00770 }{}{}{arXiv:2107.00770 }.
	
	\bibitem{JP}
	A. Branquinho, J. E. F. Díaz, A. Foulquié-Moreno, M. Mañas, and C. Álvarez-Fernández,
	\emph{Jacobi--Piñeiro Markov chains}, to appear in Revista de la Real Academia de Ciencias Exactas, Físicas y Naturales. Serie A. Matemáticas \textbf{118} (15) (2024).
	
	\bibitem{aim} 
	A. Branquinho, A. Foulquié-Moreno, and M. Mañas, 
	\emph{Spectral theory for bounded banded matrices with positive bidiagonal factorization and mixed multiple orthogonal polynomials}, Advances in Mathematics \textbf{434} (2023) 109313. 
	
	\bibitem{laa} 
	A. Branquinho, A. Foulquié-Moreno, and M. Mañas, 
	\emph{Positive bidiagonal factorization of tetradiagonal Hessenberg matrices}, Linear Algebra and its Applications \textbf{667} (2023) 132--160. 
	
	\bibitem{phys-scrip} 
	A. Branquinho, A. Foulquié-Moreno, and M. Mañas, 
	\emph{Oscillatory banded Hessenberg matrices, multiple orthogonal polynomials and random walks}, Physica Scripta \textbf{98} (2023) 105223. 
	
	\bibitem{Contemporary}
	A. Branquinho, A. Foulquié-Moreno, and M. Mañas,
  \emph{Banded matrices and their orthogonality}, to appear in Recent Progress in Special Functions, Galina Filipuk, editor. AMS Contemporary Mathematics, 2024.
	
	\bibitem{Ismail}
	M. E. H. Ismail,
	\emph{Classical and Quantum Orthogonal Polynomials in One Variable},
	Cambridge University Press, Cambridge, 2005.
	
	
	
	
	
	
	
	
	\bibitem{nikishin_sorokin}
	E. M. Nikishin and V. N. Sorokin,
	\emph{Rational Approximations and Orthogonality},
	Translations of Mathematical Monographs \textbf{92},
	American Mathematical Society, Providence, 1991.
	
	\bibitem{slater}
	L. J. Slater,
	\emph{Generalized Hypergeometric Functions},
	Cambridge University Press, Cambridge, 2008.
	
	\bibitem{Srivastava}
	 H. M. Srivastava and P W. Karlsson, 
	 \emph{Multiple Gaussian Hypergeometric Series}, Ellis Horwood Limited, John Wiley \& Sons,
	Chichester, 1985.
	
	\bibitem{Clasicos}
	W. Van Assche and E. Coussement,
	\emph{Some classical multiple orthogonal polynomials},
	Journal of Computational and Applied Mathematics {\textbf{127}} (2001) 317–347.
	
	
%
%
%








 






















\end{thebibliography}
\end{document}